\documentclass[reqno,11pt]{amsart}
\usepackage{amssymb}
\usepackage{mathrsfs}
\usepackage{txfonts}
\usepackage{amsfonts}
\usepackage{amstext}
\usepackage{amssymb}
\usepackage{amsmath}
\usepackage{graphicx}
\usepackage{hyperref}
\usepackage{url}
\usepackage{tikz}
\usepackage{xparse}

\makeatletter
\def\sim@x@scale{.15}
\def\sim@y@scale{.05}
\def\sim@y@thick{.02}
\newsavebox\sim@upper
\newsavebox\sim@lower
\NewDocumentCommand{\xSim}{ O{} m }{\TextOrMath{\PackageError{TEST}{`\string\xSim` is valid in math mode only.}{}}{
\sbox\sim@upper{$\scriptsize #2$}
\sbox\sim@lower{$\scriptsize #1$}
\pgfmathparse{min(max(\wd\sim@upper/1em, \wd\sim@lower/1em, 1.0), 1.5)}
\edef\sim@ratio{\pgfmathresult}
\def\sim@x {\sim@x@scale * \sim@ratio}
\def\sim@y {\sim@y@scale * \sim@ratio}
\def\sim@@y{\sim@y@thick * \sim@ratio}
\pgfmathparse{floor(max(\wd\sim@upper/1em, \wd\sim@lower/1em)) + 1}
\edef\sim@wd{\pgfmathresult em}
\mathrel{
\begin{tikzpicture}[baseline=-.7ex]
\filldraw[line width=.2pt] 
(0, 0)
.. controls +(\sim@x, \sim@y+\sim@@y) and +(-\sim@x, -\sim@y) .. 
+(\sim@wd, 0) 
node[midway, above] {\usebox\sim@upper} 
node[midway, below] {\usebox\sim@lower}
.. controls +(-\sim@x, -\sim@y-\sim@@y) and +(\sim@x, \sim@y) .. 
(0, 0);
\end{tikzpicture}
}
}}
\makeatother
\hypersetup{
    colorlinks  = true,
    citecolor  = blue,
    linkcolor  = blue,
    urlcolor   = blue
}
\allowdisplaybreaks
\newtheorem{theorem}{Theorem}[section]
\newtheorem{proposition}[theorem]{Proposition}
\newtheorem{lemma}[theorem]{Lemma}
\newtheorem{corollary}[theorem]{Corollary}
\newtheorem{remark}[theorem]{Remark}

\newtheorem{definition}[theorem]{Definition}

\newtheorem{example}[theorem]{Example}

\renewcommand{\theequation}{\thesection.\arabic{equation}}
\newenvironment{notation}{\smallskip{\sc Notation.}\rm}{\smallskip}
\newenvironment{acknowledgement}{\smallskip{\sc Acknowledgement.}\rm}{\smallskip}

\DeclareMathOperator*{\esup}{esup}

\numberwithin{equation}{section}
\newcounter{counterConstant}

\makeatletter

\newcommand{\Rmnum}[1]{\expandafter\@slowromancap\romannumeral #1@}
\@namedef{subjclassname@2020}{\textup{2020} Mathematics Subject Classification}
\makeatother
\input{tcilatex}

\makeatletter
\@namedef{subjclassname@2020}{%
  \textup{2020} Mathematics Subject Classification}
\makeatother

\def\Xint#1{\mathchoice
{\XXint\displaystyle\textstyle{#1}}%
{\XXint\textstyle\scriptstyle{#1}}%
{\XXint\scriptstyle\scriptscriptstyle{#1}}%
{\XXint\scriptscriptstyle\scriptscriptstyle{#1}}%
\!\int}
\def\XXint#1#2#3{{\setbox0=\hbox{$#1{#2#3}{\int}$ }
\vcenter{\hbox{$#2#3$ }}\kern-.6\wd0}}

\def\oint{\Xint-}

\def\FRAME#1#2#3#4#5#6#7#8
{
 \begin{figure}[H]
 \begin{center}
 \includegraphics[height=#3]{#7}
 \caption{#5}
 \label{#6}
 \end{center}
 \end{figure}
}

\def\enddoc{\end{document}}

\begin{document}
\title[On the well-posedness of PMEs on general metric measure spaces]{On
the well-posedness of porous medium equations on general metric measure
spaces}
\author[Chang]{Diwen Chang}
\address{School of Mathematics and Statistics, Beijing Technology and
Business University, Beijing, 102488, China.}
\email{dwchang@btbu.edu.cn}
\thanks{Diwen Chang is supported by the Research Foundation for Youth Scholars of Beijing Technology and Business University, China \textbf{(No. RFYS2025)}.}

\begin{abstract}
	On general metric measure spaces, we develop a new well-posedness theory for the signed porous medium equation and its fast diffusion counterpart 
	\[
	\partial_t u = \mathcal{L}\left(|u|^{m-1}u\right), \qquad m>0,
	\]
	where $\mathcal{L}$ is the associated non-positive self-adjoint operator of a symmetric Dirichlet form. The theory does not rely on a Gelfand triple or compact embeddings; instead, it is built upon the extended Dirichlet space $\mathcal{F}_e$ and auxiliary spaces $V^q:=L^q\cap\mathcal{F}_e$, whose uniform convexity plays a key role in the proof. The proof uses only the existence of the Dirichlet form and its extension; no additional regularity of the form or geometric assumptions on the underlying space are needed. Consequently, the results apply to a wide range of metric measure spaces, including non-smooth fractals.
\end{abstract}

\subjclass[2020]{Primary: 35K15; Secondary: 47H05, 46N20, 28A80.}

\keywords{porous medium equations, Rothe method, monotone operators, Minty--Browder theorem, fractals}

\maketitle
\tableofcontents

\section{Introduction}
\label{sec:Intro} 

Let $(M,d)$ be a locally compact, separable metric space equipped with a Radon measure $\mu$ of full support (i.e., $\operatorname{supp}\mu=M$). 
The triple $(M,d,\mu)$ is called a \emph{metric measure space}.

In this article, we study the following quasi-linear evolution equation
\begin{equation}
	\partial_t u = \mathcal{L}\left( |u|^{m-1}u \right), \qquad m>0, \label{eq:1}
\end{equation}
and the well-posedness of associated Cauchy problem
\begin{equation}
	\left\{
	\begin{aligned}
		&\partial_t u = \mathcal{L}\left( |u|^{m-1}u \right), \quad &&(t,x)\in(0,T)\times M,\\
		&u(0,\cdot) = u_0, \quad &&x\in M,
	\end{aligned}
	\right. \label{eq:cauchy}
\end{equation}
on a metric measure space $(M,d,\mu)$, where $u_{0}$ is a measurable function, and $\mathcal{L}$ is a non-positive self-adjoint operator associated with a Dirichlet form $(\mathcal{E},\mathcal{F})$ on $L^{2}(M,\mu)$ (cf. \cite{FukushimaOshimaTakeda.2011.489}). Concrete realizations of $\mathcal{L}$ include the Laplace--Beltrami operator on Riemannian manifolds and the fractional Laplacian on Euclidean spaces. 

Equation \eqref{eq:1}—known as the porous medium equation (PME) when $m>1$, and the fast diffusion equation (FDE) when $0<m<1$ (including their fractional counterparts)—has been extensively studied in Euclidean domains and on Riemannian manifolds. Yet, on non-smooth metric measure spaces such as fractals, a well-posedness theory without imposing \(L^2\)-regularity on the nonlinearity remains unavailable. This is precisely the primary motivation for the present work.


The literature on the PME and FDE is vast. In Euclidean domains, the PME and FDE have been systematically treated in the monograph \cite{VazquezPMEBook} and the references therein; their fractional counterparts were introduced in the pioneering works \cite{PabloQRVazquez2011Adv,PabloQRVazquez2012CPAM}. On Riemannian manifolds, geometric properties such as curvature play a prominent role. On manifolds with non-positive sectional curvature, such as hyperbolic spaces and Cartan--Hadamard manifolds, well-posedness and asymptotic behaviour for broad classes of initial data have been addressed in a substantial body of work (see, e.g., \cite{BonforteGrilloVazquez2008,GrilloMuratori2014,GrilloMuratori2016,GrilloMuratoriPunzo2018,GrilloMuratoriVazquez2017Adv,GrilloMuratoriVazquez2019,Vazquez2015JMPA}). The fractional PME on manifolds was recently studied via a potential method tailored to non-positively curved settings \cite{BerchioBonforteGrilloMuratori2024}. This strategy was later adapted to the PME on manifolds with nonnegative Ricci curvature, employing instead a Green function technique \cite{GrilloMonticelliPunzo2025JDE}. More recently, the well-posedness of the Cauchy problem for the Leibenson equation (which reduces to the PME or FDE when $p=2$) was established on arbitrary geodesically complete Riemannian manifolds, without imposing any curvature restriction \cite{Suerig2026}.

The above discussion concerns smooth Riemannian manifolds. On non-smooth metric measure spaces such as fractals, however, the very definition of a Laplacian is already a non-trivial first step, since these spaces lack a differential structure. Analysis on fractals originated with the probabilistic construction of Brownian motion on the Sierpi\'{n}ski gasket \cite{BarlowPerkins.1988.PTRF543} and the Sierpi\'{n}ski carpet \cite{BarlowBass.1989.AIHPPS225}. Subsequently, based on graph approximations, an analytic framework for defining a Laplacian on a wide class of fractals was developed \cite{Kigami.1993.TAMS721}. All these constructions---whether probabilistic or analytic---are unified within the theory of Dirichlet forms \cite{ChenFukushima.2012.479,FukushimaOshimaTakeda.2011.489}. In this framework, a given Dirichlet form is associated with a unique non-positive self-adjoint operator, which serves precisely as the Laplacian on the fractal.

Once the Laplacian is available, a wide range of PDEs has been studied on fractals and general metric measure spaces, including the heat equation and heat kernel estimates \cite{BarlowGrigoryanKumagai.2012.JMSJ1091,GrigoryanHuHu.2018.AM433,GrigoryanHu.2008.IM81,GrigoryanHu.2014.MMJ505,GrigoryanHuLau.2015.JMSJ1485,GrigoryanTelcs.2012.AP1212,Kumagai.1993.PTRF205}, the wave equation \cite{DalrympleStrichartzVinson.1999.JFAA203}, and semilinear PDEs \cite{Falconer.1999.CMP235,FalconerHu.1999.JMAA552,FalconerHu.2001.JMAA606,FalconerHuSun2012}. These works have profoundly revealed how fractal geometry, in particular fractal dimensions, affects the behaviour of solutions. A separate line of research, based on the abstract first-order calculus for Dirichlet forms developed in \cite{HinzRocknerTeplyaev2013}, has initiated the study of first-order PDEs—such as the viscous Burgers equation \cite{HinzMeinert2020} and the continuity/transport equation \cite{HinzSchefer2024}—on fractals.

Nevertheless, research on equation \eqref{eq:1} on general metric measure spaces remains limited, largely due to the inherent difficulty arising from the coupling between the power-law nonlinearity \(\Psi(u)=|u|^{m-1}u\) and the abstract operator \(\mathcal{L}\). In the stochastic setting, the works \cite{RocknerWuXie2018, RocknerWuXie2024} established a well-posedness theory for the more general equation \(dX(t)- \mathcal{L}(P(X(t)))dt\ni B(t,X(t))dW(t)\), where $B$ is a Hilbert-Schmidt operator-valued map and $W$ is a Wiener process. Their framework, however, requires the underlying Dirichlet form to be transient, and relies essentially on stochastic tools such as the \(L^p\)-It\^{o} formula, which impose additional regularity not needed in the purely deterministic setting.

On the deterministic side, the closest result is in \cite[Section 3]{AmbrosioMondinoSavare2019}, which also treats \(\partial_t u = \mathcal{L}(P(u))\) within a Gelfand triple framework. While general in scope, it is not tailored to the power case: \cite[Theorem 3.4]{AmbrosioMondinoSavare2019} requires \(P'\) to take values in a closed subinterval of \((0,\infty)\), which fails for \(\Psi(u)=|u|^{m-1}u\); and although \cite[Theorem 3.7]{AmbrosioMondinoSavare2019} does cover the power case, it does so only under finite total measure and \(L^\infty\) initial data. More fundamentally, the Gelfand triple structure forces \(P(u)\in L^2\), an \(L^2\)-regularity that is not required by the equation itself—the weak formulation of \eqref{eq:cauchy} only needs finiteness of the energy of \(\Psi(u)\). This is not a limitation of \cite{AmbrosioMondinoSavare2019} per se, but rather a natural consequence of the broader scope for which their framework is designed. For the specific power-law equation \eqref{eq:cauchy}, however, such a structure imposes unnecessary restrictions.

In view of the above, we develop a new well-posedness theory for \eqref{eq:cauchy} in which the function spaces are dictated by the natural energy estimates of the equation, namely $u\in L^{m+1}$ and the finiteness of the energy of $|u|^{m-1}u$, rather than by a pre-fixed Hilbert structure. 

Our main novelties are twofold. 
\begin{itemize}
	\item First, to avoid the structural constraints of the Gelfand triple—which inherently imposes an \(L^2\)-regularity on the nonlinearity \(P(u)\) that is not required by the equation—we replace the original domain \(\mathcal{F}\) by the extended Dirichlet space \(\mathcal{F}_e\). This allows us to introduce the spaces \(V^q := L^q \cap \mathcal{F}_e\), where the nonlinearity \(\Psi(u) = |u|^{m-1}u\) is naturally placed in \(\mathcal{F}_e\) with finite energy, but not necessarily in \(L^2\). These spaces are tailored precisely to the natural energy estimates \(u\in L^{m+1}\) and \(\Psi(u)\in\mathcal{F}_e\).
	
	\item Second, the use of \(\mathcal{F}_e\) comes at a cost: since \((\mathcal{F}_e,\sqrt{\mathcal{E}})\) need not be complete (cf. \S~\ref{sub:Fe}), reflexivity is not automatic for the intersection spaces \(V^q\). We prove that \((V^q,\|\cdot\|_{V^q})\) is complete and uniformly convex (hence reflexive) for every \(1<q<\infty\) (cf. Proposition~\ref{prop:Vq_uniformly_convex}). This non-trivial result is a key contribution of this work: it provides the reflexive Banach setting necessary for the application of the Minty--Browder theorem, entirely without relying on a Gelfand triple.
\end{itemize}

We now introduce the functional setting for our weak solutions. Set $
V := V^{1+1/m} = L^{1+1/m}(M,\mu) \cap \mathcal{F}_e$, and denote by \(V^*\) the dual space of $V$.

\begin{definition}[Weak solution of \eqref{eq:cauchy}]
	\label{def:weaksol}
	Let \(T>0\) and \(u_0\in L^{m+1}(M,\mu)\). A function
	\[
	u\in L^\infty(0,T;L^{m+1}(M,\mu)) \cap W^{1,2}(0,T;V^*)
	\]
	is called a \emph{weak solution} of the Cauchy problem \eqref{eq:cauchy} on \([0,T)\), if \(\Psi(u):=|u|^{m-1}u\in L^2(0,T;V)\) and
	\[
	-\int_0^T\int_M u\,\partial_t\varphi\,d\mu\,dt
	+ \int_0^T \mathcal{E}(\Psi(u)(t),\varphi(t))\,dt
	= \int_M u_0\,\varphi(0)\,d\mu
	\]
	for every test function \(\varphi\in C_c^1([0,T);V)\).
\end{definition}

\begin{remark}
	The weak formulation involves only the energy form \(\mathcal{E}\), not the pointwise action of \(\mathcal{L}\). Since \(\Psi(u)\) need not be \(L^2\)-integrable, we work in the extended Dirichlet space \(\mathcal{F}_e\), where \(\mathcal{E}\) is well-defined without requiring \(L^2\)-regularity. This is precisely why the space \(V\) is chosen as \(L^{1+1/m}\cap\mathcal{F}_e\): it matches the natural energy estimates \(u\in L^{m+1}\) and \(\Psi(u)\in\mathcal{F}_e\). The precise definitions of \(\mathcal{F}\) and \(\mathcal{F}_e\) are given in Section~\ref{Sec:DF}.
\end{remark}

Our main result is the following theorem.

\begin{theorem}
	\label{thm:main}
	Let \((M,d,\mu)\) be a metric measure space and let \((\mathcal{E},\mathcal{F})\) be a Dirichlet form on \(L^2(M,\mu)\) with associated non-positive self-adjoint operator \(\mathcal{L}\) (cf.\eqref{eq:Laplacian}). For every \(T>0\) and every \(u_0\in L^{m+1}(M,\mu)\), the following hold:
	
	\begin{enumerate}
		\item[(i)] (Existence and uniqueness.) There exists a unique weak solution \(u\) of \eqref{eq:cauchy} on $[0,T)$ in the sense of Definition~\ref{def:weaksol}, satisfying the energy estimates
		\begin{equation}\label{eq:bound}
			\|u\|_{L^\infty(0,T;L^{m+1})} \leq \|u_0\|_{L^{m+1}},
		\end{equation}
		and
		\begin{equation}\label{eq:boundv}
			\|\Psi(u)\|_{L^2(0,T;V)}
			\leq \sqrt{T\|u_0\|_{L^{m+1}}^{2m} + \frac{\|u_0\|_{L^{m+1}}^{m+1}}{m+1}}.
		\end{equation}
		
		\item[(ii)] (Comparison principle.) If \(\hat{u}_0,\tilde{u}_0\in L^{m+1}(M,\mu)\) satisfy \(\hat{u}_0\le \tilde{u}_0\) \(\mu\)-a.e., then the corresponding solutions satisfy \(\hat{u}(t,\cdot)\le \tilde{u}(t,\cdot)\) \(\mu\)-a.e. for almost every \(t\in(0,T)\). In particular, if \(u_0\ge 0\) \(\mu\)-a.e., then \(u(t,\cdot)\ge 0\) \(\mu\)-a.e. for almost every \(t\in(0,T)\).
		
		\item[(iii)] (Stability with respect to initial data.) Suppose that, there is a sequence \(\{u_{0,k}\}_{k\geq1}\subset L^{m+1}(M,\mu)\), satisfying
		\[
		\lim_{k\to\infty}\|u_{0,k}-u_0\|_{L^{m+1}(M,\mu)}=0.
		\]
		 For every $k\geq 1$, let $u^{(k)}$ be the weak solution of \eqref{eq:cauchy} on $[0,T)$ with initial datum $u_{0,k}$. Then, the corresponding solutions \(u^{(k)}\) satisfy
		\[
		\lim_{k\to\infty}\|u^{(k)}-u\|_{L^2(0,T;L^{m+1}(M,\mu))}=0.
		\]
	\end{enumerate}
\end{theorem}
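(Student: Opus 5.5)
The plan is Rothe's method (implicit time discretization). Each step is solved with the Minty--Browder theorem on the reflexive space $V=V^{1+1/m}$ from Proposition~\ref{prop:Vq_uniformly_convex}. Fix $N$, set $h=T/N$ and $u^0=u_0$. At step $n$ we look for $w^n=\Psi(u^n)\in V$ with
\[
\int_M \Psi^{-1}(w^n)\,\varphi\,d\mu + h\,\mathcal{E}(w^n,\varphi)=\int_M u^{n-1}\varphi\,d\mu,\qquad \varphi\in V,
\]
where $\Psi^{-1}(w)=|w|^{1/m-1}w$. The operator $A(w)=\Psi^{-1}(w)+h\mathcal{E}(w,\cdot)$ maps $V\to V^*$ because $\Psi^{-1}(w)\in L^{m+1}=(L^{1+1/m})^*$. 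It is monotone: $\Psi^{-1}$ is increasing and $\mathcal{E}\ge0$. It is hemicontinuous by dominated convergence. It is coercive since
\[
\langle A(w),w\rangle=\|w\|_{L^{1+1/m}}^{1+1/m}+h\,\mathcal{E}(w,w).
\]
The right-hand side lies in $V^*$ because $u^{n-1}\in L^{m+1}$. Minty--Browder then gives existence, and strict monotonicity of $\Psi^{-1}$ gives uniqueness of $w^n$. We then set $u^n=\Psi^{-1}(w^n)$.

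For the discrete estimates, test with $\varphi=w^n$. Convexity of $\Phi(u)=\frac{1}{m+1}|u|^{m+1}$, with $\Phi'=\Psi$, gives
\[
\int_M(u^n-u^{n-1})\,w^n\,d\mu\ \ge\ \int_M \Phi(u^n)\,d\mu-\int_M\Phi(u^{n-1})\,d\mu .
\]
Summing over $n$ yields $\|u^n\|_{L^{m+1}}\le\|u_0\|_{L^{m+1}}$ and $h\sum_n\mathcal{E}(w^n,w^n)\le \|u_0\|^{m+1}_{L^{m+1}}/(m+1)$. We also have $\|w^n\|_{L^{1+1/m}}=\|u^n\|^m_{L^{m+1}}\le \|u_0\|^m_{L^{m+1}}$, so the $L^2(0,T;L^{1+1/m})$ part of $\|w\|^2_{L^2(0,T;V)}$ is at most $T\|u_0\|^{2m}$. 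Combined with the energy sum, this gives exactly \eqref{eq:boundv}, assuming $\|\cdot\|_V^2=\|\cdot\|^2_{L^{1+1/m}}+\mathcal{E}$. The difference quotient satisfies $\|(u^n-u^{n-1})/h\|_{V^*}\le\sqrt{\mathcal{E}(w^n,w^n)}$, which bounds the piecewise linear interpolant $\hat u_N$ in $W^{1,2}(0,T;V^*)$.

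To pass to the limit, extract weak-$*$ limits: $\bar u_N\to u$ in $L^\infty(0,T;L^{m+1})$, $\bar w_N\rightharpoonup w$ in $L^2(0,T;V)$ (reflexive), and $\hat u_N\rightharpoonup u$ in $W^{1,2}(0,T;V^*)$. The linear weak formulation passes to the limit directly. The main obstacle is identifying $w=\Psi(u)$ without compactness. For this I would use Minty's trick for the maximal monotone operator $\Psi$ on $L^{1+1/m}\times L^{m+1}$ in space-time. It suffices to show
\[
\limsup_N\int_0^T\!\!\int_M \bar u_N\bar w_N\,d\mu\,dt\ \le\ \int_0^T\!\!\int_M u\,w\,d\mu\,dt .
\]
I would obtain this from the discrete energy identity: the left side is bounded by $\int\Phi(u_0)$ minus the energy terms, and weak lower semicontinuity of $\mathcal{E}$ on $\mathcal{F}_e$ handles those. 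The limit side requires the chain rule $\frac{d}{dt}\int_M\Phi(u)\,d\mu=\langle\partial_t u,w\rangle$ for the limit, which I would justify by Steklov averaging/convexity. The discrepancy between $\bar u_N$ and $\hat u_N$ vanishes because $\|\bar u_N-\hat u_N\|_{L^2(V^*)}\le h\|\partial_t\hat u_N\|_{L^2(V^*)}\to0$. Estimate \eqref{eq:bound} passes by lower semicontinuity.

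For uniqueness, comparison and stability, I would take two solutions and test the difference of their equations with $\varphi=T_\varepsilon^{+}(\Psi(\hat u)-\Psi(\tilde u))$, a truncation of the positive part that is admissible in $V$ because of the Markov property. Using $\mathcal{E}(f,T^+_\varepsilon f)\ge0$ and letting $\varepsilon\to0$ gives the $L^1$-contraction
\[
\|(\hat u(t)-\tilde u(t))^+\|_{L^1}\le\|(\hat u_0-\tilde u_0)^+\|_{L^1},
\]
which immediately gives (ii) and uniqueness. If $L^1$ is unavailable because $\mu(M)=\infty$, I would instead use the $V^*$-norm/duality argument, testing with $\Psi(\hat u)-\Psi(\tilde u)$ against the time integral of the difference. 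For (iii), the uniform bounds \eqref{eq:bound} and \eqref{eq:boundv} are uniform in $k$, so the passage-to-limit argument shows that subsequences of $u^{(k)}$ converge weakly to the unique solution $u$. To upgrade this to strong $L^2(0,T;L^{m+1})$ convergence, I would show norm convergence through the energy identity
\[
\int_M\Phi(u^{(k)}(t))\,d\mu+\int_0^t\mathcal{E}(\Psi(u^{(k)}),\Psi(u^{(k)}))\,ds=\int_M\Phi(u_{0,k})\,d\mu,
\]
and then apply uniform convexity of $L^{m+1}$ (Radon--Riesz). I expect this step to be delicate as well.
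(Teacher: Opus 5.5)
\paragraph{Gap in (i): identifying $w=\Psi(u)$.} Your existence argument is circular exactly where you locate the main obstacle.

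Since $\bar w_N=\Psi(\bar u_N)$, the quantity $\int_0^T\!\int_M\bar u_N\bar w_N\,d\mu\,dt=\int_0^T\|\bar u_N(t)\|_{L^{m+1}}^{m+1}\,dt$ is an $L^{m+1}$-energy. Your discrete upper bound for it is fine: it uses convexity of $\Phi$ plus weak lower semicontinuity of $w\mapsto\int_0^T(T-s)\,\mathcal{E}(w(s))\,ds$.

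The limit side is the problem. Because $\Phi'=\Psi$, any chain rule for $t\mapsto\int_M\Phi(u(t))\,d\mu$, whether by Steklov averages or by convexity, produces $\langle\partial_t u,\Psi(u)\rangle$, not $\langle\partial_t u,w\rangle$. Writing the latter already assumes $w=\Psi(u)$. Worse, at that stage you only know $\Psi(u)\in L^\infty(0,T;L^{1+1/m})$, not $\Psi(u)\in L^2(0,T;V)$. So $\langle\partial_t u,\Psi(u)\rangle_V$ is not even defined, and nothing yet links the two weak limits $u$ and $w$ pointwise.

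\paragraph{The missing idea.} As in the paper, compute $\int\!\int uw$ at the $V^*$ level, which needs no chain rule. Extend the weak formulation by density to test functions $\varphi\in W^{1,2}(0,T;V)$ with $\varphi(T)=0$, and take $\varphi(t)=\int_t^T w(s)\,ds$. Then $\partial_t\varphi=-w$, and by Fubini and the symmetry of $\mathcal{E}$,
\[
\int_0^T\mathcal{E}\bigl(w(t),\int_t^T w(s)\,ds\bigr)\,dt=\frac12\mathcal{E}\bigl(\int_0^T w\,ds\bigr).
\]
This gives, for the limit pair and with no relation between $u$ and $w$ assumed,
\[
\int_0^T\!\!\int_M u\,w\,d\mu\,dt=\int_M u_0\int_0^T w\,ds\,d\mu-\frac12\,\mathcal{E}\Bigl(\int_0^T w\,ds\Bigr).
\]
Testing the scheme with $\int_t^T\bar w_N\,ds$ gives the same expression for $\int\!\int\bar u_N\bar w_N$ up to an $O(N^{-1/2})$ error; an inequality suffices. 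Since $\int_0^T\bar w_N\,ds\rightharpoonup\int_0^T w\,ds$ in $V$, weak lower semicontinuity of $\mathcal{E}$ yields $\limsup_N\int\!\int\bar u_N\bar w_N\le\int\!\int uw$, and Minty's trick then closes the argument.

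The same test function also gives your fallback uniqueness argument. Applied to $\Psi(u^{(k)})-\Psi(u)$, it gives the estimate $\int\!\int(u^{(k)}-u)(\Psi(u^{(k)})-\Psi(u))\to0$ that drives (iii).

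\paragraph{Gap in (ii).} Your comparison argument rests on an unproved step. With the Kato-type test function $T^+_\varepsilon(\Psi(\hat u)-\Psi(\tilde u))$, the time term $\langle\partial_t(\hat u-\tilde u),T^+_\varepsilon(\Psi(\hat u)-\Psi(\tilde u))\rangle$ is not covered by any single-solution chain rule, because the test function is not a function of $\hat u-\tilde u$ alone. Passing to $\frac{d}{dt}\int_M(\hat u-\tilde u)_+\,d\mu$ with only $\partial_t u\in L^2(0,T;V^*)$ needs an Otto-type doubling-of-time argument, which you do not supply.

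A much shorter route uses tools you already have. Each Rothe step is order preserving: subtract the two discrete equations, test with $(\Psi(\hat u^n)-\Psi(\tilde u^n))_+\in V$, and use $\mathcal{E}(f,f_+)\ge0$, which follows from the Markov property. By induction $\hat u_N\le\tilde u_N$. This order survives weak limits, and uniqueness identifies the limits with the two solutions.
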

\begin{remark}
	\label{rem:infinite}
	The notion of weak solution extends naturally to the infinite time horizon \(T=\infty\). In this case, Definition~\ref{def:weaksol} is modified by replacing \(u\in W^{1,2}(0,T;V^*),\Psi(u)\in L^2(0,T;V)\) with \(u\in  W^{1,2}_{\mathrm{loc}}([0,\infty);V^*), \Psi(u)\in L^2_{\mathrm{loc}}(0,T;V)\), respectively.
	
	With this extension, except \eqref{eq:boundv}, the remaining part of Theorem~\ref{thm:main}(i),(ii) also holds for \(T=\infty\). Indeed, for \(0<T_1<T_2\), the restriction of the solution on \([0,T_2)\) to \([0,T_1)\) is, by Theorem~\ref{thm:main}(i), the solution on \([0,T_1)\). For each \(j\in\mathbb{N}\), let \(u^{(j)}\) denote the solution on \([0,j)\). Gluing these solutions by $u_\infty(t)|_{[0,j)}=u^{(j)}(t)$ gives a unique
	\[
	u_\infty\in L^\infty([0,\infty);L^{m+1})\cap W^{1,2}_{\mathrm{loc}}([0,\infty);V^*)
	\]
	satisfying the weak formulation on \([0,\infty)\), since every test function in \(C_c^1([0,\infty);V)\) has support contained in some finite interval \([0,j)\), and the \(L^\infty(0,T;L^{m+1}(M,\mu)) \)-bound follows from \eqref{eq:bound} in Theorem~\ref{thm:main}.
\end{remark}

The proof of existence in Theorem~\ref{thm:main}(i) is based on two classical tools: the Rothe method for time discretization and the Minty--Browder theorem for the elliptic subproblems (see, e.g., \cite{Roubíček2013} and the references therein for the general theory; for recent applications to evolution equations and variational inequalities, see \cite{BartoszChengKalita2015,PengZhaoLong2025,SlodickaVrabel2017, YinZeng2024}). The identification of the weak limit of the nonlinearity is achieved via the classical Minty trick (see, e.g., \cite[(a) in Page 474]{Zeidler1990IIB}), which avoids the need for compact embeddings, as in \cite{SlodickaVrabel2017}. Uniqueness, the comparison principle, and stability are proved in \S~\ref{sub:unicom} and \S~\ref{subsec:stability} using standard monotonicity arguments.

In addition to the novelties discussed above, the present framework offers several distinct advantages.

\begin{itemize}
	\item \textit{Minimal geometric assumptions.} We work under the standard assumptions stated at the outset of this section: local compactness, separability, and a full-support Radon measure. These ensure that the extended Dirichlet space $\mathcal{F}_e$ is well-defined (see \cite[p.~40, the discussion preceding Theorem 1.5.2]{FukushimaOshimaTakeda.2011.489}) and existence of the compact exhaustion of $M$, which is helpful to prove Theorem~\ref{thm:main}(ii). No additional geometric conditions are imposed. In particular, our framework covers both smooth Riemannian manifolds and non-smooth self-similar fractals within a unified theory.
	
	\item \textit{Weak assumptions on the Dirichlet form.} The theory requires only the existence of a Dirichlet form and its extended Dirichlet space; neither regularity nor locality of the form is needed. Consequently, our results apply simultaneously to local operators (e.g., Laplace--Beltrami), nonlocal operators (e.g., fractional Laplacians), and operators on fractal spaces, all within a single functional-analytic framework.
	
	\item \textit{Identification of nonlinear limits without compactness.} In the passage to the limit, we obtain weak limits of both the approximate solution and its nonlinearity. To identify the latter as the nonlinearity of the former, we employ the classical Minty trick, which uses the strict monotonicity of the power nonlinearity. This avoids any compactness assumption in the limiting argument.
\end{itemize}

The rest of this paper is organized as follows. In Section~\ref{Sec:DF}, we recall the basic properties of Dirichlet forms and their associated extended Dirichlet spaces. In Section~\ref{Sec:Vq}, we prove the completeness and reflexivity of $(V^q,\|\cdot\|_{V^q})$ for every $1<q<\infty$, and establish some auxiliary functional inequalities for later use. In Section~\ref{Sec:Epi}, we investigate the existence and uniqueness of the elliptic equation obtained by time discretization. The proof of our main theorem is completed in Section~\ref{Sec:Para} via the Rothe method and the Minty trick. Finally, we illustrate our main results with specific examples in Section~\ref{Sec:Outro}.

\begin{notation}
	We set
	\[
	\Psi(u):=|u|^{m-1}u,\qquad u\in\mathbb{R},
	\]
	and denote by $\Psi^{-1}$ its inverse, given by $\Psi^{-1}(v)=|v|^{1/m}\operatorname{sgn}(v)$ for $v\in\mathbb{R}$. Both $\Psi$ and $\Psi^{-1}$ are continuous, strictly increasing, and satisfy $\Psi(0)=\Psi^{-1}(0)=0$.
	
We write \(a\wedge b:=\min\{a,b\}\) and \(a\vee b:=\max\{a,b\}\) for \(a,b\in\mathbb{R}\). For a function \(f\), we set \(f_+:=f\vee 0\) and \(f_-:=(-f)\vee 0\).
	
	For $1\le q\le\infty$, we abbreviate $L^q(M,\mu)$ by $L^q$, and write $\|\cdot\|_{L^q}$ for its norm. For a general normed space $X$, we denote its norm by $\|\cdot\|_X$, and let $\langle\cdot,\cdot\rangle_X$ denote the duality pairing between $X^*$ and $X$.
	
	For $1\le q\le\infty$, we write $L^q(0,T;X)$ for the Bochner--Lebesgue space of $X$-valued functions on $[0,T]$. We also write $W^{1,2}(0,T;X)$ for the Sobolev--Bochner space of functions whose weak time derivative belongs to $L^2(0,T;X)$. We write $C_c^1([0,T);X)$ for the space of $X$-valued $C^1$-functions compactly supported in $[0,T)$. We follow the standard conventions for Bochner--Lebesgue spaces (see, e.g., \cite[\S 5.9.2]{Evans.2010.749}).
	
     For any symmetric bilinear form $\mathcal{B}$, we write $\mathcal{B}(u):=\mathcal{B}(u,u)$ whenever no confusion can arise.
\end{notation}

\section{Preliminaries on Dirichlet forms}
\label{Sec:DF}

In this section we recall the definition and some basic properties of Dirichlet forms and related extended Dirichlet spaces.

\subsection{Dirichlet forms}

We begin by recalling the definition of a Dirichlet form on $L^2(M,\mu)$.

\begin{definition}[Dirichlet form]
	\label{def:dirichlet}
	A symmetric non-negative bilinear form $(\mathcal{E},\mathcal{F})$ on $L^2(M,\mu)$ is called a \emph{Dirichlet form} if the following conditions hold:
	\begin{enumerate}
		\item $\mathcal{F}$, the domain of $\mathcal{E}$, is dense in $L^2(M,\mu)$;
		\item (Closeness) $\mathcal{F}$ is a Hilbert space with respect to the inner product
		\[
		(u,v)_{\mathcal{E}_1}:=(u,v)_{L^2}+\mathcal{E}(u,v),\qquad u,v\in\mathcal{F},
		\]
		with associated norm $\|u\|_{\mathcal{E}_1}:=\sqrt{(u,u)_{\mathcal{E}_1}}$;
		\item (Markovian property) for every $u\in\mathcal{F}$, the function $\bar u:=(u\vee 0)\wedge 1$ belongs to $\mathcal{F}$ and satisfies
		\[
		\mathcal{E}(\bar u)\le \mathcal{E}(u).
		\]
	\end{enumerate}
\end{definition}

Since a Dirichlet form is a closed symmetric form, the general correspondence between closed symmetric forms and non-positive self-adjoint operators applies: there exists a unique non-positive self-adjoint operator $\mathcal{L}$ on $L^2(M,\mu)$ such that
\begin{equation}\label{eq:Laplacian}
	\mathcal{F}=D\left(\sqrt{-\mathcal{L}}\right),\qquad
	\mathcal{E}(u,v)=\int_M \left(\sqrt{-\mathcal{L}}u\right)\left(\sqrt{-\mathcal{L}}v\right)\,d\mu,\qquad u,v\in\mathcal{F}.
\end{equation}
Moreover, 
\begin{equation*}
	\int_M\left(-\mathcal{L}u\right)v\, d\mu=\mathcal{E}(u,v)\qquad u\in D(\mathcal{L}), v\in\mathcal{F}.
\end{equation*}

For simplicity, we say that $\mathcal{L}$ is the \emph{Laplacian} associated with $(\mathcal{E},\mathcal{F})$. This operator will be central to our study, as the equation $\partial_t u=\mathcal{L}\left(|u|^{m-1}u\right)$ is precisely the evolution equation associated with this associated Laplacian.

Before presenting concrete examples, we recall the notions of regularity and locality for Dirichlet forms. A Dirichlet form $(\mathcal{E},\mathcal{F})$ is called \emph{regular} if $\mathcal{F}\cap C_c(M)$ is dense in $\mathcal{F}$ with respect to the $\mathcal{E}_1$-norm and in $C_c(M)$ with respect to the uniform norm; it is called \emph{local} if $\mathcal{E}(u,v)=0$ whenever $u,v\in\mathcal{F}$ have disjoint compact supports (see, e.g., \cite{FukushimaOshimaTakeda.2011.489}).

We emphasize that neither regularity nor locality is required for our main results; they are mentioned only for completeness in the examples.

The following examples illustrate the scope of the above framework.

\begin{example}
	\label{ex:dirichlet}
	The following three classes of Dirichlet forms are particularly relevant to our framework.
	
	\begin{enumerate}
		\item[(i)] \textit{Euclidean Sobolev spaces.} On $M=\mathbb{R}^n$ with Lebesgue measure, let $\mathcal{F}=W^{1,2}(\mathbb{R}^n)$ and
		\[
		\mathcal{E}(u,v)=\int_{\mathbb{R}^n}\nabla u\cdot\nabla v\,dx.
		\]
		This is a regular local Dirichlet form, and its associated Laplacian is the classical Laplacian \(\mathcal{L}=\Delta\), where \(\Delta:=\sum_{i=1}^n \frac{\partial^2}{\partial x_i^2}\).
		
		\item[(ii)] \textit{Fractional Sobolev spaces.} For $0<\beta<2$, define
		\[
		\mathcal{E}_\beta(u,v)=\int_{\mathbb{R}^n}|\xi|^{\beta}\hat u(\xi)\overline{\hat v(\xi)}\,d\xi
		\]
		for $u,v\in L^2(\mathbb{R}^n)$ with domain $\mathcal{F}_\beta=\{u\in L^2(\mathbb{R}^n):\mathcal{E}_\beta(u,u)<\infty\}$, where $\hat u$ and $\hat v$ are the Fourier transforms of $u$ and $v$, respectively. Equivalently, for $u,v\in\mathcal{F}_\beta$,
		\[
		\mathcal{E}_\beta(u,v)=C_{n,\beta}\int_{\mathbb{R}^n}\int_{\mathbb{R}^n}\frac{(u(x)-u(y))(v(x)-v(y))}{|x-y|^{n+\beta}}\,dx\,dy,
		\]
		with
		\[
		C_{n,\beta}=\frac{2^{\beta-1}\beta\,\Gamma((n+\beta)/2)}{\pi^{n/2}\Gamma(1-\beta/2)},
		\]
		where $\Gamma$ denotes the Gamma function. The space $\mathcal{F}_\beta$ is exactly the fractional Sobolev space $W^{\beta/2,2}(\mathbb{R}^n)$.
		
		For every $0<\beta<2$, $(\mathcal{E}_\beta,\mathcal{F}_\beta)$ is a regular non-local Dirichlet form, and its associated Laplacian is the fractional Laplacian $-(-\Delta)^{\beta/2}$.
		
		\item[(iii)] \textit{Sobolev spaces on Riemannian manifolds.} Let $(M,g)$ be a geodesically complete Riemannian manifold with volume measure $\mu$, and denote by $(\cdot,\cdot)_g$ the Riemannian inner product induced by $g$. The quadratic form
		\[
		\mathcal{E}(u,v):=\int_M\left(\nabla u,\nabla v\right)_g\,d\mu,\qquad u,v\in C_c^\infty(M),
		\]
		is closable in $L^2(M,\mu)$, and its closure is a regular local Dirichlet form $(\mathcal{E},W^{1,2}(M))$. Its associated Laplacian is the Laplace--Beltrami operator $\Delta_g$ (see, e.g., \cite[Chapter 4 \& Theorem 11.5]{Grigoryan.2009.482}).
		
		\item[(iv)] \textit{Dirichlet forms on the Sierpi\'nski gasket and the Sierpi\'nski carpet.} Let $K$ be the Sierpi\'nski gasket in $\mathbb{R}^n$, and let $\mu$ be the Hausdorff measure on $K$. Kigami \cite{Kigami.1993.TAMS721} constructed a regular self-similar Dirichlet form on $L^2(K,\mu)$ by building a compatible sequence of graph energies on approximating graphs; this construction was later extended to general post-critically finite (p.c.f.) self-similar fractals \cite{Kigami.2001.226}. For non-p.c.f. fractals such as the Sierpi\'nski carpet, the construction of a regular self-similar Dirichlet form is more delicate; see, e.g., \cite{KusuokaYin.1992.PTRF169}.
	\end{enumerate}
\end{example}

\subsection{The extended Dirichlet space $\mathcal{F}_e$}
\label{sub:Fe}
While $(\mathcal{F},\|\cdot\|_{\mathcal{E}_1})$ is a Hilbert space by virtue of the closedness of the form, $\mathcal{F}$ is not necessarily complete with respect to the energy norm $\sqrt{\mathcal{E}}$ alone. This causes a technical obstruction in the limiting procedure of Sections~\ref{Sec:Epi} and~\ref{Sec:Para}, where $\mathcal{E}$-Cauchy sequences arise.

To accommodate $\mathcal{E}$-Cauchy sequences that may not converge within $\mathcal{F}$, we enlarge $\mathcal{F}$ to $\mathcal{F}_e$ as follows.

\begin{definition}[Extended Dirichlet space]
	\label{def:Fe}
	Let $\mathcal{F}_e$ be the set of all measurable functions $u$ for which there exists an $\mathcal{E}$-Cauchy sequence $\{u_k\}_{k\ge 1}\subset \mathcal{F}$ such that $u_k \to u$ $\mu$-a.e. We call such a sequence an \emph{approximating sequence} of $u$. For $u\in\mathcal{F}_e$, set
	\[
	\mathcal{E}(u,u):=\lim_{k\to\infty}\mathcal{E}(u_k,u_k),
	\]
	which is well-defined independently of the choice of the approximating sequence $\{u_k\}_{k\ge1}$ (see \cite[Theorem 1.5.2(i)]{FukushimaOshimaTakeda.2011.489}). The pair $(\mathcal{F}_e,\mathcal{E})$ is called the \emph{extended Dirichlet space} of $(\mathcal{E},\mathcal{F})$. In the sequel, we shall also refer to $\mathcal{F}_e$ itself as the extended Dirichlet space when no confusion arises.
\end{definition}

The definition directly implies the following basic property.

\begin{proposition}
	\label{Fe}
	Let $u\in\mathcal{F}_e$ and let $\{u_n\}_{n\ge 1}$ be an approximating sequence of $u$. Then
	\[
	\lim_{n\to\infty}\mathcal{E}(u_n-u)=0.  \label{fe}
	\]
\end{proposition}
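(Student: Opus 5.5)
The plan is to reduce the claim to the well-definedness of $\mathcal{E}$ on $\mathcal{F}_e$, which Definition~\ref{def:Fe} takes from \cite[Theorem 1.5.2(i)]{FukushimaOshimaTakeda.2011.489}. The point to check first is that $u_n-u$ is itself in $\mathcal{F}_e$, with an approximating sequence we can write down explicitly.

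Fix $n$ and consider the sequence $\{u_n-u_k\}_{k\ge1}\subset\mathcal{F}$. It is $\mathcal{E}$-Cauchy, because
\[
(u_n-u_k)-(u_n-u_l)=u_l-u_k
\]
and $\{u_k\}$ is $\mathcal{E}$-Cauchy. It also converges $\mu$-a.e. to $u_n-u$ as $k\to\infty$, since $u_k\to u$ $\mu$-a.e. Hence $u_n-u\in\mathcal{F}_e$ with approximating sequence $\{u_n-u_k\}_k$. By the well-definedness of $\mathcal{E}$ on $\mathcal{F}_e$,
\[
\mathcal{E}(u_n-u)=\lim_{k\to\infty}\mathcal{E}(u_n-u_k).
\]

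Now let $\varepsilon>0$ and choose $N$ such that $\mathcal{E}(u_n-u_k)<\varepsilon$ for all $n,k\ge N$; this is possible by the Cauchy property. For $n\ge N$, letting $k\to\infty$ in the identity above gives $\mathcal{E}(u_n-u)\le\varepsilon$, which proves the claim.

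There is essentially no obstacle. The only delicate step is the legitimacy of evaluating $\mathcal{E}(u_n-u)$ as a limit along the specific sequence $\{u_n-u_k\}_k$. This is exactly the independence of the approximating sequence guaranteed by \cite[Theorem 1.5.2(i)]{FukushimaOshimaTakeda.2011.489}, together with the fact that $\mathcal{F}_e$ is a linear space and $\mathcal{E}(v)$ for $v\in\mathcal{F}_e$ is defined by the quadratic form.
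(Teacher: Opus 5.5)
Your proof is correct and is essentially the paper's own argument. The paper likewise observes that $\{u_n-u_m\}_{m\ge1}$ is an approximating sequence of $u_n-u$, writes $\mathcal{E}(u_n-u)=\lim_{m\to\infty}\mathcal{E}(u_n-u_m)$ by well-definedness, and concludes from the $\mathcal{E}$-Cauchy property.
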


\begin{proof}
	For each fixed $n\ge 1$, the sequence $\{u_n-u_m\}_{m\ge 1}$ is an approximating sequence of $u_n-u$. Hence, by the definition of $(\mathcal{E},\mathcal{F}_e)$,
	\[
	\mathcal{E}(u_n-u)=\lim_{m\to\infty}\mathcal{E}(u_n-u_m).
	\]
	Since $\{u_n\}_{n\ge1}$ is an $\mathcal{E}$-Cauchy sequence, for every $\varepsilon>0$ there exists $N$ such that
	\[
	\mathcal{E}(u_n-u_m)<\frac{\varepsilon}{2}
	\]
	for all $n,m\ge N$. Therefore,
	\[
	\mathcal{E}(u_n-u)=\lim_{m\to\infty}\mathcal{E}(u_n-u_m)\le \frac{\varepsilon}{2}<\varepsilon
	\]
	for every $n\ge N$, which proves the claim.
\end{proof}

The following lemma extends the conclusion of Proposition~\ref{Fe} to $\mathcal{E}$-Cauchy sequences in the larger space $\mathcal{F}_e$.

\begin{lemma}
	\label{lem:Fe_closed}
	Let $\{u_n\}_{n\ge 1}\subset \mathcal{F}_e$ be an $\mathcal{E}$-Cauchy sequence such that $u_n\to u$ $\mu$-a.e. for some measurable function $u$. Then $u\in\mathcal{F}_e$ and
	\[
	\lim_{n\to\infty}\mathcal{E}(u_n-u)=0.
	\]
\end{lemma}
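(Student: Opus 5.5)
The plan is a diagonal argument: approximate each $u_n$ by an element of $\mathcal{F}$ close in energy and close almost everywhere. Then check that the resulting sequence in $\mathcal{F}$ is an approximating sequence of $u$, and finish with Proposition~\ref{Fe}.

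\emph{Step 1: choosing the approximants.} For each $n$, fix an approximating sequence $\{u_{n,k}\}_{k\ge1}\subset\mathcal{F}$ of $u_n$. By Proposition~\ref{Fe}, $\mathcal{E}(u_{n,k}-u_n)\to0$ as $k\to\infty$, and $u_{n,k}\to u_n$ $\mu$-a.e. To convert a.e. convergence into something quantitative, fix a strictly positive function $g\in L^1(M,\mu)$ with $\int_M g\,d\mu=1$; one exists because $\mu$ is Radon and $M$ is $\sigma$-compact. Consider the finite measure $\nu=g\,\mu$. Convergence $\mu$-a.e. implies convergence in $\nu$-measure. We may therefore choose $k_n$ and set $v_n:=u_{n,k_n}\in\mathcal{F}$ so that
\[
\mathcal{E}(v_n-u_n)<2^{-n},\qquad \nu\bigl(|v_n-u_n|>2^{-n}\bigr)<2^{-n}.
\]

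\emph{Step 2: $\{v_n\}$ is an approximating sequence of $u$.} First, Borel--Cantelli applied to $\nu$ gives $v_n-u_n\to0$ $\nu$-a.e. Since $g>0$, this is the same as $\mu$-a.e., so $v_n\to u$ $\mu$-a.e. Second, $\{v_n\}$ is $\mathcal{E}$-Cauchy. Here I use that $\sqrt{\mathcal{E}}$ is a seminorm on $\mathcal{F}_e$, i.e.\ that $\mathcal{E}$ extends to a symmetric nonnegative bilinear form on $\mathcal{F}_e$. This should be part of the cited \cite[Theorem 1.5.2]{FukushimaOshimaTakeda.2011.489}; alternatively, it follows by polarization from the fact that the limit defining $\mathcal{E}(u)$ comes from $\mathcal{E}$-Cauchy sequences. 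The triangle inequality then gives
\[
\sqrt{\mathcal{E}(v_n-v_l)}\le 2^{-n/2}+\sqrt{\mathcal{E}(u_n-u_l)}+2^{-l/2},
\]
and the right-hand side tends to zero. Hence $u\in\mathcal{F}_e$.

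\emph{Step 3: energy convergence.} By Proposition~\ref{Fe}, $\mathcal{E}(v_n-u)\to0$. Combining this with the triangle inequality,
\[
\sqrt{\mathcal{E}(u_n-u)}\le\sqrt{\mathcal{E}(u_n-v_n)}+\sqrt{\mathcal{E}(v_n-u)}\to0 .
\]

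The main obstacle I expect is Step 1, in producing a single diagonal sequence that converges a.e. The device of the finite equivalent measure $\nu$ together with Borel--Cantelli is what I would use. A secondary point is justifying the triangle inequality for $\sqrt{\mathcal{E}}$ on $\mathcal{F}_e$, including differences of elements of $\mathcal{F}_e$. For that I would note that $\{u_{n,k}-u_{l,k}\}_k$ is an approximating sequence of $u_n-u_l$, so $\mathcal{F}_e$ is a linear space and $\mathcal{E}$ passes to the limit as a seminorm.
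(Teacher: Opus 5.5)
Your proof is correct and has the same overall shape as the paper's (Appendix~\ref{App:A}). For each $n$ you pick an approximant of $u_n$ in $\mathcal{F}$ that is close in energy and makes the diagonal converge $\mu$-a.e., and you finish with the triangle inequality for $\sqrt{\mathcal{E}}$. The difference is the device that secures a.e.\ convergence of the diagonal.

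\textbf{How the paper does it.} It proves a separate diagonal extraction lemma (Proposition~\ref{diagonal}) using Egorov's theorem on finite-measure pieces, good sets $C_m$ and Borel--Cantelli. This yields an index map $n\mapsto g(n)$ with the extra property that every larger index map also works. The paper then takes $f=\max\{g,h\}$, where $h$ controls the energy error.

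\textbf{How you do it.} You pass to a finite measure $\nu$ with strictly positive density, so a.e.\ convergence becomes convergence in $\nu$-measure. You impose the quantitative bound $\nu(|u_{n,k}-u_n|>2^{-n})<2^{-n}$. Both of your requirements on $k$ are tail conditions for fixed $n$, so a common $k_n$ exists automatically, and Borel--Cantelli finishes. This replaces both the ``any larger index'' clause and the $\max\{g,h\}$ step, and it avoids the paper's index bookkeeping ($K(n,m,l)$, $n(m,l)$). The only cost is that you do not isolate a reusable pointwise extraction lemma.

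\textbf{A step you make explicit.} In Step~2 you show that the diagonal sequence is $\mathcal{E}$-Cauchy. That is exactly what is needed to conclude $u\in\mathcal{F}_e$ and to apply Proposition~\ref{Fe} to get $\mathcal{E}(v_n-u)\to 0$. The paper's write-up passes directly to $\mathcal{E}(v_{n,f(n)}-v)\to0$ without writing out this estimate.
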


Since the proof of Lemma~\ref{lem:Fe_closed} is lengthy and technical, we relegate it to Appendix~\ref{App:A} to avoid interrupting the main flow.

In general, the space $(\mathcal{F}_e,\sqrt{\mathcal{E}})$ is not complete; indeed, there exists a Dirichlet form for which the extended space is not complete in the energy seminorm (see \cite[Corollary 6.3]{Schmidt2022}). Thus $\mathcal{F}_e$ is not the ultimate solution to the completeness problem.

Despite the incompleteness of $(\mathcal{F}_e,\sqrt{\mathcal{E}})$, the intersection $\mathcal{F}_e\cap L^2$ recovers the original domain $\mathcal{F}$ (see \cite[Theorem 1.5.2]{FukushimaOshimaTakeda.2011.489}):
\begin{equation}
	\mathcal{F}=\mathcal{F}_e\cap L^2(M,\mu).  \label{eq:FeL2}
\end{equation}

The identity \eqref{eq:FeL2} shows that, when $q=2$, intersecting $\mathcal{F}_e$ with $L^q$ gives back the original form domain $\mathcal{F}$, which is complete under the $\mathcal{E}_1$-norm. For $q\neq 2$, however, the space $L^q\cap\mathcal{F}_e$ is not covered by the standard theory and, to the best of our knowledge, has not been systematically studied in this generality. In the next section, we shall systematically study these intersection spaces and prove that they are reflexive Banach spaces under suitable norms—a fact that will be essential for the Minty--Browder argument in Section~\ref{Sec:Epi}.

\section{Functional framework: the spaces $V^q \coloneqq L^q \cap \mathcal{F}_e$}
\label{Sec:Vq}

Motivated by the discussion at the end of Section~\ref{Sec:DF}, for every \(1<q<\infty\), we introduce the intersection space
\[
V^q := L^q \cap \mathcal{F}_e,
\]
and define the functional
\begin{equation}
	\|u\|_{V^q} := \left( \|u\|_{L^q}^2 + \mathcal{E}(u) \right)^{1/2}.
	\label{eq:VqNorm}
\end{equation}

\subsection{Basic properties of $V^q$}

In this subsection we prove that \((V^q,\|\cdot\|_{V^q})\) is a reflexive Banach space for every \(1<q<\infty\). To this end, we first prove completeness and then establish uniform convexity, which implies reflexivity by the Milman--Pettis theorem.

\begin{proposition}
	\label{prop:Vq_norm}
	For every \(1<q<\infty\), \(\|\cdot\|_{V^q}\) is a norm on \(V^q\).
\end{proposition}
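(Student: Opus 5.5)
We verify the three norm axioms directly, viewing \(\|\cdot\|_{V^q}\) as the Euclidean norm of a pair of seminorms. First, \(V^q\) is a vector space: \(L^q\) is a vector space, and \(\mathcal{F}_e\) is one too. Indeed, if \(\{u_k\}\) and \(\{v_k\}\) are approximating sequences of \(u,v\in\mathcal{F}_e\), then \(\{au_k+bv_k\}\) is \(\mathcal{E}\)-Cauchy and converges \(\mu\)-a.e. to \(au+bv\). By Definition~\ref{def:Fe}, the quantity \(\mathcal{E}(u,u)\) is then well defined and nonnegative on \(\mathcal{F}_e\).

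The next step is to check that \(\sqrt{\mathcal{E}(\cdot)}\) is a seminorm on \(\mathcal{F}_e\). Homogeneity \(\mathcal{E}(au)=a^2\mathcal{E}(u)\) follows by passing to the limit along \(\{au_k\}\). For the triangle inequality, on \(\mathcal{F}\) we have \(\sqrt{\mathcal{E}(u_k+v_k)}\le\sqrt{\mathcal{E}(u_k)}+\sqrt{\mathcal{E}(v_k)}\) by Cauchy--Schwarz for the nonnegative symmetric form \(\mathcal{E}\). Letting \(k\to\infty\), and using that \(\{u_k+v_k\}\) approximates \(u+v\) together with the well-definedness of the limit, gives the inequality on \(\mathcal{F}_e\). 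The same limiting procedure shows that \(\mathcal{E}\) extends to a symmetric bilinear form on \(\mathcal{F}_e\) via polarization, though only the seminorm property is needed here.

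With this in hand, the axioms follow from an elementary inequality. Write \(a(u)=\|u\|_{L^q}\) and \(b(u)=\sqrt{\mathcal{E}(u)}\); both are seminorms on \(V^q\), and \(\|u\|_{V^q}=\|(a(u),b(u))\|_{\ell^2}\). Homogeneity is immediate. For the triangle inequality, monotonicity of the \(\ell^2\) norm on nonnegative vectors and Minkowski's inequality in \(\mathbb{R}^2\) give
\[
\|(a(u+v),b(u+v))\|_{\ell^2}\le\|(a(u)+a(v),\,b(u)+b(v))\|_{\ell^2}\le \|u\|_{V^q}+\|v\|_{V^q}.
\]
Definiteness is also easy: if \(\|u\|_{V^q}=0\), then \(\|u\|_{L^q}=0\), so \(u=0\) \(\mu\)-a.e. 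Definiteness therefore comes from the \(L^q\) part alone, and we never need the fact that \(\mathcal{E}(u)=0\) does not force \(u=0\) on \(\mathcal{F}_e\).

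The only step that is not purely formal is the triangle inequality for \(\sqrt{\mathcal{E}}\) on \(\mathcal{F}_e\). It rests on the independence of \(\mathcal{E}(u,u)\) from the choice of approximating sequence, which is quoted in Definition~\ref{def:Fe} from \cite[Theorem 1.5.2(i)]{FukushimaOshimaTakeda.2011.489}.
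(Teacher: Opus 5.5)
Your proof is correct and is essentially the paper's argument. The paper expands $\|v+w\|_{V^q}^2$ and applies Cauchy--Schwarz in $\mathbb{R}^2$ and for $\mathcal{E}$; this is your Minkowski-in-$\mathbb{R}^2$ step combined with the seminorm property of $\sqrt{\mathcal{E}}$, and in both proofs definiteness comes from the $L^q$ part. You also check explicitly, via approximating sequences, that $\mathcal{F}_e$ is a vector space and that $\sqrt{\mathcal{E}}$ is a seminorm on it, which the paper uses without comment.
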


\begin{proof}
	Homogeneity and nonnegativity are immediate from the definition. Moreover, since
	\[
	\|v\|_{L^q} \le \|v\|_{V^q} \qquad \forall v\in V^q,
	\]
	we have \(\|v\|_{V^q}=0\) if and only if \(v=0\) \(\mu\)-a.e.
	
	It remains to verify the triangle inequality. Let \(v,w\in V^q\). Then
	\begin{align}
		\|v+w\|_{V^q}^2 &= \|v+w\|_{L^q}^2 + \mathcal{E}(v+w) \notag \\
		&\le \bigl(\|v\|_{L^q}+\|w\|_{L^q}\bigr)^2 + \bigl(\mathcal{E}(v)+\mathcal{E}(w)+2\mathcal{E}(v,w)\bigr) \notag \\
		&= \|v\|_{V^q}^2 + \|w\|_{V^q}^2 + 2\bigl(\|v\|_{L^q}\|w\|_{L^q} + \mathcal{E}(v,w)\bigr). \label{eq:tri-1}
	\end{align}
	On the other hand,
	\begin{align}
		\bigl(\|v\|_{V^q}+\|w\|_{V^q}\bigr)^2 &= \|v\|_{V^q}^2 + \|w\|_{V^q}^2 + 2\|v\|_{V^q}\|w\|_{V^q} \notag \\
		&= \|v\|_{V^q}^2 + \|w\|_{V^q}^2 + 2\sqrt{\|v\|_{L^q}^2+\mathcal{E}(v)} \sqrt{\|w\|_{L^q}^2+\mathcal{E}(w)}. \label{eq:tri-2}
	\end{align}
	Subtracting \eqref{eq:tri-1} from \eqref{eq:tri-2} yields
	\begin{equation}
		\begin{aligned}
			&\bigl(\|v\|_{V^q}+\|w\|_{V^q}\bigr)^2 - \|v+w\|_{V^q}^2 \\
			&= 2\Big( \sqrt{\|v\|_{L^q}^2+\mathcal{E}(v)} \sqrt{\|w\|_{L^q}^2+\mathcal{E}(w)} - \|v\|_{L^q}\|w\|_{L^q} - \mathcal{E}(v,w) \Big).
		\end{aligned} \label{eq:tri-diff}
	\end{equation}
	By the Cauchy--Schwarz inequality in \(\mathbb{R}^2\) applied to the pairs \((\|v\|_{L^q}, \sqrt{\mathcal{E}(v)})\) and \((\|w\|_{L^q}, \sqrt{\mathcal{E}(w)})\), we have
	\[
	\sqrt{\|v\|_{L^q}^2+\mathcal{E}(v)} \sqrt{\|w\|_{L^q}^2+\mathcal{E}(w)} \ge \|v\|_{L^q}\|w\|_{L^q} + \sqrt{\mathcal{E}(v)\mathcal{E}(w)}.
	\]
	Since \(|\mathcal{E}(v,w)| \le \sqrt{\mathcal{E}(v)\mathcal{E}(w)}\) by the Cauchy--Schwarz inequality for \(\mathcal{E}\), it follows that
	\[
	\sqrt{\mathcal{E}(v)\mathcal{E}(w)} \ge \mathcal{E}(v,w).
	\]
	Combining these estimates gives
	\[
	\sqrt{\|v\|_{L^q}^2+\mathcal{E}(v)} \sqrt{\|w\|_{L^q}^2+\mathcal{E}(w)} \ge \|v\|_{L^q}\|w\|_{L^q} + \mathcal{E}(v,w).
	\]
	Inserting this into \eqref{eq:tri-diff} yields
	\[
	\bigl(\|v\|_{V^q}+\|w\|_{V^q}\bigr)^2 - \|v+w\|_{V^q}^2 \ge 0,
	\]
	which is precisely the triangle inequality. This completes the proof.
\end{proof}

Having established that \(\|\cdot\|_{V^q}\) is a norm, we now address completeness.

\begin{proposition}
	\label{prop:Vq_Banach}
	For every \(1<q<\infty\), \((V^q,\|\cdot\|_{V^q})\) is a Banach space.
\end{proposition}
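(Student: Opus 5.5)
Since Proposition~\ref{prop:Vq_norm} already shows that \(\|\cdot\|_{V^q}\) is a norm, only completeness is at stake. Let \(\{u_n\}_{n\ge1}\subset V^q\) be Cauchy in \(\|\cdot\|_{V^q}\). From \(\|v\|_{L^q}\le\|v\|_{V^q}\) and \(\sqrt{\mathcal{E}(v)}\le\|v\|_{V^q}\), the sequence is Cauchy in \(L^q\) and \(\mathcal{E}\)-Cauchy. By completeness of \(L^q\) there is \(u\in L^q\) with \(u_n\to u\) in \(L^q\). We pass to a subsequence \(\{u_{n_k}\}\) converging to \(u\) \(\mu\)-a.e.

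The subsequence \(\{u_{n_k}\}\subset\mathcal{F}_e\) is still \(\mathcal{E}\)-Cauchy and converges \(\mu\)-a.e.\ to \(u\). Lemma~\ref{lem:Fe_closed} then gives \(u\in\mathcal{F}_e\) and \(\mathcal{E}(u_{n_k}-u)\to0\). Hence \(u\in V^q\), and \(\|u_{n_k}-u\|_{V^q}\to0\), since both the \(L^q\) part and the energy part tend to zero.

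To upgrade to the full sequence, we use the standard fact that a Cauchy sequence with a convergent subsequence converges to the same limit: \(\|u_n-u\|_{V^q}\le\|u_n-u_{n_k}\|_{V^q}+\|u_{n_k}-u\|_{V^q}\), and both terms are small for large \(n,k\). (The second term uses the triangle inequality from Proposition~\ref{prop:Vq_norm}.)

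All the substantive difficulty lies in Lemma~\ref{lem:Fe_closed}. This is where the incompleteness of \((\mathcal{F}_e,\sqrt{\mathcal{E}})\) is circumvented: the \(L^q\) convergence supplies the a.e.\ limit that energy convergence alone cannot. Taking that lemma as given, the only point needing care is the subsequence extraction for a.e.\ convergence before invoking it.
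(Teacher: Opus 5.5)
Your proof is correct and follows the paper's argument essentially step for step. You use that the sequence is Cauchy in $L^q$ and in energy, extract an a.e.-convergent subsequence, apply Lemma~\ref{lem:Fe_closed} to place the limit in $\mathcal{F}_e$ with energy convergence along the subsequence, and then upgrade to the full sequence. The only cosmetic difference is in that last upgrade: the paper does it just for the energy part via the triangle inequality for $\sqrt{\mathcal{E}}$, since $L^q$ convergence of the full sequence is already known, whereas you do it directly in $\|\cdot\|_{V^q}$ using Proposition~\ref{prop:Vq_norm}; both work.
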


\begin{proof}
	Let \(\{v_n\}_{n\ge1}\) be a Cauchy sequence in \(V^q\). Then
	\[
	\lim_{n,m\to\infty} \|v_n-v_m\|_{V^q}
	= \lim_{n,m\to\infty} \sqrt{\|v_n-v_m\|_{L^q}^2 + \mathcal{E}(v_n-v_m)}
	= 0.
	\]
	In particular, \(\{v_n\}_{n\ge1}\) is Cauchy in \(L^q\) and \(\mathcal{E}\)-Cauchy in \(\mathcal{F}_e\). Since \(L^q\) is complete, there exists \(v\in L^q\) such that
	\begin{equation}
		\lim_{n\to\infty} \|v_n-v\|_{L^q}=0. \label{eq:Lq-limit}
	\end{equation}
	Hence \(v_n\to v\) in measure; passing to a subsequence if necessary, we may assume \(v_{k_n}\to v\) \(\mu\)-a.e. as \(n\to\infty\). Since \(\{v_{k_n}\}_{n\ge1}\) is also \(\mathcal{E}\)-Cauchy, Lemma~\ref{lem:Fe_closed} gives \(v\in\mathcal{F}_e\) and
	\begin{equation}
		\lim_{n\to\infty} \mathcal{E}(v_{k_n}-v)=0. \label{eq:Ek-limit}
	\end{equation}
	
	It remains to show that the full sequence \(\{v_n\}_{n\ge1}\) converges with respect to the energy norm. To this end, fix \(\varepsilon>0\). Since \(\{v_n\}_{n\ge1}\) is \(\mathcal{E}\)-Cauchy, there exists \(N_\varepsilon\) such that
	\[
	\sqrt{\mathcal{E}(v_l-v_{k_n})} < \varepsilon \quad\text{whenever } l,n \ge N_\varepsilon.
	\]
	Letting \(n\to\infty\) in the above inequality, we obtain, by the triangle inequality for \(\mathcal{E}\) and \eqref{eq:Ek-limit},
	\[
	\sqrt{\mathcal{E}(v_l-v)} \le \lim_{n\to\infty} \left( \sqrt{\mathcal{E}(v_l-v_{k_n})} + \sqrt{\mathcal{E}(v_{k_n}-v)} \right) \le \varepsilon\quad\text{ for any }l\ge N_\varepsilon.
	\]
	Hence
	\[
	\limsup_{l\to\infty} \sqrt{\mathcal{E}(v_l-v)} \le \varepsilon.
	\]
	Since \(\varepsilon>0\) was arbitrary, we conclude
	\[
	\lim_{l\to\infty} \mathcal{E}(v_l-v)=0.
	\]
	Together with \eqref{eq:Lq-limit}, this gives
	\[
	\lim_{n\to\infty} \|v_n-v\|_{V^q}
	= \lim_{n\to\infty} \sqrt{\|v_n-v\|_{L^q}^2 + \mathcal{E}(v_n-v)}
	= 0.
	\]
	Thus \((V^q,\|\cdot\|_{V^q})\) is complete.
\end{proof}

The Banach space structure alone, however, is not sufficient for the application of the Minty--Browder theorem; we also require reflexivity. A convenient route to reflexivity is to establish uniform convexity, which we do in the following proposition.

\begin{proposition}
	\label{prop:Vq_uniformly_convex}
	For every $1<q<\infty$, $(V^q,\|\cdot\|_{V^q})$ is uniformly convex.
\end{proposition}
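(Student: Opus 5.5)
We need to show: if $\|u\|_{V^q}=\|v\|_{V^q}=1$ and $\|u-v\|_{V^q}\ge\varepsilon$, then $\|(u+v)/2\|_{V^q}\le 1-\delta(\varepsilon)$. The idea is to view $V^q$ as isometrically embedded into the $\ell^2$-direct sum $L^q\oplus_2 H$. Here $H$ is the seminormed (pre-Hilbert) space $(\mathcal{F}_e,\sqrt{\mathcal{E}})$, and the map is $u\mapsto(u,u)$. By definition \eqref{eq:VqNorm}, $\|u\|_{V^q}^2=\|u\|_{L^q}^2+\mathcal{E}(u)$. Uniform convexity passes to subspaces and does not require completeness, so it suffices to prove that the $\ell^2$-sum of a uniformly convex space with an inner-product (semi)norm space is uniformly convex. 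This is the classical Day-type result for $\ell^2$-sums of two uniformly convex spaces. I would give a direct argument rather than cite it, because $\sqrt{\mathcal{E}}$ is only a seminorm on $\mathcal{F}_e$; it is a genuine norm only jointly with the $L^q$ part.

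\textbf{Step 1.} Record the moduli. By Clarkson's inequalities $L^q$ is uniformly convex for $1<q<\infty$, with modulus $\delta_q$. For $\mathcal{E}$ the parallelogram law gives
\[
\mathcal{E}\Bigl(\tfrac{u+v}{2}\Bigr)=\tfrac12\mathcal{E}(u)+\tfrac12\mathcal{E}(v)-\tfrac14\mathcal{E}(u-v).
\]

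\textbf{Step 2.} Set $a_1=\|u\|_{L^q}$, $a_2=\|v\|_{L^q}$, $b_1=\sqrt{\mathcal{E}(u)}$, $b_2=\sqrt{\mathcal{E}(v)}$, so that $a_i^2+b_i^2=1$. Write
\[
\Bigl\|\tfrac{u+v}{2}\Bigr\|_{V^q}^2=\Bigl\|\tfrac{u+v}{2}\Bigr\|_{L^q}^2+\mathcal{E}\Bigl(\tfrac{u+v}{2}\Bigr).
\]
The triangle inequality gives $\|\tfrac{u+v}{2}\|_{L^q}\le \tfrac{a_1+a_2}{2}$. By convexity of $t\mapsto t^2$ this is at most $\tfrac{a_1^2+a_2^2}{2}-\tfrac{(a_1-a_2)^2}{4}$ after squaring. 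Adding the energy identity yields
\[
\Bigl\|\tfrac{u+v}{2}\Bigr\|_{V^q}^2\le 1-\tfrac{(a_1-a_2)^2}{4}-\tfrac14\mathcal{E}(u-v).
\]
Hence, if $\mathcal{E}(u-v)\ge \varepsilon^2/2$, we already have a gain $\varepsilon^2/8$. Otherwise $\|u-v\|_{L^q}^2\ge\varepsilon^2/2$. If moreover $|a_1-a_2|$ is bounded below by some $\eta$, we gain $\eta^2/4$.

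\textbf{Step 3 (main obstacle).} It remains to treat $\|u-v\|_{L^q}\ge\varepsilon/\sqrt2$ with $a_1\approx a_2$. Here Clarkson's uniform convexity must be applied to $L^q$ vectors of possibly unequal and possibly small norm.

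First, $a_1,a_2\ge \varepsilon/(2\sqrt2)$: otherwise both are small when $|a_1-a_2|<\eta$, contradicting $\|u-v\|_{L^q}\ge\varepsilon/\sqrt2$, provided $\eta$ is chosen small.

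Next, normalize $\tilde u=u/a_1$ and $\tilde v=v/a_2$. Then $\|\tilde u-\tilde v\|_{L^q}\ge c(\varepsilon)$, up to an error controlled by $|a_1-a_2|/a_i$. Uniform convexity of $L^q$ gives
\[
\Bigl\|\tfrac{u+v}{2}\Bigr\|_{L^q}\le \tfrac{a_1+a_2}{2}\bigl(1-\delta_q(c)\bigr)+O(\eta/\varepsilon).
\]
Plugging this into the computation of Step 2 yields a reduction of order $a^2\delta_q$, which is bounded below in terms of $\varepsilon$ alone.

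\textbf{Conclusion.} Choose $\eta=\eta(\varepsilon)$ small enough to absorb the error terms. Taking the minimum of the three gains defines $\delta(\varepsilon)>0$, which proves uniform convexity.
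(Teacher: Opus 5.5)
Your proposal is correct and follows essentially the same route as the paper. Both arguments use the bound $\|\tfrac{u+v}{2}\|_{V^q}^2\le 1-\tfrac14(a_1-a_2)^2-\tfrac14\mathcal{E}(u-v)$ and the same three-way case split: the energy of $u-v$ is large; the $L^q$ norms are far apart; or the $L^q$ norms are close, in which case you normalize and invoke uniform convexity of $L^q$.

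The differences are minor. The paper fixes $\eta=\varepsilon/4$ and, assuming $a\ge b$, writes $a\tilde v+b\tilde w=(a-b)\tilde v+b(\tilde v+\tilde w)$. This gives $\|\tfrac{v+w}{2}\|_{L^q}\le\tfrac{a+b}{2}-b\delta_q$ with no additive error to absorb. Also, your lower bound $a_i\ge\varepsilon/(2\sqrt2)$ should read $\min_i a_i>\varepsilon/(2\sqrt2)-\eta/2$, which is harmless once $\eta$ is small.
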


\begin{proof}
	Fix $\varepsilon>0$ and take $v,w\in V^q$ such that
	\begin{equation}
		\|v\|_{V^q}=\|w\|_{V^q}=1,\qquad \|v-w\|_{V^q}>\varepsilon. \label{eq:uc-assume}
	\end{equation}
	From the definition of $\|\cdot\|_{V^q}$ and the triangle inequality for $\|\cdot\|_{L^q}$, we have
	\begin{align}
		\left\|\frac{v+w}{2}\right\|_{V^q}^2
		&= \frac{1}{4}\|v+w\|_{V^q}^2
		= \frac{1}{4}\left(\|v+w\|_{L^q}^2 + \mathcal{E}(v+w)\right) \notag \\
		&\le \frac{1}{4}\left(\bigl(\|v\|_{L^q}+\|w\|_{L^q}\bigr)^2 + \mathcal{E}(v+w)\right). \label{eq:uc-e}
	\end{align}
	Using the identities
	\[
	\mathcal{E}(v+w)=2\mathcal{E}(v)+2\mathcal{E}(w)-\mathcal{E}(v-w),
	\qquad
	\left(\|v\|_{L^q}+\|w\|_{L^q}\right)^2=2\|v\|_{L^q}^2+2\|w\|_{L^q}^2-\left(\|v\|_{L^q}-\|w\|_{L^q}\right)^2,
	\]
	we obtain from \eqref{eq:uc-e} that
	\begin{align}
		\left\|\frac{v+w}{2}\right\|_{V^q}^2
		&\le \frac{1}{4}\left(2\|v\|_{L^q}^2+2\|w\|_{L^q}^2-\bigl(\|v\|_{L^q}-\|w\|_{L^q}\bigr)^2+2\mathcal{E}(v)+2\mathcal{E}(w)-\mathcal{E}(v-w)\right) \notag \\
		&= \frac{1}{4}\left(2\|v\|_{V^q}^2+2\|w\|_{V^q}^2-\bigl(\|v\|_{L^q}-\|w\|_{L^q}\bigr)^2-\mathcal{E}(v-w)\right) \notag \\
		&= \frac{1}{4}\left(4-\bigl(\|v\|_{L^q}-\|w\|_{L^q}\bigr)^2-\mathcal{E}(v-w)\right). \label{eq:uc-estimate}
	\end{align}
	
	We now distinguish two cases.
	
	\medskip
	
	\noindent\textbf{Case 1:} $\mathcal{E}(v-w)>\varepsilon^2/4$.
	
	Then from \eqref{eq:uc-estimate} we get
	\begin{equation} \label{eq:ucd01}
		\left\|\frac{v+w}{2}\right\|_{V^q}^2 \le \frac{1}{4}\left(4-\frac{\varepsilon^2}{4}\right)=1-\frac{\varepsilon^2}{16}.
	\end{equation}
	
	\medskip
	
	\noindent\textbf{Case 2:} $\mathcal{E}(v-w)\le \varepsilon^2/4$.
	
	Since $\|v-w\|_{V^q}^2=\|v-w\|_{L^q}^2+\mathcal{E}(v-w)>\varepsilon^2$, we have
	\begin{equation}
		\|v-w\|_{L^q}>\frac{\varepsilon}{2}. \label{eq:uc-lq-bound}
	\end{equation}
	
	If $\bigl|\|v\|_{L^q}-\|w\|_{L^q}\bigr|>\varepsilon/4$, then \eqref{eq:uc-estimate} yields
	\begin{equation} \label{eq:ucd02}
		\left\|\frac{v+w}{2}\right\|_{V^q}^2 \le \frac{1}{4}\left(4-\left(\frac{\varepsilon}{4}\right)^2\right)=1-\frac{\varepsilon^2}{64}.
	\end{equation}
	Thus we may assume
	\begin{equation*}
		\bigl|\|v\|_{L^q}-\|w\|_{L^q}\bigr|\le \frac{\varepsilon}{4}.
	\end{equation*}
	
	Set $a:=\|v\|_{L^q}$ and $b:=\|w\|_{L^q}$. Then $a,b\in[0,1]$. By symmetry we may assume $a\ge b$. We claim $b>0$; indeed, if $b=0$, then $w=0$ $\mu$-a.e., so $a=\|v-w\|_{L^q}>\varepsilon/2$, but $a-b=a\le \varepsilon/4$, a contradiction.
	
	Define
	\[
	\tilde v:=\frac{v}{a},\qquad \tilde w:=\frac{w}{b}.
	\]
	Then $\|\tilde v\|_{L^q}=\|\tilde w\|_{L^q}=1$.
	
	From \eqref{eq:uc-lq-bound} and the triangle inequality,
	\[
	\frac{\varepsilon}{2}<\|v-w\|_{L^q}=\|a\tilde v-b\tilde w\|_{L^q}
	\le |a-b|\|\tilde v\|_{L^q}+b\|\tilde v-\tilde w\|_{L^q}
	\le \frac{\varepsilon}{4}+\|\tilde v-\tilde w\|_{L^q},
	\]
	hence
	\[
	\|\tilde v-\tilde w\|_{L^q}>\frac{\varepsilon}{4}.
	\]
	By the uniform convexity of $L^q(M,\mu)$, there exists $\delta_q\in(0,1)$, depending only on $q$ and $\varepsilon$, such that
	\[
	\left\|\frac{\tilde v+\tilde w}{2}\right\|_{L^q}\le 1-\delta_q.
	\]
	
	Now,
	\begin{align}
		\left\|\frac{v+w}{2}\right\|_{L^q}
		&= \left\|\frac{a\tilde v+b\tilde w}{2}\right\|_{L^q} \le \frac{a-b}{2}\|\tilde v\|_{L^q}+b\left\|\frac{\tilde v+\tilde w}{2}\right\|_{L^q} \notag \\
		&\le \frac{a-b}{2}+b(1-\delta_q)=\frac{a+b}{2}-b\delta_q. \label{eq:uc-norm-est}
	\end{align}
	Using $a^2+\mathcal{E}(v)=1$ and $b^2+\mathcal{E}(w)=1$, we get
	\begin{align}
		\left\|\frac{v+w}{2}\right\|_{V^q}^2
		&= \left\|\frac{v+w}{2}\right\|_{L^q}^2+\mathcal{E}\left(\frac{v+w}{2}\right) \notag \\
		&\le \left(\frac{a+b}{2}-b\delta_q\right)^2+\frac{\mathcal{E}(v)+\mathcal{E}(w)}{2} \notag \\
		&= \left(\frac{a+b}{2}-b\delta_q\right)^2+\frac{(1-a^2)+(1-b^2)}{2} \notag \\
		&\le 1-ab\delta_q-b^2\delta_q(1-\delta_q) \notag \\
		&\le 1-ab\delta_q \le 1-b^2\delta_q, \label{eq:uc-final}
	\end{align}
	where the last two inequalities follow from $0<\delta_q<1$ and $0<b\le a$, respectively. Thus
	\[
	\left\|\frac{v+w}{2}\right\|_{V^q}\le \sqrt{1-b^2\delta_q}.
	\]
	
	Finally, from $a-b\le \varepsilon/4$ and $a+b\ge \|v-w\|_{L^q}>\varepsilon/2$, we obtain $b>\varepsilon/8$. Hence
	\begin{equation} \label{ucd03}
		\left\|\frac{v+w}{2}\right\|_{V^q}<\sqrt{1-\frac{\varepsilon^2}{64}\delta_q}.
	\end{equation}
	
	Combining \eqref{eq:ucd01}, \eqref{eq:ucd02} and \eqref{ucd03}, for every $\varepsilon>0$, there exists
	\[
	\delta:=1-\sqrt{1-\frac{\varepsilon^2}{64}\delta_q}\in(0,1),
	\]
	depending only on $\varepsilon$, such that whenever $\|v\|_{V^q}=\|w\|_{V^q}=1$ and $\|v-w\|_{V^q}>\varepsilon$, we have
	\[
	\left\|\frac{v+w}{2}\right\|_{V^q}\le 1-\delta.
	\]
	By definition, $(V^q,\|\cdot\|_{V^q})$ is uniformly convex.
\end{proof}

\begin{corollary}
	\label{cor:Vq_reflexive}
	For every $1<q<\infty$, $V^q$ is a reflexive Banach space.
\end{corollary}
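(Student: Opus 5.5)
This corollary should follow directly from the two preceding propositions together with the Milman--Pettis theorem. By Proposition~\ref{prop:Vq_norm}, $\|\cdot\|_{V^q}$ is a genuine norm on $V^q$, and by Proposition~\ref{prop:Vq_Banach}, $(V^q,\|\cdot\|_{V^q})$ is complete. Hence $V^q$ is a Banach space for every $1<q<\infty$.

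Next we invoke Proposition~\ref{prop:Vq_uniformly_convex}: $(V^q,\|\cdot\|_{V^q})$ is uniformly convex. The Milman--Pettis theorem states that every uniformly convex Banach space is reflexive. Applying it to $V^q$ gives the conclusion.

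The only point needing care is that Milman--Pettis requires uniform convexity of the norm itself, not merely of some equivalent norm. Reflexivity, however, is invariant under passing to equivalent norms, so either version suffices. Here Proposition~\ref{prop:Vq_uniformly_convex} is stated for exactly the norm $\|\cdot\|_{V^q}$ of \eqref{eq:VqNorm}, so the hypothesis is met directly. Nothing substantive remains once completeness and uniform convexity are in hand; the corollary is immediate.
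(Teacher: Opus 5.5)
Your proposal is correct and matches the paper's proof exactly: $V^q$ is a Banach space by Propositions~\ref{prop:Vq_norm} and~\ref{prop:Vq_Banach}, it is uniformly convex by Proposition~\ref{prop:Vq_uniformly_convex}, and the Milman--Pettis theorem then gives reflexivity. Your remark about equivalent norms is harmless but not needed, since uniform convexity is established for $\|\cdot\|_{V^q}$ itself.
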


\begin{proof}
	This follows immediately from Propositions \ref{prop:Vq_Banach} and \ref{prop:Vq_uniformly_convex}, together with the Milman--Pettis theorem.
\end{proof}

\subsection{Commutation lemmas among weak limits, energy and Bochner integrals}

To implement the limiting procedure in Section~\ref{Sec:Para}, we shall need to interchange the order of several operations: weak limits, the Dirichlet energy $\mathcal{E}$, and time integrals. The three lemmas collected below provide the necessary commutation rules.

The lemmas are stated in a general setting that includes our situation as a special case: they hold for any Banach space $X$ and any symmetric non-negative bilinear form $\mathcal{B}:X\times X\to\mathbb{R}$ satisfying
\begin{equation}
	|\mathcal{B}(x,y)|\le C\|x\|_X\|y\|_X\qquad \forall x,y\in X, \label{eq:bounded-bilinear}
\end{equation}
for some constant $C>0$ independent of $x,y$. In particular, for each fixed $y\in X$, the linear functional
\[
\mathcal{B}(\cdot,y): x\mapsto \mathcal{B}(x,y)
\]
is bounded on $X$, with \(\|\mathcal{B}(\cdot, y)\|_{X^*}\le C\|y\|_X\) and
\begin{equation}\label{eq:pullback}
	\mathcal{B}(x,y)=\langle\mathcal{B}(\cdot,y),x\rangle_X.
\end{equation}

The first lemma deals with the interchange of limits and the energy functional.

\begin{lemma}[Lower semicontinuity of the energy]
	\label{lem:lsc}
	Assume \eqref{eq:bounded-bilinear}. If $\{x_k\}\subset X$ converges weakly to $x$ in $X$ as $k\to\infty$, then
	\begin{equation}
		\mathcal{B}(x,x)\le \liminf_{k\to\infty}\mathcal{B}(x_k,x_k).
		\label{eq:lsc}
	\end{equation}
\end{lemma}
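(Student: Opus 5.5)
The plan is to use only the non-negativity of $\mathcal{B}$ and the pullback identity \eqref{eq:pullback}; no compactness or Hilbert structure is needed. The key observation is that for every $k$,
\[
0\le \mathcal{B}(x_k-x,x_k-x)=\mathcal{B}(x_k,x_k)-2\mathcal{B}(x_k,x)+\mathcal{B}(x,x),
\]
by bilinearity and symmetry of $\mathcal{B}$. Rearranging gives
\[
\mathcal{B}(x_k,x_k)\ge 2\mathcal{B}(x_k,x)-\mathcal{B}(x,x).
\]

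Next I will pass to the limit in the cross term. Since $x$ is fixed, \eqref{eq:bounded-bilinear} shows that $\mathcal{B}(\cdot,x)$ belongs to $X^*$. By \eqref{eq:pullback}, $\mathcal{B}(x_k,x)=\langle \mathcal{B}(\cdot,x),x_k\rangle_X$. Weak convergence $x_k\rightharpoonup x$ then gives $\mathcal{B}(x_k,x)\to \langle\mathcal{B}(\cdot,x),x\rangle_X=\mathcal{B}(x,x)$.

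Taking $\liminf_{k\to\infty}$ in the rearranged inequality therefore yields
\[
\liminf_{k\to\infty}\mathcal{B}(x_k,x_k)\ge 2\mathcal{B}(x,x)-\mathcal{B}(x,x)=\mathcal{B}(x,x),
\]
which is \eqref{eq:lsc}.

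There is no genuine obstacle here. The only point needing care is that weak convergence is tested against the single fixed functional $\mathcal{B}(\cdot,x)$, and the boundedness assumption \eqref{eq:bounded-bilinear} is exactly what guarantees this functional lies in $X^*$. If $\liminf\mathcal{B}(x_k,x_k)=+\infty$ the claim is trivial; in fact this cannot occur, since weakly convergent sequences are bounded. An alternative route would use Cauchy--Schwarz, $\mathcal{B}(x_k,x)\le \mathcal{B}(x_k)^{1/2}\mathcal{B}(x)^{1/2}$, followed by a limit, but the expansion above is more direct.
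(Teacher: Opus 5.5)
Your proof is correct and follows essentially the paper's approach: both test the weak convergence of $x_k$ against the single functional $\mathcal{B}(\cdot,x)\in X^*$. The only difference is that the paper then uses the Cauchy--Schwarz inequality $\mathcal{B}(x_k,x)^2\le \mathcal{B}(x_k,x_k)\mathcal{B}(x,x)$, which forces a case split on $\mathcal{B}(x,x)=0$ and a division, whereas your expansion of $\mathcal{B}(x_k-x,x_k-x)\ge 0$ gives the inequality directly and needs neither.
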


\begin{proof}
	If $\mathcal{B}(x,x)=0$, then \eqref{eq:lsc} is trivial. Suppose $\mathcal{B}(x,x)>0$. By \eqref{eq:bounded-bilinear}, the map $\mathcal{B}(\cdot,x)$ is a bounded linear functional on $X$. Hence, from $x_k\rightharpoonup x$ in $X$, we have
	\begin{equation}
		\lim_{k\to\infty}\mathcal{B}(x_k,x)=\mathcal{B}(x,x). \label{eq:lsc-weak}
	\end{equation}
	On the other hand, the Cauchy--Schwarz inequality for $\mathcal{B}$ gives
	\[
	\mathcal{B}(x_k,x)^2\le \mathcal{B}(x_k,x_k)\mathcal{B}(x,x).
	\]
	Taking the lower limit as $k\to\infty$ and using \eqref{eq:lsc-weak}, we obtain
	\[
	\mathcal{B}(x,x)^2\le \mathcal{B}(x,x)\cdot \liminf_{k\to\infty}\mathcal{B}(x_k,x_k).
	\]
	Since $\mathcal{B}(x,x)>0$, dividing by $\mathcal{B}(x,x)$ yields \eqref{eq:lsc}. This completes the proof.
\end{proof}

The next lemma ensures that weak convergence in the Bochner space is preserved under time integration.

\begin{lemma}[Weak continuity of the Bochner integral]
	\label{lem:bochner-weak}
	Suppose that $\{\mathbf{x}_k\}_{k\ge1}\cup\{\mathbf{x}\}\subset L^2(0,T;X)$ and $\mathbf{x}_k\rightharpoonup \mathbf{x}$ in $L^2(0,T;X)$ as $k\to\infty$. Then, for every measurable interval $I\subset[0,T]$,
	\begin{equation}
		\int_I\mathbf{x}_k(t)\,dt\rightharpoonup \int_I\mathbf{x}(t)\,dt\quad\text{in }X\text{ as }k\to\infty. \label{eq:bochner-weak}
	\end{equation}
	
	Moreover, assume that \eqref{eq:bounded-bilinear} holds. Then for every measurable interval $I\subset[0,T]$,
	\begin{equation} \label{eq:Eliminf}
		\mathcal{B}\left(\int_I\mathbf{x}(t)\,dt,\int_I\mathbf{x}(t)\,dt\right)
		\le \liminf_{k\to\infty}\mathcal{B}\left(\int_I\mathbf{x}_k(t)\,dt,\int_I\mathbf{x}_k(t)\,dt\right).
	\end{equation}
\end{lemma}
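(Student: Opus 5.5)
The plan is to reduce the first claim to a duality statement: integrating against a fixed time interval is a bounded linear map from \(L^2(0,T;X)\) to \(X\), and bounded linear maps preserve weak convergence. The second claim then follows from Lemma~\ref{lem:lsc}.

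For \eqref{eq:bochner-weak}, fix a measurable interval \(I\subset[0,T]\) and consider
\[
S_I:L^2(0,T;X)\to X,\qquad S_I(\mathbf{y}):=\int_I\mathbf{y}(t)\,dt .
\]
This is well defined, since Bochner integrability on the finite-measure interval \(I\) follows from \(\int_I\|\mathbf{y}(t)\|_X\,dt\le |I|^{1/2}\|\mathbf{y}\|_{L^2(0,T;X)}\). By the Cauchy--Schwarz inequality, \(\|S_I\mathbf{y}\|_X\le |I|^{1/2}\|\mathbf{y}\|_{L^2(0,T;X)}\), so \(S_I\) is linear and bounded.

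Now take \(f\in X^*\). The functional \(\mathbf{y}\mapsto\langle f,S_I\mathbf{y}\rangle_X\) is a composition of bounded linear maps, hence an element of \((L^2(0,T;X))^*\). Weak convergence \(\mathbf{x}_k\rightharpoonup\mathbf{x}\) therefore gives \(\langle f,S_I\mathbf{x}_k\rangle_X\to\langle f,S_I\mathbf{x}\rangle_X\). Since \(f\) is arbitrary, this is exactly \eqref{eq:bochner-weak}.

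The same argument can be written concretely by commuting \(f\) with the Bochner integral, \(\langle f,\int_I\mathbf{y}\,dt\rangle_X=\int_I\langle f,\mathbf{y}(t)\rangle_X\,dt\), and observing that this equals the pairing of \(\mathbf{y}\) with \(t\mapsto\mathbf{1}_I(t)f\). That function is strongly measurable and lies in \(L^2(0,T;X^*)\), and it acts boundedly on \(L^2(0,T;X)\). This version has the advantage of not requiring any identification of the dual of \(L^2(0,T;X)\), so it works for a general Banach space \(X\), including the non-reflexive case.

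For \eqref{eq:Eliminf}, set \(y_k:=\int_I\mathbf{x}_k\,dt\) and \(y:=\int_I\mathbf{x}\,dt\). The first part gives \(y_k\rightharpoonup y\) in \(X\). Under \eqref{eq:bounded-bilinear}, Lemma~\ref{lem:lsc} applies directly and yields \(\mathcal{B}(y,y)\le\liminf_{k\to\infty}\mathcal{B}(y_k,y_k)\), which is \eqref{eq:Eliminf}.

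The only step needing care is the one justifying that \(\mathbf{y}\mapsto\langle f,S_I\mathbf{y}\rangle_X\) is a continuous functional on \(L^2(0,T;X)\), i.e.\ the boundedness of \(S_I\). Everything else is routine.
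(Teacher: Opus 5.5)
Your proposal is correct and essentially matches the paper's proof. The paper uses your ``concrete'' version: it moves $h\in X^*$ inside the Bochner integral, identifies the result with the pairing of $\mathbf{x}_k$ against $\mathbf{h}^*=h\mathbf{1}_I\in L^2(0,T;X^*)$, passes to the weak limit, and then invokes Lemma~\ref{lem:lsc} for \eqref{eq:Eliminf}, exactly as you do.
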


\begin{proof}
	Fix $h\in X^*$. We prove that
	\begin{equation}
		\lim_{k\to\infty}\left\langle h,\int_I\mathbf{x}_k(t)\,dt\right\rangle_X
		= \left\langle h,\int_I\mathbf{x}(t)\,dt\right\rangle_X. \label{eq:bochner-weak-h}
	\end{equation}
	By the Bochner's theorem (see, e.g., \cite[Theorem 8, Appendix E.5]{Evans.2010.749} or \cite[Theorem 23.9(a)]{Zeidler1990IIA}), for any $\mathbf{w}\in L^2(0,T;X)$,
	\begin{equation}
		\left\langle h,\int_I\mathbf{w}(t)\,dt\right\rangle_X=\int_I\langle h,\mathbf{w}(t)\rangle_X\,dt. \label{eq:bochner-h}
	\end{equation}
	Define \(\mathbf{h}^*\) by \(\mathbf{h}^*(t):=h\mathbf{1}_I(t)\). Then \(\mathbf{h}^*\in L^2(0,T;X^*)\) and
	\begin{equation}
		\int_I\langle h,\mathbf{w}(t)\rangle_X\,dt
		= \int_0^T\langle \mathbf{h}^*(t),\mathbf{w}(t)\rangle_X\,dt
		= \langle \mathbf{h}^*,\mathbf{w}\rangle_{L^2(0,T;X)}
		\quad\text{for all }\mathbf{w}\in L^2(0,T;X). \label{eq:bochner-hstar}
	\end{equation}
	Taking $\mathbf{w}=\mathbf{x}_k$ in \eqref{eq:bochner-h} and \eqref{eq:bochner-hstar}, we obtain
	\[
	\left\langle h,\int_I\mathbf{x}_k(t)\,dt\right\rangle_X
	= \langle \mathbf{h}^*,\mathbf{x}_k\rangle_{L^2(0,T;X)}.
	\]
	Letting $k\to\infty$ and using $\mathbf{x}_k\rightharpoonup \mathbf{x}$ in $L^2(0,T;X)$, we get
	\[
	\lim_{k\to\infty}\left\langle h,\int_I\mathbf{x}_k(t)\,dt\right\rangle_X
	= \langle \mathbf{h}^*,\mathbf{x}\rangle_{L^2(0,T;X)}
	= \left\langle h,\int_I\mathbf{x}(t)\,dt\right\rangle_X,
	\]
	which proves \eqref{eq:bochner-weak-h}. Since $h\in X^*$ was arbitrary, \eqref{eq:bochner-weak} follows.
	
	Finally, applying \eqref{eq:bochner-weak} to Lemma~\ref{lem:lsc}, we obtain \eqref{eq:Eliminf}.
\end{proof}

The final lemma provides a Fubini-type rule for interchanging $\mathcal{B}$ with a double time integral.

\begin{lemma}[Fubini identity for $\mathcal{B}$]
	\label{lem:fubini}
	Assume \eqref{eq:bounded-bilinear}. For any $\mathbf{x}\in L^2(0,T;X)$ and any measurable intervals $I,J\subset[0,T]$, there holds
	\begin{equation}
		\mathcal{B}\left(\int_I\mathbf{x}(t)\,dt,\int_J\mathbf{x}(s)\,ds\right)
		= \int_I\mathcal{B}\left(\mathbf{x}(t),\int_J\mathbf{x}(s)\,ds\right)dt
		= \int_I\int_J\mathcal{B}(\mathbf{x}(t),\mathbf{x}(s))\,ds\,dt.
		\label{eq:fubini}
	\end{equation}
\end{lemma}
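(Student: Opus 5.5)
The plan is to reduce both equalities to the commutation of bounded linear functionals with the Bochner integral, as in \eqref{eq:bochner-h}, and to use \eqref{eq:pullback} to turn $\mathcal{B}$ into such a functional.

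\textbf{First step: well-definedness.} Since $\mathbf{x}\in L^2(0,T;X)$ and $[0,T]$ has finite length, $\mathbf{x}$ is Bochner integrable on any measurable interval. Hence $y:=\int_J\mathbf{x}(s)\,ds$ and $\int_I\mathbf{x}(t)\,dt$ are well-defined elements of $X$.

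\textbf{First equality.} Fix $y=\int_J\mathbf{x}(s)\,ds$. By \eqref{eq:bounded-bilinear}, $h:=\mathcal{B}(\cdot,y)$ belongs to $X^*$. Then \eqref{eq:pullback} followed by \eqref{eq:bochner-h} gives
\[
\mathcal{B}\Bigl(\int_I\mathbf{x}(t)\,dt,y\Bigr)=\Bigl\langle h,\int_I\mathbf{x}(t)\,dt\Bigr\rangle_X=\int_I\langle h,\mathbf{x}(t)\rangle_X\,dt=\int_I\mathcal{B}(\mathbf{x}(t),y)\,dt.
\]
The integrand is measurable and integrable, since it is a bounded functional composed with a Bochner-measurable function, and its modulus is at most $C\|y\|_X\|\mathbf{x}(t)\|_X\in L^1(I)$.

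\textbf{Second equality.} Fix $t$ such that $\mathbf{x}(t)\in X$. By symmetry of $\mathcal{B}$, the functional $g_t:=\mathcal{B}(\mathbf{x}(t),\cdot)=\mathcal{B}(\cdot,\mathbf{x}(t))$ lies in $X^*$. The same argument as above yields
\[
\mathcal{B}\Bigl(\mathbf{x}(t),\int_J\mathbf{x}(s)\,ds\Bigr)=\int_J\mathcal{B}(\mathbf{x}(t),\mathbf{x}(s))\,ds
\]
for a.e. $t\in I$. Integrating this identity over $I$ gives the iterated integral in \eqref{eq:fubini}.

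\textbf{Main obstacle.} The only delicate point is justifying that the iterated integral makes sense, i.e. that $(t,s)\mapsto\mathcal{B}(\mathbf{x}(t),\mathbf{x}(s))$ is jointly measurable. The plan is to approximate $\mathbf{x}$ by simple functions $\mathbf{x}_n\to\mathbf{x}$ a.e. in $X$. The function $(t,s)\mapsto\mathcal{B}(\mathbf{x}_n(t),\mathbf{x}_n(s))$ is simple on $I\times J$. By continuity of $\mathcal{B}$, which follows from \eqref{eq:bounded-bilinear}, these simple functions converge a.e. on $I\times J$ to $\mathcal{B}(\mathbf{x}(t),\mathbf{x}(s))$, so the limit is jointly measurable.

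The bound
\[
|\mathcal{B}(\mathbf{x}(t),\mathbf{x}(s))|\le C\|\mathbf{x}(t)\|_X\|\mathbf{x}(s)\|_X
\]
shows the function is integrable on $I\times J$, because $\|\mathbf{x}(\cdot)\|_X\in L^2\subset L^1$ on bounded intervals. The iterated integral can then be read, by Fubini, as the double integral $\int_I\int_J\,ds\,dt$ in either order.
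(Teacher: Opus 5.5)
Your proposal is correct and follows the paper's proof: you get the first equality from \eqref{eq:pullback} by commuting the bounded functional $\mathcal{B}(\cdot,\int_J\mathbf{x}(s)\,ds)$ with the Bochner integral, and the second from the symmetry of $\mathcal{B}$ and the same commutation for each fixed $t$, followed by integration over $I$. Your extra joint-measurability argument is strictly speaking not needed for the iterated integral in \eqref{eq:fubini}, since by the first step the inner integral already coincides with a measurable, integrable function of $t$; it is, however, what justifies the later interchange of the order of integration via Fubini's theorem (e.g.\ in Proposition~\ref{prop:liminf}), which the paper leaves implicit.
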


\begin{proof}
By \eqref{eq:pullback} and the Bochner theorem applied to the bounded linear functional \(\mathcal{B}\left(\cdot,\int_J\mathbf{x}(s)\,ds\right)\), we have
\begin{align}
	\mathcal{B}\left(\int_I\mathbf{x}(t)\,dt,\int_J\mathbf{x}(s)\,ds\right)
	&= \left\langle \mathcal{B}\left(\cdot,\int_J\mathbf{x}(s)\,ds\right),\int_I\mathbf{x}(t)\,dt\right\rangle_X \notag \\
	&= \int_I\mathcal{B}\left(\mathbf{x}(t),\int_J\mathbf{x}(s)\,ds\right)dt. \label{eq:fubini-first}
\end{align}
Similarly, for each fixed \(t\in I\), it follows from the symmetry of \(\mathcal{B}\) and the Bochner theorem that
\begin{equation}
	\mathcal{B}\left(\mathbf{x}(t),\int_J\mathbf{x}(s)\,ds\right)
	= \mathcal{B}\left(\int_J\mathbf{x}(s)\,ds,\mathbf{x}(t)\right)
	= \int_J\mathcal{B}(\mathbf{x}(s),\mathbf{x}(t))\,ds
	= \int_J\mathcal{B}(\mathbf{x}(t),\mathbf{x}(s))\,ds. \label{eq:fubini-second}
\end{equation}
Integrating both sides of \eqref{eq:fubini-second} with respect to \(t\) on \(I\), we finally obtain \eqref{eq:fubini}. This completes the proof.
\end{proof}

The three lemmas above provide the commutation rules between weak limits, $\mathcal{B}$ (and hence $\mathcal{E}$) and time integrations, which will be essential in the Minty trick argument of Section~\ref{Sec:Para}. Their proofs rely only on \eqref{eq:bounded-bilinear} and are independent of the particular structure of $V^q$.

For the rest of the paper, whenever $m>0$ is fixed, we write
\begin{equation} \label{eq:V-special}
	V:=V^{1+1/m}\qquad\text{and}\qquad \|\cdot\|_V:=\|\cdot\|_{V^{1+1/m}}.
\end{equation}
Since $1+1/m\in(1,\infty)$ for $m\in(0,\infty)$, it follows from Corollary~\ref{cor:Vq_reflexive} that $V$ is a reflexive Banach space. Both the reflexivity and the completeness of $V$ will be used in the subsequent sections when applying the Minty--Browder theorem.

\section{Well-posedness of the time-discrete elliptic problem}
\label{Sec:Epi}

In this section, we study the elliptic problem obtained by time-discretization of the evolution equation \(\partial_t u = \mathcal{L}(\Psi(u))\). The results established here will be used in Section~\ref{Sec:Para} to construct approximate solutions via the Rothe method and to derive the uniform estimates needed for the passage to the limit.

Let \(m>0\) and \(T>0\) be fixed, and let \(n\in\mathbb{N}\). A backward Euler discretization of the equation on \([0,T)\) leads to the following recursive problem: given \(u_{k-1}\in L^{m+1}(M,\mu)\), find \(u_k\) such that
\[
\frac{u_k-u_{k-1}}{T/n} = \mathcal{L}(\Psi(u_k)).
\]
Equivalently,
\begin{equation}
	u_k - \frac{T}{n}\mathcal{L}(\Psi(u_k)) = u_{k-1}. \label{eq:discrete-u}
\end{equation}
This is a nonlinear elliptic equation of the form
\begin{equation}
	u - \tau \mathcal{L}(\Psi(u)) = g, \label{eq:elliptic-u}
\end{equation}
where \(u:=u_k\) is the unknown, \(g:=u_{k-1}\) is the data from the previous time step, and \(\tau:=T/n\) is the time step. Setting
\[
v:=\Psi(u)=|u|^{m-1}u,
\]
we have \(u=\Psi^{-1}(v)=|v|^{1/m}\operatorname{sgn}(v)\), and \eqref{eq:elliptic-u} becomes
\begin{equation}
	\Psi^{-1}(v) - \tau \mathcal{L}(v) = g. \label{eq:elliptic-v}
\end{equation}

We shall study \eqref{eq:elliptic-v} in the reflexive Banach space \(V:=V^{1+1/m}\) defined in \eqref{eq:V-special}.

\subsection{Weak formulation and the Minty--Browder theorem}

We begin with the notion of weak solution to \eqref{eq:elliptic-v}.

\begin{definition}
	\label{def:elliptic_weak}
	Let \(\tau>0\) and \(m>0\) be given, and let \(g\in V^*\). A function \(v\in V\) is called a \emph{weak solution} of \eqref{eq:elliptic-v} if
	\begin{equation}
		\int_M \Psi^{-1}(v)\,\phi\,d\mu + \tau\,\mathcal{E}(v,\phi) = \langle g,\phi\rangle_V \label{eq:elliptic-weak}
	\end{equation}
	for every \(\phi\in V\).
\end{definition}

The Minty--Browder theorem will be our main tool for proving existence and uniqueness. We recall the following standard result (see \cite[Theorem 26.A]{Zeidler1990IIB}). We note that, in the version stated below, separability of \(X\) is not required.

\begin{theorem}[Minty--Browder theorem]
	\label{thm:minty-browder}
	Let \(X\) be a reflexive real Banach space, and let \(A:X\to X^*\) be an operator satisfying:
	
	\begin{enumerate}
		\item[(i)] \emph{Monotonicity:} for all \(v,w\in X\),
		\[
		\langle Av-Aw, v-w\rangle_X \ge 0;
		\]
		
		\item[(ii)] \emph{Hemicontinuity:} for all \(v,w,h\in X\), the map
		\[
		t \mapsto \langle A(v+tw), h\rangle_X
		\]
		is continuous on \([0,1]\);
		
		\item[(iii)] \emph{Coercivity:}
		\[
		\lim_{\|v\|_X\to\infty} \frac{\langle Av,v\rangle_X}{\|v\|_X} = \infty.\]
	\end{enumerate}
	
	Then, for every \(g\in X^*\), there exists \(v_0\in X\) such that \(Av_0=g\). If \(A\) is strictly monotone (i.e., \(\langle Av-Aw, v-w\rangle_X=0\) implies \(v=w\)), then \(v_0\) is unique.
\end{theorem}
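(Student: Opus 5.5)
The statement just given is the Minty--Browder theorem, quoted from \cite[Theorem 26.A]{Zeidler1990IIB}. I would not reprove it from scratch in the paper, but the standard argument I would follow goes as follows.

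\textbf{Step 1: finite-dimensional approximation.} By replacing \(A\) with \(A-g\) it suffices to solve \(Av=0\). Since separability is not assumed, I would run a Galerkin scheme over the directed family of finite-dimensional subspaces \(F\subset X\), not over a single increasing sequence. On each \(F\), consider the map \(A_F:F\to F^*\) obtained by restricting the functional \(Av\) to \(F\). Hemicontinuity together with monotonicity makes \(A_F\) continuous on the finite-dimensional space \(F\), since monotone hemicontinuous maps are demicontinuous. Coercivity gives \(\langle A_F v,v\rangle>0\) on a sphere \(\|v\|=R\), with \(R\) independent of \(F\). The acute-angle form of Brouwer's fixed point theorem then produces \(v_F\in F\) with \(\|v_F\|\le R\) and \(\langle Av_F,\phi\rangle=0\) for all \(\phi\in F\).

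\textbf{Step 2: weak compactness and Minty's trick.} By reflexivity, the closed ball of radius \(R\) is weakly compact. For each \(F\), let \(W_F\) be the weak closure of \(\{v_{F'}:F'\supset F\}\). These sets have the finite intersection property, so there is a point \(v_0\) lying in all of them. For a fixed \(w\in X\) and any \(F\) containing \(w\), monotonicity gives
\[
\langle Aw,\,w-v_{F'}\rangle \ge \langle Av_{F'},\,w-v_{F'}\rangle = 0 \qquad \text{for all } F'\supset F,
\]
where the equality holds because \(w-v_{F'}\in F'\). The left-hand side is weakly continuous in \(v_{F'}\), so the inequality passes to \(v_0\) and yields \(\langle Aw,w-v_0\rangle\ge0\) for every \(w\in X\).

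Now take \(w=v_0+t h\) with \(t>0\). Dividing by \(t\) gives \(\langle A(v_0+th),h\rangle\ge0\), and letting \(t\to0^+\) and using hemicontinuity gives \(\langle Av_0,h\rangle\ge0\) for every \(h\in X\). Replacing \(h\) by \(-h\) shows \(Av_0=0\).

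\textbf{Step 3: uniqueness.} If \(Av_0=Av_1=g\), then \(\langle Av_0-Av_1,v_0-v_1\rangle=0\), and strict monotonicity gives \(v_0=v_1\).

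\textbf{Main obstacle.} I expect the delicate step to be the passage to the limit without separability, that is, the net/finite-intersection argument in Step 2. It works because the Minty inequality only needs to be tested against a single \(w\) at a time, and every such \(w\) lies in some \(F\).
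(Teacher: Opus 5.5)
Your proposal is correct. The paper gives no proof of this theorem; it only cites Zeidler's Theorem 26.A, and your sketch is the standard Galerkin--Brouwer--Minty-trick argument behind that citation: the reduction to $Av=0$, the $F$-independent bound $\|v_F\|\le R$, the weakly closed half-space argument giving $\langle Aw,w-v_0\rangle\ge0$, and the hemicontinuity limit are all sound. Your directed-family, finite-intersection passage to the limit is worth keeping, because it justifies the paper's remark that separability is not needed; the usual textbook Galerkin scheme runs along an increasing sequence of subspaces and so presupposes a countable dense set.
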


We now apply the theorem to the operator associated with \eqref{eq:elliptic-v}.

\begin{lemma}
	\label{lem:elliptic_existence}
	Let \(\tau>0\) and \(m>0\) be given. Then, for every \(g\in V^*\), the equation \eqref{eq:elliptic-v} admits a unique weak solution \(v\in V\).
\end{lemma}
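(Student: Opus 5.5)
We will apply the Minty--Browder theorem (Theorem~\ref{thm:minty-browder}) on the reflexive Banach space \(X=V=V^{1+1/m}\) (Corollary~\ref{cor:Vq_reflexive}), with the operator \(A:V\to V^*\) defined by
\[
\langle Av,\phi\rangle_V:=\int_M \Psi^{-1}(v)\,\phi\,d\mu+\tau\,\mathcal{E}(v,\phi),\qquad v,\phi\in V.
\]
Then \eqref{eq:elliptic-weak} reads \(Av=g\), so existence and uniqueness follow once we verify that \(A\) is well defined and satisfies (i)--(iii), with strict monotonicity.

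\textbf{Well-definedness.} Set \(q:=1+1/m\) and \(q'=m+1\). For \(v\in L^q\) we have \(|\Psi^{-1}(v)|^{m+1}=|v|^{(m+1)/m}=|v|^q\). Hence \(\Psi^{-1}(v)\in L^{q'}\) with \(\|\Psi^{-1}(v)\|_{L^{q'}}=\|v\|_{L^q}^{1/m}\). By H\"older's inequality,
\[
\Bigl|\int_M\Psi^{-1}(v)\phi\,d\mu\Bigr|\le\|v\|_{L^q}^{1/m}\|\phi\|_{L^q}.
\]
By Cauchy--Schwarz for \(\mathcal{E}\) on \(\mathcal{F}_e\), \(|\mathcal{E}(v,\phi)|\le\sqrt{\mathcal{E}(v)\mathcal{E}(\phi)}\). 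Together these give \(\|Av\|_{V^*}\le\|v\|_V^{1/m}+\tau\|v\|_V\), so \(A\) maps \(V\) into \(V^*\).

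\textbf{Monotonicity.} We have
\[
\langle Av-Aw,v-w\rangle_V=\int_M(\Psi^{-1}(v)-\Psi^{-1}(w))(v-w)\,d\mu+\tau\,\mathcal{E}(v-w).
\]
Both terms are nonnegative, since \(\Psi^{-1}\) is strictly increasing pointwise. If the sum vanishes, the integrand of the first term is nonnegative with zero integral, so it vanishes \(\mu\)-a.e. Strict monotonicity of \(\Psi^{-1}\) then forces \(v=w\) \(\mu\)-a.e. This gives strict monotonicity of \(A\).

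\textbf{Hemicontinuity.} The map \(t\mapsto\tau\mathcal{E}(v+tw,h)\) is affine, hence continuous. For the nonlinear term, the integrand \(\Psi^{-1}(v+tw)h\) converges pointwise as \(t\) varies, by continuity of \(\Psi^{-1}\). For \(t\in[0,1]\) it is dominated by \((|v|+|w|)^{1/m}|h|\), which lies in \(L^1\) by H\"older's inequality, since \((|v|+|w|)^{1/m}\in L^{m+1}\). Dominated convergence then gives continuity.

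\textbf{Coercivity.} This is the step needing the most care, because the two parts of the norm scale differently. We have
\[
\langle Av,v\rangle_V=\|v\|_{L^q}^q+\tau\,\mathcal{E}(v),
\]
and we must show this is superlinear in \(\|v\|_V=(\|v\|_{L^q}^2+\mathcal{E}(v))^{1/2}\). Write \(a=\|v\|_{L^q}\) and \(e=\mathcal{E}(v)\), and let \(R:=\|v\|_V\to\infty\). Then either \(e\ge R^2/2\) or \(a^2\ge R^2/2\).
\begin{itemize}
\item In the first case, \(\langle Av,v\rangle_V\ge\tau R^2/2\).
\item In the second case, \(\langle Av,v\rangle_V\ge(R/\sqrt2)^q\), with \(q>1\).
\end{itemize}
Hence
\[
\langle Av,v\rangle_V/R\ge\min\{\tau R/2,\;2^{-q/2}R^{q-1}\}\to\infty .
\]
The point requiring care is that \(q-1=1/m\) may be small when \(m\) is large, but it is still positive, so superlinearity holds for every \(m>0\).

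Theorem~\ref{thm:minty-browder} then yields a unique \(v\in V\) with \(Av=g\), which is exactly the unique weak solution of \eqref{eq:elliptic-v}.
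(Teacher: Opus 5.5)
Your proof is correct and follows essentially the same route as the paper: Minty--Browder on $V$ with the same operator $A$, the same pointwise strict-monotonicity and dominated-convergence hemicontinuity arguments, and the same two-case split for coercivity. Your split ($\mathcal{E}(v)\ge R^2/2$ versus $a^2\ge R^2/2$) is the paper's $a\ge b$ versus $a<b$ in different notation. The one addition is that you explicitly check that $A$ maps $V$ into $V^*$ via H\"older and $\|\Psi^{-1}(v)\|_{L^{m+1}}=\|v\|_{L^{1+1/m}}^{1/m}$, which the paper only asserts.
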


\begin{proof}
	Define the operator \(A:V\to V^*\) by
	\begin{equation}
		\langle Av,\phi\rangle_V := \int_M \Psi^{-1}(v)\,\phi\,d\mu + \tau\,\mathcal{E}(v,\phi) \label{eq:A-def}
	\end{equation}
	for \(v,\phi\in V\).
	
	The operator \(A\) is well-defined and bounded. Moreover, a weak solution of \eqref{eq:elliptic-v} is precisely a solution of \(Av=g\). We now verify that \(A\) satisfies the hypotheses of Theorem~\ref{thm:minty-browder}.
	
	\medskip
	
	\noindent\textbf{Strict monotonicity.} For \(v,w\in V\), we have
	\begin{align}
		\langle Av-Aw, v-w\rangle_V
		&= \int_M (\Psi^{-1}(v)-\Psi^{-1}(w))(v-w)\,d\mu + \tau\,\mathcal{E}(v-w,v-w) \notag \\
		&\ge \int_M (\Psi^{-1}(v)-\Psi^{-1}(w))(v-w)\,d\mu \ge 0, \label{eq:mono}
	\end{align}
	where the last inequality follows from the strict monotonicity of the function \(\Psi^{-1}(s)=|s|^{1/m}\operatorname{sgn}(s)\). Moreover, equality in \eqref{eq:mono} holds only if \(v=w\) \(\mu\)-a.e. Hence \(A\) is strictly monotone.
	
	\medskip
	
	\noindent\textbf{Hemicontinuity.} Fix \(v,w,\phi\in V\) and define
	\[
	F(t):=\langle A(v+tw),\phi\rangle_V
	= \int_M \Psi^{-1}(v+tw)\,\phi\,d\mu + \tau\,\mathcal{E}(v+tw,\phi), \qquad t\in[0,1].
	\]
	The second term is linear in \(t\), hence continuous. For the first term, note that for all \(t\in[-2,2]\),
	\[
	\left|\Psi^{-1}(v+tw)\,\phi\right| = |v+tw|^{1/m}|\phi| \le (|v|+2|w|)^{1/m}|\phi|,
	\]
	and the right-hand side is integrable by Hölder's inequality. Therefore, by the dominated convergence theorem and the continuity of \(\Psi^{-1}\), for any \(t_0\in[0,1]\),
	\[
	\lim_{t\to t_0}\int_M \Psi^{-1}(v+tw)\,\phi\,d\mu
	= \int_M \Psi^{-1}(v+t_0w)\,\phi\,d\mu.
	\]
	Thus \(F\) is continuous on \([0,1]\), proving hemicontinuity.
	
	\medskip
	
	\noindent\textbf{Coercivity.} For \(v\in V\), set \(a:=\|v\|_{L^{1+1/m}}\) and \(b:=\sqrt{\mathcal{E}(v)}\). Then \(\|v\|_V=\sqrt{a^2+b^2}\), and
	\[
	\frac{\langle Av,v\rangle_V}{\|v\|_V}
	= \frac{\|v\|_{L^{1+1/m}}^{1+1/m} + \tau\,\mathcal{E}(v)}{\sqrt{\|v\|_{L^{1+1/m}}^2 + \mathcal{E}(v)}}
	= \frac{a^{1+1/m} + \tau b^2}{\sqrt{a^2+b^2}}.
	\]
	We claim that
	\[
	\frac{a^{1+1/m} + \tau b^2}{\sqrt{a^2+b^2}}
	\ge
	\begin{cases}
		2^{-\frac{1+1/m}{2}}\,\left(\sqrt{a^2+b^2}\right)^{1/m}, & a\ge b,\\[4pt]
		\dfrac{\tau}{2}\,\sqrt{a^2+b^2}, & a<b.
	\end{cases}
	\]
	Indeed, if \(a\ge b\), then \(a^2\ge (a^2+b^2)/2\), so
	\[
	\frac{a^{1+1/m} + \tau b^2}{\sqrt{a^2+b^2}}
	\ge \frac{a^{1+1/m}}{\sqrt{a^2+b^2}}
	\ge \frac{((a^2+b^2)/2)^{\frac{1+1/m}{2}}}{\sqrt{a^2+b^2}}
	= 2^{-\frac{1+1/m}{2}}\left(\sqrt{a^2+b^2}\right)^{1/m}.
	\]
	If \(a<b\), then \(b^2\ge (a^2+b^2)/2\), so
	\[
	\frac{a^{1+1/m} + \tau b^2}{\sqrt{a^2+b^2}}
	\ge \frac{\tau b^2}{\sqrt{a^2+b^2}}
	\ge \frac{\tau}{2}\sqrt{a^2+b^2}.
	\]
	Consequently, for \(\|v\|_V\ge 1\),
	\[
	\frac{\langle Av,v\rangle_V}{\|v\|_V}
	\ge \min\left(2^{-\frac{1+1/m}{2}},\, \frac{\tau}{2}\right)
	\|v\|_V^{\min(1/m,\,1)},
	\]
	which tends to \(\infty\) as \(\|v\|_V\to\infty\). Thus \(A\) is coercive.
	
	All hypotheses of the Minty--Browder theorem are satisfied. Hence there exists a unique \(v\in V\) such that \(Av=g\). By Definition~\ref{def:elliptic_weak}, this \(v\) is the unique weak solution of \eqref{eq:elliptic-v}.
\end{proof}

\subsection{A priori estimates}

We now focus on the case where the datum \(g\in L^{m+1}(M,\mu)\). Since \(V\hookrightarrow L^{m+1}(M,\mu)\) continuously, the natural embedding \(L^{m+1}(M,\mu)\hookrightarrow V^*\) given by
\[
\langle g,\phi\rangle_V = \int_M g\,\phi\,d\mu \qquad \text{for all } \phi\in V
\]
is also continuous.

We first establish the comparison principle for the elliptic problem.

\begin{proposition}[Comparison principle for the elliptic problem]
	\label{prop:elliptic-comparison}
	Let \(g_1,g_2\in L^{m+1}(M,\mu)\) with \(g_1\le g_2\) \(\mu\)-a.e., and for \(i=1,2\), let \(v_i\in V\) be the corresponding weak solution of \eqref{eq:elliptic-v} with datum \(g=g_i\). Then \(v_1\le v_2\) \(\mu\)-a.e.
\end{proposition}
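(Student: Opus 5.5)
The plan is to test the difference of the two weak formulations \eqref{eq:elliptic-weak} with $\phi:=(v_1-v_2)_+$. Subtracting the identities for $i=1,2$ gives
\begin{equation*}
\int_M \bigl(\Psi^{-1}(v_1)-\Psi^{-1}(v_2)\bigr)(v_1-v_2)_+\,d\mu+\tau\,\mathcal{E}\bigl(v_1-v_2,(v_1-v_2)_+\bigr)=\int_M (g_1-g_2)(v_1-v_2)_+\,d\mu\le 0,
\end{equation*}
where the right-hand side is nonpositive because $g_1\le g_2$ and $(v_1-v_2)_+\ge0$.

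\textbf{Step 1: admissibility of the test function.} We must check that $w_+\in V$ whenever $w:=v_1-v_2\in V$. Integrability is clear, since $|w_+|\le|w|$ gives $w_+\in L^{1+1/m}$. For the energy part, we use that $\mathcal{F}_e$ is stable under normal contractions, with $\mathcal{E}(w_+)\le\mathcal{E}(w)$. This is standard (see \cite[Theorem 1.5.2 / Section 1.5]{FukushimaOshimaTakeda.2011.489}). If one prefers to derive it from what is stated in the paper, take an approximating sequence $w_k\in\mathcal{F}$ of $w$. Then $(w_k)_+\in\mathcal{F}$ with $\mathcal{E}((w_k)_+)\le\mathcal{E}(w_k)$ by the Markov property, and $(w_k)_+\to w_+$ a.e. The sequence $(w_k)_+$ is bounded in energy but need not be $\mathcal{E}$-Cauchy, so we pass to Cesàro means via Banach--Saks in the Hilbert completion to get an $\mathcal{E}$-Cauchy sequence that still converges a.e. Lemma~\ref{lem:Fe_closed} then gives $w_+\in\mathcal{F}_e$. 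This is the main technical point, and I would simply cite Fukushima--Oshima--Takeda for it.

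\textbf{Step 2: sign of the energy term.} We show $\mathcal{E}(w,w_+)\ge0$. Write $w=w_+-w_-$, so that $\mathcal{E}(w,w_+)=\mathcal{E}(w_+)-\mathcal{E}(w_-,w_+)$. It suffices that $\mathcal{E}(w_+,w_-)\le0$. This follows from the contraction property: $|w|=w_++w_-$ is a normal contraction of $w$, so $\mathcal{E}(|w|)\le\mathcal{E}(w)$. Expanding both sides gives
\begin{equation*}
\mathcal{E}(w_+)+\mathcal{E}(w_-)+2\mathcal{E}(w_+,w_-)\le\mathcal{E}(w_+)+\mathcal{E}(w_-)-2\mathcal{E}(w_+,w_-),
\end{equation*}
hence $\mathcal{E}(w_+,w_-)\le0$. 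This holds on $\mathcal{F}_e$ by the same extension argument as in Step 1, or directly by citation.

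\textbf{Step 3: conclusion.} Both terms on the left-hand side are nonnegative: the first because $\Psi^{-1}$ is increasing, the second by Step 2. Since their sum is at most $0$, we get
\begin{equation*}
\int_M(\Psi^{-1}(v_1)-\Psi^{-1}(v_2))(v_1-v_2)_+\,d\mu=0.
\end{equation*}
The integrand is nonnegative and strictly positive on $\{v_1>v_2\}$, because $\Psi^{-1}$ is strictly increasing. Hence $\mu(\{v_1>v_2\})=0$, that is, $v_1\le v_2$ $\mu$-a.e.
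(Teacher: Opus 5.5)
Your proposal is correct and follows the paper's proof essentially step by step. Both arguments test the subtracted weak formulations with $w_+=(v_1-v_2)_+$ and justify $w_+\in\mathcal{F}_e$ by the normal-contraction property of the extended Dirichlet space (the paper cites \cite[Corollary 1.6.3]{FukushimaOshimaTakeda.2011.489}). Both then obtain $\mathcal{E}(w_+,w_-)\le 0$ by expanding $\mathcal{E}(|w|)\le\mathcal{E}(w)$ and conclude via the strict monotonicity of $\Psi^{-1}$. Your optional Banach--Saks/Ces\`aro-mean derivation of the contraction property on $\mathcal{F}_e$ is a valid self-contained substitute for that citation.
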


\begin{proof}
	Set \(w:=v_1-v_2\). Applying the weak formulation \eqref{eq:elliptic-weak} to \(v_1\) and \(v_2\) with the same test function \(\phi\in V\), and subtracting, yields
	\begin{equation}
		\int_M (\Psi^{-1}(v_1)-\Psi^{-1}(v_2))\,\phi\,d\mu + \tau\,\mathcal{E}(w,\phi)
		= \int_M (g_1-g_2)\,\phi\,d\mu \qquad \forall \phi\in V.
		\label{eq:comp-weak-general}
	\end{equation}
	It suffices to show that \(w_+=0\) \(\mu\)-a.e.
	
	Since \(w\in V=L^{1+1/m}\cap\mathcal{F}_e\) and \(t\mapsto t_+\) is a normal contraction, \cite[Corollary 1.6.3]{FukushimaOshimaTakeda.2011.489} yields \(w_+\in\mathcal{F}_e\). Moreover, \(0\le w_+\le |w|\) gives \(w_+\in L^{1+1/m}\), hence \(w_+\in V\).
	
	Take \(\phi=w_+\) in \eqref{eq:comp-weak-general}. Since \(g_1-g_2\le0\) and \(w_+\ge0\), we obtain
	\begin{equation}
		\int_M (\Psi^{-1}(v_1)-\Psi^{-1}(v_2))w_+\,d\mu + \tau\,\mathcal{E}(w,w_+) \le 0.
		\label{eq:comp-weak}
	\end{equation}
	
	We claim that the left-hand side of \eqref{eq:comp-weak} is non-negative. Indeed, since \(\Psi^{-1}\) is strictly increasing,
	\[
	(\Psi^{-1}(v_1)-\Psi^{-1}(v_2))w_+ \ge 0 \quad \mu\text{-a.e.},
	\]
	so the integral is non-negative. Moreover, using \(w=w_+-w_-\) and \(|w|=w_+ + w_-\), we have
	\[
	\mathcal{E}(w)=\mathcal{E}(w_+)+\mathcal{E}(w_-)-2\mathcal{E}(w_+,w_-),
	\]
	and
	\[
	\mathcal{E}(|w|)=\mathcal{E}(w_+)+\mathcal{E}(w_-)+2\mathcal{E}(w_+,w_-).
	\]
	The Markovian property gives \(\mathcal{E}(|w|)\le\mathcal{E}(w)\); comparing the two expressions yields
	\[
	\mathcal{E}(w_+,w_-)\le0.
	\]
	Therefore,
	\[
	\mathcal{E}(w,w_+)=\mathcal{E}(w_+-w_-,w_+)=\mathcal{E}(w_+)-\mathcal{E}(w_+,w_-)\ge0.
	\]
	Thus both terms on the left-hand side of \eqref{eq:comp-weak} are non-negative, while the right-hand side is non-positive. Hence equality must hold, and in particular
	\[
	\int_M (\Psi^{-1}(v_1)-\Psi^{-1}(v_2))w_+\,d\mu = 0.
	\]
	By the strict monotonicity of \(\Psi^{-1}\), this implies \(w_+=0\) \(\mu\)-a.e., i.e., \(v_1\le v_2\).
\end{proof}

\begin{corollary}[Nonnegativity preservation]
	\label{prop:nonnega}
	Let \(g\in L^{m+1}(M,\mu)\) with \(g\ge0\) \(\mu\)-a.e., and let \(v\in V\) be the weak solution of \eqref{eq:elliptic-v} with datum \(g\). Then \(v\ge0\) \(\mu\)-a.e.
\end{corollary}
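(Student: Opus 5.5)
This is a direct consequence of Proposition~\ref{prop:elliptic-comparison}, applied with the pair of data \(g_1:=0\) and \(g_2:=g\). Both lie in \(L^{m+1}(M,\mu)\), and \(g_1\le g_2\) holds \(\mu\)-a.e. by assumption.

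The first step is to identify the solution with datum \(g_1=0\). By Lemma~\ref{lem:elliptic_existence}, this weak solution is unique, so it suffices to exhibit one. The candidate is \(v_1=0\in V\). Indeed, \(\Psi^{-1}(0)=0\), and \(\mathcal{E}(0,\phi)=0\) for every \(\phi\in V\). Hence both sides of \eqref{eq:elliptic-weak} vanish identically, and \(v_1=0\) is the weak solution.

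Next, let \(v_2:=v\) be the weak solution with datum \(g\); it exists and is unique by Lemma~\ref{lem:elliptic_existence}, since \(g\in L^{m+1}\hookrightarrow V^*\). Proposition~\ref{prop:elliptic-comparison} then gives \(0=v_1\le v_2=v\) \(\mu\)-a.e., which is the claim.

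No step presents a genuine obstacle. The only point to check is that the zero function is admissible and is the solution for zero datum. This rests on the uniqueness part of Lemma~\ref{lem:elliptic_existence}, which guarantees that the comparison is made with the correct solution.
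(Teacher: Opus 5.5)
Your proof is correct and follows the paper's route: identify the zero-datum solution as $0$ and apply Proposition~\ref{prop:elliptic-comparison}. The differences are minor. The paper shows the zero-datum solution vanishes by testing with $\phi=v_0$, so it needs no appeal to uniqueness, whereas you verify directly that $0$ is a solution and invoke uniqueness from Lemma~\ref{lem:elliptic_existence}. Also, your assignment $g_1=0$, $g_2=g$ is the correct orientation, while the paper's text has the roles of $g_1$ and $g_2$ swapped.
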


\begin{proof}
	Let \(v_0\) be the weak solution of \eqref{eq:elliptic-v} with datum \(0\). Taking \(\phi=v_0\) in \eqref{eq:elliptic-weak}, we have
	\[
	\int_M \Psi^{-1}(v_0)v_0\,d\mu + \tau\,\mathcal{E}(v_0) = 0,
	\]
	which implies \(v_0=0\) \(\mu\)-a.e., since \(\int_M \Psi^{-1}(v_0)v_0\,d\mu = \|v_0\|_{L^{1+1/m}}^{1+1/m}\ge0\) and \(\mathcal{E}(v_0)\ge0\).
	
	Applying Proposition~\ref{prop:elliptic-comparison} with \(g_1=g\) and \(g_2=0\), we obtain \(v\ge v_0=0\) \(\mu\)-a.e.
\end{proof}

We now derive the energy estimate which will be essential for Section~\ref{Sec:Para}.

\begin{lemma}
	\label{lem:elliptic_estimates}
	Let \(g\in L^{m+1}(M,\mu)\) and let \(v\in V\) be the weak solution of \eqref{eq:elliptic-v} with datum $g$. Then
	\begin{equation}
		\|v\|_{L^{1+1/m}}^{1+1/m} + \tau(m+1)\mathcal{E}(v) \le \|g\|_{L^{m+1}}^{m+1}.
		\label{eq:elliptic-estimate}
	\end{equation}
\end{lemma}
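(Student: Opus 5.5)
Test the weak formulation \eqref{eq:elliptic-weak} with \(\phi=v\in V\), which is admissible, and bound the right-hand side by a Hölder plus Young argument. Since \(\Psi^{-1}(v)v=|v|^{1+1/m}\), the test gives
\[
\|v\|_{L^{1+1/m}}^{1+1/m}+\tau\,\mathcal{E}(v)=\int_M g\,v\,d\mu .
\]
The exponents \(m+1\) and \(1+1/m\) are conjugate, so Hölder's inequality yields \(\int_M g v\,d\mu\le \|g\|_{L^{m+1}}\|v\|_{L^{1+1/m}}\). By Young's inequality \(ab\le \frac{a^{p}}{p}+\frac{b^{p'}}{p'}\) with \(p=m+1\) and \(p'=(m+1)/m\), we get
\[
\int_M g v\,d\mu\le \frac{1}{m+1}\|g\|_{L^{m+1}}^{m+1}+\frac{m}{m+1}\|v\|_{L^{1+1/m}}^{1+1/m}.
\]

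Substituting this bound and absorbing the \(v\)-term into the left-hand side leaves
\[
\frac{1}{m+1}\|v\|_{L^{1+1/m}}^{1+1/m}+\tau\,\mathcal{E}(v)\le \frac{1}{m+1}\|g\|_{L^{m+1}}^{m+1}.
\]
Multiplying by \(m+1\) gives exactly \eqref{eq:elliptic-estimate}.

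Every step is elementary, so there is no real obstacle. The only points to check are that \(\phi=v\) is a legitimate test function (it is, since \(v\in V\)) and that the pairing \(\langle g,v\rangle_V\) is the integral \(\int_M gv\,d\mu\), which holds by the embedding \(L^{m+1}\hookrightarrow V^*\) described just before Proposition~\ref{prop:elliptic-comparison}. The absorption step is where the factor \(m+1\) in front of \(\tau\mathcal{E}(v)\) comes from, so the Young constants must be kept exactly rather than estimated crudely.
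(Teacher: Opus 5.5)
Your proposal is correct and matches the paper's proof: test \eqref{eq:elliptic-weak} with \(\phi=v\), use \(\Psi^{-1}(v)v=|v|^{1+1/m}\), bound \(\int_M gv\,d\mu\) by Young's inequality with the conjugate exponents \(m+1\) and \(1+1/m\), and absorb the \(v\)-term. Your explicit checks that \(v\) is an admissible test function and that \(\langle g,v\rangle_V=\int_M gv\,d\mu\) are sound and make explicit two points the paper leaves implicit.
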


\begin{proof}
	Taking \(\phi=v\) in \eqref{eq:elliptic-weak} gives
	\[
	\int_M \Psi^{-1}(v)v\,d\mu + \tau\,\mathcal{E}(v) = \int_M g\,v\,d\mu.
	\]
	By Young's inequality,
	\[
	\int_M g\,v\,d\mu \le \frac{1}{m+1}\|g\|_{L^{m+1}}^{m+1} + \frac{m}{m+1}\|v\|_{L^{1+1/m}}^{1+1/m}.
	\]
	Since \(\Psi^{-1}(v)v = |v|^{1+1/m}\), we obtain \eqref{eq:elliptic-estimate}.
\end{proof}

Finally, returning to the original variable \(u=\Psi^{-1}(v)\), we obtain the following corollary, which will be used in Section~\ref{Sec:Para} to construct the sequence of approximate solutions.

\begin{corollary}
	\label{EpiCoco}
	Let \(g\in L^{m+1}(M,\mu)\). Then there exists a unique \(u\) such that
	\[
	u\in L^{m+1}(M,\mu),\qquad \Psi(u)\in\mathcal{F}_e,
	\]
	and \(u\) is the unique weak solution of \eqref{eq:elliptic-u}, i.e.,
	\begin{equation}
		\int_M u\,\phi\,d\mu + \tau\,\mathcal{E}(\Psi(u),\phi) = \int_M g\,\phi\,d\mu \qquad \forall \phi\in V.
		\label{eq:corollary-weak}
	\end{equation}
	Moreover,
	\begin{equation}
		\|u\|_{L^{m+1}}^{m+1} + \tau(m+1)\mathcal{E}(\Psi(u)) \le \|g\|_{L^{m+1}}^{m+1},
		\label{eq:corollary-estimate}
	\end{equation}
	and $u\geq 0$ $\mu$-a.e. provided $g\geq 0$ $\mu$-a.e..
\end{corollary}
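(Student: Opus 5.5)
The plan is to transfer everything from the $v$-formulation to the $u$-variable through the bijection $u=\Psi^{-1}(v)$, $v=\Psi(u)$. First, since $V\hookrightarrow L^{1+1/m}$ continuously and $(L^{1+1/m})^*=L^{m+1}$, the functional $\phi\mapsto\int_M g\phi\,d\mu$ belongs to $V^*$. Lemma~\ref{lem:elliptic_existence} then gives a unique weak solution $v\in V$ of \eqref{eq:elliptic-v} with this datum. Set $u:=\Psi^{-1}(v)$, so that $\Psi(u)=v\in V\subset\mathcal{F}_e$. The integrability of $u$ follows from $|u|^{m+1}=|v|^{(m+1)/m}=|v|^{1+1/m}$, hence
\[
\|u\|_{L^{m+1}}^{m+1}=\|v\|_{L^{1+1/m}}^{1+1/m}<\infty .
\]
Substituting $\Psi^{-1}(v)=u$ and $v=\Psi(u)$ into \eqref{eq:elliptic-weak} gives \eqref{eq:corollary-weak} directly.

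Next comes uniqueness. Suppose $\tilde u\in L^{m+1}$ with $\Psi(\tilde u)\in\mathcal{F}_e$ also satisfies \eqref{eq:corollary-weak}. Then $\tilde v:=\Psi(\tilde u)$ satisfies $|\tilde v|^{1+1/m}=|\tilde u|^{m+1}$, so $\tilde v\in L^{1+1/m}\cap\mathcal{F}_e=V$. Consequently $\tilde v$ is a weak solution of \eqref{eq:elliptic-v} in the sense of Definition~\ref{def:elliptic_weak}, and the uniqueness part of Lemma~\ref{lem:elliptic_existence} forces $\tilde v=v$. Since $\Psi$ is injective, $\tilde u=u$ $\mu$-a.e. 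The only subtle point in the whole argument is exactly this step: one must check that the class "$u\in L^{m+1}$, $\Psi(u)\in\mathcal{F}_e$" maps precisely onto $V$, so that the uniqueness of Lemma~\ref{lem:elliptic_existence} applies. This is just the exponent identity $(m+1)/m=1+1/m$.

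The estimate \eqref{eq:corollary-estimate} is Lemma~\ref{lem:elliptic_estimates} rewritten with $\|v\|_{L^{1+1/m}}^{1+1/m}=\|u\|_{L^{m+1}}^{m+1}$ and $\mathcal{E}(v)=\mathcal{E}(\Psi(u))$. Finally, if $g\ge0$, Corollary~\ref{prop:nonnega} gives $v\ge0$, hence $u=\Psi^{-1}(v)\ge0$ $\mu$-a.e., because $\Psi^{-1}$ is increasing with $\Psi^{-1}(0)=0$.
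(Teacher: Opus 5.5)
Your proposal is correct and follows the same route as the paper, which simply cites Lemma~\ref{lem:elliptic_existence}, Corollary~\ref{prop:nonnega}, Lemma~\ref{lem:elliptic_estimates} and the change of variables $u=\Psi^{-1}(v)$. Your explicit check that $\Psi$ maps the class $\{u\in L^{m+1}:\Psi(u)\in\mathcal{F}_e\}$ exactly onto $V$, via $|\Psi(u)|^{1+1/m}=|u|^{m+1}$, is the detail the paper leaves implicit in its uniqueness claim.
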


\begin{proof}
	This follows directly from Lemma~\ref{lem:elliptic_existence}, Corollary~\ref{prop:nonnega}, Lemma~\ref{lem:elliptic_estimates}, and the change of variables \(u=\Psi^{-1}(v)\).
\end{proof}

\section{Well-posedness of the Cauchy problem \eqref{eq:cauchy}}
\label{Sec:Para}

In this section, we prove the main theorem stated in Section~\ref{sec:Intro}, namely the existence and uniqueness of weak solutions to the Cauchy problem \eqref{eq:cauchy}. The argument is carried out via the Rothe method: we construct a sequence of piecewise constant in time approximations using the elliptic solvers from Section~\ref{Sec:Epi}, establish uniform a priori estimates, pass to a weak limit, and then identify the limit via a monotonicity argument.

\subsection{Construction of approximate solutions and uniform estimates}
\label{subsec:uvw}

For each \(n\ge1\), starting from \(u_{n,0}:=u_0\), we recursively apply Corollary~\ref{EpiCoco} with \(\tau=T/n\) and \(g=u_{n,k-1}\) to obtain a sequence \(\{u_{n,k}\}_{k=0}^{n}\subset L^{m+1}(M,\mu)\) such that, for each \(1\le k\le n\),

\begin{enumerate}
	\item[(i)] \(\Psi(u_{n,k})\in\mathcal{F}_e\), and \(u_{n,k}\) is the unique weak solution of
	\[
	u_{n,k}-\frac{T}{n}\mathcal{L}(\Psi(u_{n,k}))=u_{n,k-1}
	\]
	in the sense of Corollary~\ref{EpiCoco};
	
	\item[(ii)]
	\begin{equation}
		\|u_{n,k}\|_{L^{m+1}}^{m+1} + \frac{T}{n}(m+1)\mathcal{E}(\Psi(u_{n,k}))
		\le \|u_{n,k-1}\|_{L^{m+1}}^{m+1}.
		\label{eq:energy-step}
	\end{equation}
\end{enumerate}

Moreover, if \(u_0\ge0\) \(\mu\)-a.e., then \(u_{n,k}\ge0\) \(\mu\)-a.e. for each \(1\le k\le n\).

For each \(n\ge1\), define the piecewise constant approximate solution \(u_n\) as follows. For every \(t\in(0,T]\), let \(k\in\{1,\dots,n\}\) be the unique index such that \(t\in((k-1)T/n,kT/n]\). Then, for \(\mu\)-a.e. \(x\in M\),
\[
u_n(t,x):=u_{n,k}(x).
\]
At \(t=0\), set \(u_n(0,\cdot)\equiv u_0\).

Based on \(u_n\), we define
\[
v_n(t,x):=\Psi(u_n(t,x)),\qquad
w_n(t,x):=\frac{u_{n,k}-u_{n,k-1}}{T/n}
\quad\text{for } t\in((k-1)T/n,kT/n],
\]
with \(v_n(0,\cdot)\equiv0\) and \(w_n(0,\cdot)\equiv0\).

Equivalently, for \(t\in[0,T]\) and \(\mu\)-a.e. \(x\in M\),
\begin{align}
	u_n(t,x)&=\sum_{k=1}^n u_{n,k}(x)\mathbf{1}_{((k-1)T/n,kT/n]}(t)+u_0(x)\mathbf{1}_{\{0\}}(t), \label{uN} \\
	v_n(t,x)&=\sum_{k=1}^n \Psi(u_{n,k}(x))\mathbf{1}_{((k-1)T/n,kT/n]}(t), \label{vN} \\
	w_n(t,x)&=\sum_{k=1}^n \frac{u_{n,k}-u_{n,k-1}}{T/n}\mathbf{1}_{((k-1)T/n,kT/n]}(t). \label{wN}
\end{align}
In particular, \(v_n(t,\cdot)=\Psi(u_n(t,\cdot))\) for all \(0<t\le T\).

We now establish uniform bounds for the approximate solutions \(u_n\) and the associated quantities \(v_n\) and \(w_n\), independent of \(n\).

\begin{proposition}[Uniform bounds]
	\label{ParaUB}
	The following statements hold.
	
	\begin{enumerate}
		\item[(i)] For each \(n\ge1\), \(u_n\in L^\infty(0,T;L^{m+1})\subset L^2(0,T;L^{m+1})\). Moreover,
		\begin{align}
			\sup_{n\ge1}\|u_n\|_{L^\infty(0,T;L^{m+1})} &\le \|u_0\|_{L^{m+1}}, \label{ustNUB} \\
			\sup_{n\ge1}\|u_n\|_{L^2(0,T;L^{m+1})} &\le \sqrt{T}\|u_0\|_{L^{m+1}}. \label{uNUB}
		\end{align}
		
		\item[(ii)] For each \(n\ge1\), \(v_n\in L^2(0,T;V)\). Moreover,
		\begin{equation}
			\sup_{n\ge1}\|v_n\|_{L^2(0,T;V)}
			\le \sqrt{T\|u_0\|_{L^{m+1}}^{2m} + \frac{\|u_0\|_{L^{m+1}}^{m+1}}{m+1}}
			=: C_*(m,T,\|u_0\|_{L^{m+1}}). \label{vNUB}
		\end{equation}
		
		\item[(iii)] For each \(n\ge1\), \(w_n\in L^2(0,T;V^*)\). Moreover,
		\begin{equation}
			\sup_{n\ge1}\|w_n\|_{L^2(0,T;V^*)}
			\le \sqrt{\frac{\|u_0\|_{L^{m+1}}^{m+1}}{m+1}}. \label{wNUB}
		\end{equation}
	\end{enumerate}
\end{proposition}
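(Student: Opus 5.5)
The plan is to read everything off the one-step energy inequality \eqref{eq:energy-step}, used in two ways: first to show that $k\mapsto\|u_{n,k}\|_{L^{m+1}}$ is nonincreasing, and second, after telescoping, to control the discrete time integral of the energy. Since $u_n$, $v_n$, $w_n$ are finite sums of fixed elements of $L^{m+1}$, $V$, $V^*$ multiplied by indicators of intervals, they are simple functions. Hence Bochner measurability and membership in the corresponding $L^\infty$ and $L^2$ Bochner spaces are automatic once the norms are finite, and all the work lies in the bounds.

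For (i), dropping the nonnegative energy term in \eqref{eq:energy-step} gives $\|u_{n,k}\|_{L^{m+1}}\le\|u_{n,k-1}\|_{L^{m+1}}$. By induction from $u_{n,0}=u_0$ this yields $\|u_{n,k}\|_{L^{m+1}}\le\|u_0\|_{L^{m+1}}$ for all $k$, which is \eqref{ustNUB}. Then \eqref{uNUB} follows from $\|u_n\|_{L^2(0,T;L^{m+1})}\le\sqrt T\,\|u_n\|_{L^\infty(0,T;L^{m+1})}$.

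For (ii), the key observation is the identity
\[
\|\Psi(u)\|_{L^{1+1/m}}=\Bigl(\int_M|u|^{m+1}\,d\mu\Bigr)^{m/(m+1)}=\|u\|_{L^{m+1}}^{m}.
\]
Together with (i), this shows that $\Psi(u_{n,k})\in L^{1+1/m}\cap\mathcal{F}_e=V$ and $\|\Psi(u_{n,k})\|_{L^{1+1/m}}^2\le\|u_0\|_{L^{m+1}}^{2m}$. Therefore
\[
\|v_n\|_{L^2(0,T;V)}^2=\sum_{k=1}^n\frac{T}{n}\Bigl(\|\Psi(u_{n,k})\|_{L^{1+1/m}}^2+\mathcal{E}(\Psi(u_{n,k}))\Bigr)\le T\|u_0\|_{L^{m+1}}^{2m}+\sum_{k=1}^n\frac{T}{n}\mathcal{E}(\Psi(u_{n,k})).
\]
Summing \eqref{eq:energy-step} over $k=1,\dots,n$ telescopes to
\[
(m+1)\sum_{k=1}^n\frac{T}{n}\mathcal{E}(\Psi(u_{n,k}))\le\|u_0\|_{L^{m+1}}^{m+1}-\|u_{n,n}\|_{L^{m+1}}^{m+1}\le\|u_0\|_{L^{m+1}}^{m+1}.
\]
Substituting this into the previous display gives \eqref{vNUB}.

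For (iii), I first note that each difference quotient $(u_{n,k}-u_{n,k-1})/(T/n)$ lies in $L^{m+1}$, which embeds continuously in $V^*$. Its $V^*$-norm is then identified through the weak formulation \eqref{eq:corollary-weak} with $\tau=T/n$ and $g=u_{n,k-1}$, which gives for every $\phi\in V$
\[
\Bigl\langle \tfrac{u_{n,k}-u_{n,k-1}}{T/n},\phi\Bigr\rangle_V=-\mathcal{E}(\Psi(u_{n,k}),\phi).
\]
By the Cauchy--Schwarz inequality for $\mathcal{E}$ and $\sqrt{\mathcal{E}(\phi)}\le\|\phi\|_V$, this yields $\|w_n(t)\|_{V^*}\le\sqrt{\mathcal{E}(\Psi(u_{n,k}))}$ on $((k-1)T/n,kT/n]$. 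Squaring, integrating in time and applying the same telescoped bound as in (ii) gives \eqref{wNUB}.

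I expect no serious obstacle. The points that need care are the $L^{1+1/m}$–$L^{m+1}$ norm identity for $\Psi$ and the use of the weak formulation, rather than the crude $L^{m+1}$ embedding, to estimate $w_n$ in $V^*$ uniformly in $n$. The crude embedding would give bounds of order $n$, whereas the weak formulation gives a bound that is uniform in $n$.
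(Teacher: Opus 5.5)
Your proposal is correct and follows the paper's proof essentially step for step. The paper likewise uses monotonicity of $\|u_{n,k}\|_{L^{m+1}}$ from \eqref{eq:energy-step} and the identity $\|\Psi(u)\|_{L^{1+1/m}}=\|u\|_{L^{m+1}}^m$, together with the telescoped energy sum. It also estimates the $V^*$-norm of the difference quotients through the weak formulation and Cauchy--Schwarz for $\mathcal{E}$. Your remark that $u_n$, $v_n$, $w_n$ are simple functions, which settles Bochner measurability, is a small point the paper leaves implicit.
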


\begin{proof}
	We first prove (i). For each \(n\ge1\), it follows from \eqref{eq:energy-step} that, for each \(1\le k\le n\),
	\[
	\|u_{n,k}\|_{L^{m+1}}\le \|u_{n,k-1}\|_{L^{m+1}}.
	\]
	Hence
	\[
	\esup_{t\in[0,T]}\|u_n(t)\|_{L^{m+1}}
	= \max_{0\le k\le n}\|u_{n,k}\|_{L^{m+1}}
	\le \|u_{n,0}\|_{L^{m+1}}
	= \|u_0\|_{L^{m+1}},
	\]
	which gives \(u_n\in L^\infty(0,T;L^{m+1})\) and proves \eqref{ustNUB}. The embedding \(L^\infty(0,T;L^{m+1})\subset L^2(0,T;L^{m+1})\) is immediate.
	
	Furthermore, from the above estimate,
	\[
	\|u_n\|_{L^2(0,T;L^{m+1})}^2
	= \int_0^T \|u_n(t)\|_{L^{m+1}}^2\,dt
	\le T\|u_0\|_{L^{m+1}}^2,
	\]
	which proves \eqref{uNUB}.
	
	We now prove (ii). Since \(|v_n(t,\cdot)|=|u_n(t,\cdot)|^m\) for every \(0<t\le T\), we have
	\begin{align}
		\int_0^T \|v_n(t)\|_{L^{1+1/m}}^2\,dt
		&= \int_0^T \left(\int_M |v_n(t)|^{1+1/m}\,d\mu\right)^{\frac{2}{1+1/m}}\,dt \notag \\
		&= \int_0^T \left(\int_M |u_n(t)|^{1+m}\,d\mu\right)^{\frac{2m}{1+m}}\,dt \notag \\
		&= \int_0^T \|u_n(t)\|_{L^{m+1}}^{2m}\,dt
		\le T\|u_0\|_{L^{m+1}}^{2m}. \label{eq:v-Lp-est}
	\end{align}
	On the other hand, by the definition of \(v_n\) in \eqref{vN} and using \eqref{eq:energy-step},
	\begin{equation}
		\int_0^T \mathcal{E}(v_n(t))\,dt
		= \frac{T}{n}\sum_{k=1}^n \mathcal{E}(\Psi(u_{n,k}))
		\le \frac{1}{m+1}\sum_{k=1}^n \left(\|u_{n,k-1}\|_{L^{m+1}}^{m+1}-\|u_{n,k}\|_{L^{m+1}}^{m+1}\right)
		\le \frac{\|u_0\|_{L^{m+1}}^{m+1}}{m+1}. \label{eq:esumu0}
	\end{equation}
	Combining \eqref{eq:v-Lp-est} and \eqref{eq:esumu0}, we obtain
	\[
	\int_0^T \|v_n(t)\|_V^2\,dt
	= \int_0^T \|v_n(t)\|_{L^{1+1/m}}^2\,dt + \int_0^T \mathcal{E}(v_n(t))\,dt
	\le T\|u_0\|_{L^{m+1}}^{2m} + \frac{\|u_0\|_{L^{m+1}}^{m+1}}{m+1},
	\]
	which shows that \(v_n\in L^2(0,T;V)\) and proves \eqref{vNUB}.
	
	We finally prove (iii). From \eqref{eq:elliptic-weak}, for each \(1\le k\le n\) and any \(\phi\in V\),
	\[
	\left|\left\langle \frac{u_{n,k}-u_{n,k-1}}{T/n}, \phi\right\rangle_V\right|
	= |\mathcal{E}(\Psi(u_{n,k}),\phi)|
	\le \sqrt{\mathcal{E}(\Psi(u_{n,k}))}\sqrt{\mathcal{E}(\phi)}
	\le \sqrt{\mathcal{E}(\Psi(u_{n,k}))}\|\phi\|_V,
	\]
	hence
	\begin{equation}
		\left\|\frac{u_{n,k}-u_{n,k-1}}{T/n}\right\|_{V^*}
		\le \sqrt{\mathcal{E}(\Psi(u_{n,k}))}. \label{eq:vastnorm}
	\end{equation}
	Using \eqref{eq:esumu0}, we get
	\[
	\int_0^T \|w_n(t)\|_{V^*}^2\,dt
	= \sum_{k=1}^n \int_{(k-1)T/n}^{kT/n}
	\left\|\frac{u_{n,k}-u_{n,k-1}}{T/n}\right\|_{V^*}^2\,dt
	\le \frac{T}{n}\sum_{k=1}^n \mathcal{E}(\Psi(u_{n,k}))
	\le \frac{\|u_0\|_{L^{m+1}}^{m+1}}{m+1}.
	\]
	Thus \(w_n\in L^2(0,T;V^*)\) and \eqref{wNUB} holds.
\end{proof}

Since \(V\) is reflexive (Corollary~\ref{cor:Vq_reflexive}) and \(L^{m+1}(M,\mu)\) is reflexive, it follows from the Phillips theorem for vector-valued \(L^p\)-spaces (see, e.g., \cite[Corollary 2, \S IV]{1977Vector}) that both \(L^2(0,T;V)\) and \(L^2(0,T;V^*)\) are reflexive. Moreover, the separability of \(L^{m+1}(M,\mu)\) implies
\[
L^\infty(0,T;L^{m+1}) = \left(L^1(0,T;L^{1+1/m})\right)^*
\]
(see, e.g., \cite[Problem~23.12d]{Zeidler1990IIA}). Consequently, the uniform bounds from Proposition~\ref{ParaUB} allow us to apply the Banach--Alaoglu theorem for weak-* convergence and the Eberlein--\v{S}mulian theorem for weak convergence, yielding a subsequence \(\{n_j\}\subset\mathbb{N}\) such that $n_j\uparrow\infty$ as $j\to\infty$, and
\begin{align}
	u_{n_j} &\overset{*}{\rightharpoonup} u^* &&\text{in } L^\infty(0,T;L^{m+1}), \label{eq:uast-weak} \\
	u_{n_j} &\rightharpoonup u &&\text{in } L^2(0,T;L^{m+1}), \label{eq:u-weak} \\
	v_{n_j} &\rightharpoonup v &&\text{in } L^2(0,T;V), \label{eq:v-weak} \\
	w_{n_j} &\rightharpoonup w &&\text{in } L^2(0,T;V^*). \label{eq:w-weak}
\end{align}
as $j\to\infty$.

Since the same subsequence converges weakly in \(L^2(0,T;L^{m+1})\) and weak-* in \(L^\infty(0,T;L^{m+1})\), the two limits agree a.e. on \((0,T)\times M\). Hence \(u(t,\cdot)=u^*(t,\cdot)\) for a.e. \(t\in(0,T)\), and consequently \(u\in L^\infty(0,T;L^{m+1})\).

\subsection{Derivation of the weak formulation}

We first establish a discrete energy identity that will serve as the starting point for the passage to the limit.

For \(\varphi\in C_c^1([0,T);V)\) and \(n\ge1\), define
\[
\varphi_{n,k}:=\varphi\left(\frac{(k-1)T}{n},\cdot\right)\quad\text{for each }1\le k\le n,
\]
and set \(\varphi_{n,n+1}\equiv0\). Define the piecewise constant approximation of \(\varphi\) by
\[
\varphi_n(t,x):=\sum_{k=1}^n \varphi_{n,k}(x)\mathbf{1}_{\left[\frac{(k-1)T}{n},\frac{kT}{n}\right)}(t)+0\cdot\mathbf{1}_{\{T\}}(t)
\]
for \(0\le t\le T\) and \(\mu\)-a.e. \(x\in M\).

The following standard estimate will be used to pass to the limit in the discrete energy identity.

\begin{lemma}
	\label{ParaTestStrong}
	For every \(\varphi\in C_c^1([0,T);V)\),
	\begin{equation}
		\|\varphi_n-\varphi\|_{L^2(0,T;V)}
		\le \frac{T}{\sqrt{n}}\|\partial_t\varphi\|_{L^2(0,T;V)}. \label{vphiNvphi}
	\end{equation}
	In particular, \(\varphi_n\to\varphi\) strongly in \(L^2(0,T;V)\) as \(n\to\infty\).
\end{lemma}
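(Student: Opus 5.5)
The plan is to estimate the error pointwise in time on each subinterval and then sum. Fix $n$ and set $I_k:=[(k-1)T/n,kT/n)$ for $1\le k\le n$. On $I_k$ we have $\varphi_n(t)=\varphi((k-1)T/n)$. Since $\varphi\in C^1([0,T);V)$ and it is compactly supported, it extends to a $C^1$ function on $[0,T]$ that vanishes near $T$. The fundamental theorem of calculus for $V$-valued $C^1$ functions (Bochner integral) then gives
\[
\varphi(t)-\varphi_n(t)=\int_{(k-1)T/n}^{t}\partial_t\varphi(s)\,ds,\qquad t\in I_k .
\]
At the single point $t=T$ both $\varphi_n$ and $\varphi$ vanish, so that point plays no role.

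Next we bound the norm. Using $\|\int f\|_V\le\int\|f\|_V$ and then Cauchy--Schwarz in $s$, for $t\in I_k$ we get
\[
\|\varphi(t)-\varphi_n(t)\|_V^2\le\Bigl(\int_{I_k}\|\partial_t\varphi(s)\|_V\,ds\Bigr)^2\le\frac{T}{n}\int_{I_k}\|\partial_t\varphi(s)\|_V^2\,ds .
\]
Integrating over $t\in I_k$ contributes another factor $|I_k|=T/n$. Summing over $k$ then yields
\[
\|\varphi_n-\varphi\|_{L^2(0,T;V)}^2\le\frac{T^2}{n^2}\|\partial_t\varphi\|_{L^2(0,T;V)}^2 .
\]
This gives the bound $\frac{T}{n}\|\partial_t\varphi\|_{L^2(0,T;V)}$. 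Since $T/n\le T/\sqrt n$ for $n\ge1$, the stated estimate \eqref{vphiNvphi} follows. The right-hand side is finite because $\partial_t\varphi$ is continuous with compact support in $[0,T)$, so it lies in $L^2(0,T;V)$. Strong convergence $\varphi_n\to\varphi$ is then immediate.

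The only point needing any care is measurability and Bochner integrability. The function $\varphi_n$ is a simple function with values in $V$, hence strongly measurable. The function $\varphi$ is continuous, hence strongly measurable. So $\varphi_n-\varphi\in L^2(0,T;V)$, and the fundamental theorem of calculus holds in the Banach space $V$, which is complete by Proposition~\ref{prop:Vq_Banach}. I expect no real obstacle: this is a routine estimate.
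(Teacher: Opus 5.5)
Your proof is correct and follows the same route as the paper: the fundamental theorem of calculus on each subinterval $[(k-1)T/n,kT/n)$, the Bochner norm inequality, Cauchy--Schwarz in $s$, then integration in $t$ and summation over $k$. The only difference is bookkeeping. The paper bounds $\int_{(k-1)T/n}^{t}\|\partial_t\varphi(s)\|_V^2\,ds$ by the full $\|\partial_t\varphi\|_{L^2(0,T;V)}^2$ and so gets $T/\sqrt{n}$. You keep the integral local to each subinterval and obtain the sharper bound $\frac{T}{n}\|\partial_t\varphi\|_{L^2(0,T;V)}$, which implies \eqref{vphiNvphi}.
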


\begin{proof}
	From the definition of \(\varphi_n\),
	\begin{align}
		\|\varphi_n-\varphi\|_{L^2(0,T;V)}^2
		&= \sum_{k=1}^n \int_{(k-1)T/n}^{kT/n}
		\left\|\varphi\left(\frac{(k-1)T}{n}\right)-\varphi(t)\right\|_V^2 dt.
		\label{vphit01}
	\end{align}
	By \cite[Theorem~2(ii), \S~5.9.2]{Evans.2010.749},
	\[
	\varphi\left(\frac{(k-1)T}{n}\right)-\varphi(t)
	= -\int_{(k-1)T/n}^{t}\partial_t\varphi(s)\,ds.
	\]
	Thus, by the Bochner theorem (see \cite[Theorem~8, Appendix~E.5]{Evans.2010.749}) and the Cauchy--Schwarz inequality, for any \(t\in[(k-1)T/n,kT/n]\),
	\begin{align}
		\left\|\varphi\left(\frac{(k-1)T}{n}\right)-\varphi(t)\right\|_V^2
		&= \left\|\int_{(k-1)T/n}^{t}\partial_t\varphi(s)\,ds\right\|_V^2 \notag \\
		&\le \left(\int_{(k-1)T/n}^{t}\|\partial_t\varphi(s)\|_V\,ds\right)^2 \notag \\
		&\le \left(t-\frac{(k-1)T}{n}\right)
		\int_{(k-1)T/n}^{t}\|\partial_t\varphi(s)\|_V^2\,ds \notag \\
		&\le \frac{T}{n}\|\partial_t\varphi\|_{L^2(0,T;V)}^2. \label{vphit02}
	\end{align}
	Substituting \eqref{vphit02} into \eqref{vphit01} yields
	\[
	\|\varphi_n-\varphi\|_{L^2(0,T;V)}^2
	\le \sum_{k=1}^n \int_{(k-1)T/n}^{kT/n}
	\frac{T}{n}\|\partial_t\varphi\|_{L^2(0,T;V)}^2\,dt
	= \frac{T^2}{n}\|\partial_t\varphi\|_{L^2(0,T;V)}^2,
	\]
	which proves \eqref{vphiNvphi}.
\end{proof}

The following discrete energy identity is the core of the Rothe method.

\begin{proposition}[Discrete energy identity]
	\label{ParaWeak}
	For every \(\varphi\in C_c^1([0,T);V)\) and every \(n\ge1\),
	\begin{equation}
		-\int_0^T\int_M u_n\,\partial_t\varphi\,d\mu\,dt
		+\int_0^T \mathcal{E}(v_n(t),\varphi_n(t))\,dt
		= \int_M u_0\,\varphi(0)\,d\mu. \label{eq:discrete-id}
	\end{equation}
\end{proposition}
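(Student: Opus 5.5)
The plan is to test the discrete equation \eqref{eq:corollary-weak} at step \(k\) with \(\phi=\varphi_{n,k}\in V\), sum over \(k\), and perform a discrete summation by parts (Abel summation) in time. The result is then matched with the two integrals in \eqref{eq:discrete-id} using the piecewise constant structure of \(u_n\), \(v_n\), \(\varphi_n\).

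\textbf{Step 1: test and sum.} By Corollary~\ref{EpiCoco} with \(\tau=T/n\), \(g=u_{n,k-1}\) and \(\phi=\varphi_{n,k}\),
\[
\int_M (u_{n,k}-u_{n,k-1})\varphi_{n,k}\,d\mu+\frac{T}{n}\mathcal{E}(\Psi(u_{n,k}),\varphi_{n,k})=0,\qquad 1\le k\le n.
\]
Summing over \(k\), the energy terms give exactly \(\int_0^T\mathcal{E}(v_n(t),\varphi_n(t))\,dt\). This is because on \(((k-1)T/n,kT/n)\) we have \(v_n=\Psi(u_{n,k})\) and \(\varphi_n=\varphi_{n,k}\), and the endpoints form a null set.

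\textbf{Step 2: Abel summation.} Using \(\varphi_{n,n+1}=0\) and \(u_{n,0}=u_0\), the first sum becomes
\[
\sum_{k=1}^n\int_M(u_{n,k}-u_{n,k-1})\varphi_{n,k}\,d\mu=-\int_M u_0\varphi_{n,1}\,d\mu-\sum_{k=1}^n\int_M u_{n,k}(\varphi_{n,k+1}-\varphi_{n,k})\,d\mu.
\]
We have \(\varphi_{n,1}=\varphi(0)\), which produces the right-hand side of \eqref{eq:discrete-id}. For \(k\le n-1\), write \(\varphi_{n,k+1}-\varphi_{n,k}=\int_{(k-1)T/n}^{kT/n}\partial_t\varphi(s)\,ds\) in \(V\), using \cite[Theorem~2, \S5.9.2]{Evans.2010.749}. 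Since \(u_{n,k}\in L^{m+1}\subset V^*\) and integration against it is a bounded functional on \(V\), the Bochner theorem gives
\[
\int_M u_{n,k}(\varphi_{n,k+1}-\varphi_{n,k})\,d\mu=\int_{(k-1)T/n}^{kT/n}\int_M u_n\,\partial_t\varphi\,d\mu\,dt .
\]

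\textbf{Step 3: the last step \(k=n\) (main obstacle).} Here \(\varphi_{n,n+1}=0\) is an artificial convention rather than \(\varphi(nT/n)\). However, \(\varphi\in C_c^1([0,T);V)\) gives \(\varphi(T)=0\) by continuity of the extension, so \(0-\varphi_{n,n}=\varphi(T)-\varphi((n-1)T/n)=\int_{(n-1)T/n}^{T}\partial_t\varphi\,ds\). The same identity as in Step 2 therefore holds for \(k=n\).

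\textbf{Conclusion and indexing check.} Summing over \(k\) yields \(-\int_0^T\int_M u_n\partial_t\varphi\,d\mu\,dt\), and combining with Step 1 gives \eqref{eq:discrete-id}. The point I would check most carefully is the index alignment: \(u_n=u_{n,k}\) on \(((k-1)T/n,kT/n]\), while \(\varphi_{n,k}=\varphi((k-1)T/n)\) is left-endpoint sampled. These must combine so that the difference \(\varphi_{n,k+1}-\varphi_{n,k}\) is the integral of \(\partial_t\varphi\) over exactly the interval on which \(u_n=u_{n,k}\). With the stated conventions, both refer to the interval \([(k-1)T/n,kT/n]\), so the identity is exact, with no error term, for every \(n\).
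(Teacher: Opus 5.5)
Your proposal is correct and follows essentially the same route as the paper. Both arguments test the step equation with $\varphi_{n,k}$, sum over $k$, and perform a discrete summation by parts matched against the piecewise-constant structure of $u_n$, $v_n$, $\varphi_n$. Your explicit use of $\varphi(T)=0$ for the last step $k=n$, and of the Bochner theorem to identify $\int_M u_{n,k}(\varphi_{n,k+1}-\varphi_{n,k})\,d\mu$ with the corresponding space--time integral, fills in two points the paper uses only implicitly.
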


\begin{proof}
	For each \(1\le k\le n\), taking \(\phi=\varphi_{n,k}\) in the weak formulation for \(u_{n,k}\) (see Corollary~\ref{EpiCoco}), we obtain
	\begin{equation}
		\int_M (u_{n,k}-u_{n,k-1})\varphi_{n,k}\,d\mu
		+ \frac{T}{n}\mathcal{E}(\Psi(u_{n,k}),\varphi_{n,k}) = 0.
		\label{eq:discrete-step}
	\end{equation}
	Summing \eqref{eq:discrete-step} over \(k=1,\dots,n\) gives
	\begin{equation}
		\sum_{k=1}^n \int_M u_{n,k}\varphi_{n,k}\,d\mu
		-\sum_{k=1}^n \int_M u_{n,k-1}\varphi_{n,k}\,d\mu
		+\frac{T}{n}\sum_{k=1}^n \mathcal{E}(\Psi(u_{n,k}),\varphi_{n,k}) = 0.
		\label{eq:discrete-sum}
	\end{equation}
	
	We now rewrite the time-integral terms. First, by the definition of \(v_n\) in \eqref{vN},
	\begin{equation} \label{eq:energy-sum}
		\int_0^T \mathcal{E}(v_n(t),\varphi_n(t))\,dt
		= \sum_{k=1}^n \int_{(k-1)T/n}^{kT/n}
		\mathcal{E}(\Psi(u_{n,k}),\varphi_{n,k})\,dt
		= \frac{T}{n}\sum_{k=1}^n \mathcal{E}(\Psi(u_{n,k}),\varphi_{n,k}).
	\end{equation}
	Second,
	\begin{align}
		-\int_0^T\int_M u_n(t)\,\partial_t\varphi(t)\,d\mu\,dt
		&= -\sum_{k=1}^n \int_{(k-1)T/n}^{kT/n}
		\int_M u_{n,k}\,\partial_t\varphi(t)\,d\mu\,dt \notag \\
		&= -\sum_{k=1}^n \int_M u_{n,k}
		\left(\varphi\left(\frac{kT}{n},\cdot\right)-\varphi\left(\frac{(k-1)T}{n},\cdot\right)\right)d\mu \notag \\
		&= -\sum_{k=1}^n \int_M u_{n,k}(\varphi_{n,k+1}-\varphi_{n,k})\,d\mu \notag \\
		&= \sum_{k=1}^n \int_M u_{n,k}\varphi_{n,k}\,d\mu
		-\sum_{k=1}^n \int_M u_{n,k}\varphi_{n,k+1}\,d\mu, \label{eq:time-sum}
	\end{align}
	where we used \(\varphi_{n,n+1}\equiv0\).
	
	Adding \eqref{eq:energy-sum} and \eqref{eq:time-sum}, and comparing with \eqref{eq:discrete-sum}, we get
	\begin{align}
		& -\int_0^T\int_M u_n\,\partial_t\varphi\,d\mu\,dt
		+\int_0^T \mathcal{E}(v_n(t),\varphi_n(t))\,dt \notag \\
		&= \sum_{k=1}^n \int_M u_{n,k}\varphi_{n,k}\,d\mu
		-\sum_{k=1}^n \int_M u_{n,k}\varphi_{n,k+1}\,d\mu
		+\frac{T}{n}\sum_{k=1}^n \mathcal{E}(\Psi(u_{n,k}),\varphi_{n,k}) \notag \\
		&= \Bigg(\sum_{k=1}^n \int_M u_{n,k}\varphi_{n,k}\,d\mu
		-\sum_{k=1}^n \int_M u_{n,k-1}\varphi_{n,k}\,d\mu
		+\frac{T}{n}\sum_{k=1}^n \mathcal{E}(\Psi(u_{n,k}),\varphi_{n,k})\Bigg) \notag \\
		&\qquad + \Bigg(\sum_{k=1}^n \int_M u_{n,k-1}\varphi_{n,k}\,d\mu
		-\sum_{k=1}^n \int_M u_{n,k}\varphi_{n,k+1}\,d\mu\Bigg) \notag \\
		&= 0 + \int_M u_0\,\varphi_{n,1}\,d\mu - \int_M u_n\,\varphi_{n,n+1}\,d\mu \notag \\
		&= \int_M u_0\,\varphi(0,\cdot)\,d\mu,
	\end{align}
	where the last equality follows from \(\varphi_{n,1}=\varphi(0,\cdot)\) and \(\varphi_{n,n+1}\equiv0\). This proves \eqref{eq:discrete-id}.
\end{proof}

\begin{proposition}[Weak formulation on smooth test functions]
	\label{prop:weak-form}
	Let \(u\) and \(v\) be the weak limits obtained in \eqref{eq:u-weak} and \eqref{eq:v-weak}, respectively. Then, for every \(\varphi\in C_c^1([0,T);V)\),
	\begin{equation}
		-\int_0^T\int_M u\,\partial_t\varphi\,d\mu\,dt
		+\int_0^T \mathcal{E}(v(t),\varphi(t))\,dt
		= \int_M u_0\,\varphi(0)\,d\mu. \label{eq:weak-form}
	\end{equation}
\end{proposition}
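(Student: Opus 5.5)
The plan is to pass to the limit $n=n_j\to\infty$ in the discrete energy identity \eqref{eq:discrete-id} of Proposition~\ref{ParaWeak}, term by term, using the weak convergences \eqref{eq:u-weak}, \eqref{eq:v-weak} and the strong convergence $\varphi_n\to\varphi$ of Lemma~\ref{ParaTestStrong}. The right-hand side $\int_M u_0\,\varphi(0)\,d\mu$ does not depend on $n$, so only the two terms on the left need attention.

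For the first term, I would observe that $\partial_t\varphi\in C_c([0,T);V)\subset L^2(0,T;V)$. Since $V\hookrightarrow L^{1+1/m}=(L^{m+1})^*$ continuously, we also have $\partial_t\varphi\in L^2(0,T;L^{1+1/m})\subset (L^2(0,T;L^{m+1}))^*$. The map $\mathbf{u}\mapsto\int_0^T\int_M \mathbf{u}\,\partial_t\varphi\,d\mu\,dt$ is therefore a bounded linear functional on $L^2(0,T;L^{m+1})$, and \eqref{eq:u-weak} gives convergence of this term to $-\int_0^T\int_M u\,\partial_t\varphi\,d\mu\,dt$.

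For the energy term, I would split
\[
\int_0^T\mathcal{E}(v_n,\varphi_n)\,dt=\int_0^T\mathcal{E}(v_n,\varphi_n-\varphi)\,dt+\int_0^T\mathcal{E}(v_n,\varphi)\,dt .
\]
\begin{itemize}
\item The first piece is bounded, by Cauchy--Schwarz for $\mathcal{E}$ and in time, by $\|v_n\|_{L^2(0,T;V)}\|\varphi_n-\varphi\|_{L^2(0,T;V)}$. Using the uniform bound \eqref{vNUB} together with \eqref{vphiNvphi}, this is at most $C_*\,T n^{-1/2}\|\partial_t\varphi\|_{L^2(0,T;V)}\to0$.
\item For the second piece, I use that $\mathcal{E}$ satisfies \eqref{eq:bounded-bilinear} on $V$ with $C=1$. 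Hence $\mathbf{x}\mapsto\int_0^T\mathcal{E}(\mathbf{x}(t),\varphi(t))\,dt$ is a bounded linear functional on $L^2(0,T;V)$, represented by $t\mapsto\mathcal{E}(\cdot,\varphi(t))\in L^2(0,T;V^*)$. By \eqref{eq:v-weak}, this piece converges to $\int_0^T\mathcal{E}(v,\varphi)\,dt$.
\end{itemize}

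The only step needing care is the measurability of $t\mapsto\mathcal{E}(\cdot,\varphi(t))$ as a $V^*$-valued function, needed for the representation in $L^2(0,T;V^*)$. This should follow because $\varphi$ is continuous into $V$ and $y\mapsto\mathcal{E}(\cdot,y)$ is a bounded linear map $V\to V^*$, so the composite is continuous, hence strongly measurable. Alternatively, one can verify boundedness of the functional directly, which suffices for weak convergence. Combining the three limits yields \eqref{eq:weak-form}.
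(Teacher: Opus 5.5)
Your proposal is correct and follows the paper's proof essentially step by step. It uses the same bounded-functional argument on $L^2(0,T;L^{m+1})$ for the $\partial_t\varphi$ term, and the same splitting $\varphi_{n_j}=\varphi+(\varphi_{n_j}-\varphi)$ in the energy term combined with Lemma~\ref{ParaTestStrong}. The only cosmetic difference is that you bound the error piece with the full $L^2(0,T;V)$ bound \eqref{vNUB}, whereas the paper uses the energy-only bound \eqref{eq:esumu0}; this changes only the constant.
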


\begin{proof}
	Let \(\{n_j\}_{j\ge1}\subset\mathbb{N}\) be the subsequence for which both \eqref{eq:u-weak} and \eqref{eq:v-weak} hold.
	
	Since \eqref{eq:discrete-id} holds for each \(n\ge1\), for any \(j\ge1\),
	\begin{equation}
		-\int_0^T\int_M u_{n_j}\,\partial_t\varphi\,d\mu\,dt
		+\int_0^T \mathcal{E}(v_{n_j}(t),\varphi_{n_j}(t))\,dt
		= \int_M u_0\,\varphi(0)\,d\mu. \label{eq:nj-weakform}
	\end{equation}
	
	Since \(\varphi\in C_c^1([0,T);V)\), we have
	\[
	\partial_t\varphi\in C_c([0,T);V)\subset L^2(0,T;V)\subset L^2(0,T;L^{1+1/m}).
	\]
	Thus, for any \(\tilde{u}\in L^2(0,T;L^{m+1})\), by H\"older's inequality,
	\[
	\left|\int_0^T\int_M \tilde{u}\,\partial_t\varphi\,d\mu\,dt\right|
	\le \|\tilde{u}\|_{L^2(0,T;L^{m+1})}
	\|\partial_t\varphi\|_{L^2(0,T;L^{1+1/m})}.
	\]
	Hence the linear functional
	\[
	\tilde{u}\mapsto \int_0^T\int_M \tilde{u}\,\partial_t\varphi\,d\mu\,dt
	\]
	is bounded on \(L^2(0,T;L^{m+1})\). Therefore,
	\begin{equation}
		\lim_{j\to\infty}\int_0^T\int_M u_{n_j}\,\partial_t\varphi\,d\mu\,dt
		= \int_0^T\int_M u\,\partial_t\varphi\,d\mu\,dt. \label{eq:nj1-weak}
	\end{equation}
	
	We now claim that
	\begin{equation}
		\lim_{j\to\infty}\int_0^T \mathcal{E}(v_{n_j}(t),\varphi_{n_j}(t))\,dt
		= \int_0^T \mathcal{E}(v(t),\varphi(t))\,dt. \label{eq:nj2-weak}
	\end{equation}
	Indeed,
	\[
	\int_0^T \mathcal{E}(v_{n_j}(t),\varphi_{n_j}(t))\,dt
	= \int_0^T \mathcal{E}(v_{n_j}(t),\varphi(t))\,dt
	+ \int_0^T \mathcal{E}(v_{n_j}(t),\varphi_{n_j}(t)-\varphi(t))\,dt.
	\]
	For the first term at the right hand side of the equality above, note that for any \(\tilde{v}\in L^2(0,T;V)\),
	\[
	\left|\int_0^T \mathcal{E}(\tilde{v}(t),\varphi(t))\,dt\right|
	\le \|\tilde{v}\|_{L^2(0,T;V)}\|\varphi\|_{L^2(0,T;V)},
	\]
	so the linear functional \(\tilde{v}\mapsto \int_0^T \mathcal{E}(\tilde{v}(t),\varphi(t))\,dt\) is bounded on \(L^2(0,T;V)\). Since \(v_{n_j}\rightharpoonup v\) in \(L^2(0,T;V)\),
	\begin{equation}
		\lim_{j\to\infty}\int_0^T \mathcal{E}(v_{n_j}(t),\varphi(t))\,dt
		= \int_0^T \mathcal{E}(v(t),\varphi(t))\,dt. \label{njv}
	\end{equation}
	For the second term, by H\"older's inequality, \eqref{eq:esumu0}, and Lemma~\ref{ParaTestStrong},
	\begin{align}
		&\left|\int_0^T \mathcal{E}(v_{n_j}(t),\varphi_{n_j}(t)-\varphi(t))\,dt\right| \notag \\
		&\le \int_0^T \sqrt{\mathcal{E}(v_{n_j}(t))}
		\|\varphi_{n_j}(t)-\varphi(t)\|_V\,dt \notag \\
		&\le \left(\int_0^T \mathcal{E}(v_{n_j}(t))\,dt\right)^{1/2}
		\|\varphi_{n_j}-\varphi\|_{L^2(0,T;V)} \notag \\
		&\le \sqrt{\frac{\|u_0\|_{L^{m+1}}^{m+1}}{m+1}}
		\cdot \frac{T}{\sqrt{n_j}}
		\|\partial_t\varphi\|_{L^2(0,T;V)}. \label{eq:Evvarphi}
	\end{align}
	Thus
	\[
	\lim_{j\to\infty}\int_0^T \mathcal{E}(v_{n_j}(t),\varphi_{n_j}(t)-\varphi(t))\,dt = 0.
	\]
	Combining this with \eqref{njv} proves \eqref{eq:nj2-weak}.
	
	Finally, letting \(j\to\infty\) in \eqref{eq:nj-weakform} and using \eqref{eq:nj1-weak} and \eqref{eq:nj2-weak}, we obtain \eqref{eq:weak-form}.
\end{proof}

\subsection{Regularity improvement of $u$ in time}

We now establish that the limit function \(u\) satisfies the time regularity required in Definition~\ref{def:weaksol}, namely \(u\in W^{1,2}(0,T;V^*)\). This property is independent of the derivation of the weak formulation and serves only to verify that the limiting object belongs to the correct function space. The following proposition identifies the weak time derivative of \(u\) with the limit \(w\) from \eqref{eq:w-weak}.

\begin{proposition}
	\label{prop:wd}
	The function \(w\) is the weak derivative of \(u\); that is, for any \(f\in C_c^\infty((0,T))\) and any \(v\in V\),
	\begin{equation} \label{eq:wd}
		\int_0^T \langle u(t), f'(t)v\rangle_V\,dt
		= -\int_0^T \langle w(t), f(t)v\rangle_V\,dt.
	\end{equation}
\end{proposition}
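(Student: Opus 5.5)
The plan is to prove the identity at the discrete level for each $n$ and then pass to the limit $j\to\infty$ along the subsequence $n_j$ from \eqref{eq:u-weak} and \eqref{eq:w-weak}. Here $\langle u(t),\cdot\rangle_V$ means $\int_M u(t)\,\cdot\,d\mu$, using the embedding $L^{m+1}\hookrightarrow V^*$. Fix $f\in C_c^\infty((0,T))$ and $v\in V$, and let $\tilde u_n$ be the piecewise linear Rothe interpolant: $\tilde u_n(t)=u_{n,k-1}+(t-(k-1)T/n)\,w_n(t)$ on $((k-1)T/n,kT/n]$.

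\textbf{Step 1: discrete identity.} The function $t\mapsto \int_M \tilde u_n(t) v\,d\mu$ is Lipschitz with a.e. derivative $\langle w_n(t),v\rangle_V$. Integration by parts against $f$ (compact support, no boundary terms) gives
\[
\int_0^T \langle \tilde u_n(t), f'(t)v\rangle_V\,dt=-\int_0^T\langle w_n(t),f(t)v\rangle_V\,dt .
\]
Alternatively, one can use summation by parts directly on the piecewise constant $u_n$, as in the proof of Proposition~\ref{ParaWeak}. This produces the same identity with $f'$ replaced by difference quotients of $f$, plus an $O(1/n)$ error.

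\textbf{Step 2: $\tilde u_n-u_n\to0$.} On each subinterval we have $\tilde u_n-u_n=(t-kT/n)w_n$, so
\[
\|\tilde u_n-u_n\|_{L^2(0,T;V^*)}\le \frac{T}{n}\|w_n\|_{L^2(0,T;V^*)}\le \frac{T}{n}\sqrt{\|u_0\|^{m+1}_{L^{m+1}}/(m+1)}
\]
by \eqref{wNUB}. The map $\tilde u\mapsto\int_0^T\langle \tilde u(t),f'(t)v\rangle_V dt$ is bounded on $L^2(0,T;V^*)$, so $\tilde u_n$ may be replaced by $u_n$ in the left side of the identity up to an error tending to zero.

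\textbf{Step 3: passing to the limit.} The map $\tilde u\mapsto\int_0^T\int_M \tilde u\,f'v\,d\mu\,dt$ is a bounded linear functional on $L^2(0,T;L^{m+1})$ by Hölder's inequality, since $f'v\in L^2(0,T;L^{1+1/m})$. By \eqref{eq:u-weak}, the left side therefore converges to $\int_0^T\langle u(t),f'(t)v\rangle_V dt$. Similarly, $\mathbf w\mapsto\int_0^T\langle \mathbf w(t),f(t)v\rangle_V dt$ is bounded on $L^2(0,T;V^*)$ (it is pairing with $fv\in L^2(0,T;V)=L^2(0,T;V^*)^*$, using reflexivity of $V$). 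By \eqref{eq:w-weak}, the right side converges to $-\int_0^T\langle w(t),f(t)v\rangle_V dt$.

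\textbf{Main obstacle.} The main care point is Step 1: making the interpolation rigorous with $V^*$-valued functions, and checking that $\langle u_{n,k}-u_{n,k-1},v\rangle$ matches $\frac Tn\langle w_n,v\rangle$ exactly. This holds by the definition \eqref{wN}. Everything else is bounded linear functionals combined with weak convergence.
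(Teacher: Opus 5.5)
Your proof is correct, but it is organized differently from the paper's.

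The paper never introduces a second interpolant. It works directly with the piecewise constant $u_n$ and computes $\int_0^T\langle u_n,f'v\rangle_V\,dt=\sum_k s_{n,k}(f_k-f_{k-1})$ exactly. On the $w_n$ side it replaces the cell average of $f$ by $f_{k-1}$, so that a summation by parts matches the two sides up to an explicit remainder $I_2(n)$. That remainder is then bounded step by step, using \eqref{eq:vastnorm} together with a Cauchy--Schwarz inequality over $k$ and \eqref{eq:esumu0}, which gives $O(1/n)$.

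You instead use the piecewise linear Rothe interpolant $\tilde u_n$. For it the identity holds exactly, because $t\mapsto\int_M\tilde u_n(t)v\,d\mu$ is continuous and piecewise linear with slope $\langle w_n(t),v\rangle_V$. You then move the whole discrepancy into the estimate $\|\tilde u_n-u_n\|_{L^2(0,T;V^*)}\le (T/n)\|w_n\|_{L^2(0,T;V^*)}$. This needs only the aggregate bound \eqref{wNUB}, not the per-step bounds.

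Both routes give an $O(1/n)$ error followed by the same passage to the limit along $n_j$. Yours is shorter and isolates the only quantitative input, namely the $L^2(0,T;V^*)$ bound on $w_n$. The paper's is a self-contained discrete computation that avoids a second approximating family.

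A minor remark on Step 3: you do not need the duality identification $L^2(0,T;V^*)^*=L^2(0,T;V)$. The H\"older estimate $\bigl|\int_0^T\langle \mathbf w(t),f(t)v\rangle_V\,dt\bigr|\le\|\mathbf w\|_{L^2(0,T;V^*)}\|f\|_{L^2(0,T)}\|v\|_V$ already shows that the functional is bounded on $L^2(0,T;V^*)$.
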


\begin{proof}
	For each \(n\ge1\) and each \(0\le k\le n\), set
	\[
	s_{n,k}:=\langle u_{n,k}, v\rangle_V,\qquad f_k:=f\left(\frac{kT}{n}\right),
	\]
	and note that \(f_0=f_n=0\). Then
	\begin{align}
		\int_0^T \langle u_n(t), f'(t)v\rangle_V\,dt
		&= \sum_{k=1}^n \int_{(k-1)T/n}^{kT/n}
		\langle u_{n,k}, f'(t)v\rangle_V\,dt \notag \\
		&= \sum_{k=1}^n \langle u_{n,k}, v\rangle_V
		\int_{(k-1)T/n}^{kT/n} f'(t)\,dt \notag \\
		&= \sum_{k=1}^n s_{n,k}\left(f_k-f_{k-1}\right)
		= \sum_{k=1}^{n-1} s_{n,k}f_k - \sum_{k=1}^n s_{n,k}f_{k-1}. \label{eq:wd1}
	\end{align}
	On the other hand,
	\begin{align}
		-\int_0^T \langle w_n(t), f(t)v\rangle_V\,dt
		&= -\sum_{k=1}^n \int_{(k-1)T/n}^{kT/n}
		\left\langle \frac{u_{n,k}-u_{n,k-1}}{T/n}, f(t)v\right\rangle_V\,dt \notag \\
		&= -\sum_{k=1}^n (s_{n,k}-s_{n,k-1})
		\int_{(k-1)T/n}^{kT/n} \frac{f(t)}{T/n}\,dt \notag \\
		&= -\sum_{k=1}^n (s_{n,k}-s_{n,k-1})
		\left(f_{k-1} + \int_{(k-1)T/n}^{kT/n} \frac{f(t)-f_{k-1}}{T/n}\,dt\right) \notag \\
		&= \sum_{k=1}^n (s_{n,k-1}-s_{n,k})f_{k-1}
		+ \sum_{k=1}^n \frac{s_{n,k-1}-s_{n,k}}{T/n}
		\int_{(k-1)T/n}^{kT/n}\int_{(k-1)T/n}^t f'(s)\,ds\,dt \notag \\
		&=: I_1(n) + I_2(n). \label{eq:wd2}
	\end{align}
	Since \(f_0=f_n=0\), we have
	\[
	I_1(n)=\sum_{k=1}^n (s_{n,k-1}-s_{n,k})f_{k-1}
	= \sum_{k=1}^{n-1} s_{n,k}f_k - \sum_{k=1}^n s_{n,k}f_{k-1}
	= \int_0^T \langle u_n(t), f'(t)v\rangle_V\,dt.
	\]
	Comparing this with \eqref{eq:wd1} and \eqref{eq:wd2}, we obtain
	\begin{equation} \label{eq:wdlim}
		\int_0^T \langle u_n(t), f'(t)v\rangle_V\,dt + I_2(n)
		= -\int_0^T \langle w_n(t), f(t)v\rangle_V\,dt,
	\end{equation}
	where
	\[
	I_2(n):=\sum_{k=1}^n \frac{s_{n,k-1}-s_{n,k}}{T/n}
	\int_{(k-1)T/n}^{kT/n}\int_{(k-1)T/n}^t f'(s)\,ds\,dt.
	\]
	
	We now show that \(I_2(n)\to0\) as \(n\to\infty\). Indeed,
	\begin{align}
		|I_2(n)|
		&\le \sum_{k=1}^n \left|\frac{s_{n,k-1}-s_{n,k}}{T/n}\right|
		\int_{(k-1)T/n}^{kT/n}\int_{(k-1)T/n}^t |f'(s)|\,ds\,dt \notag \\
		&= \sum_{k=1}^n \left|\left\langle \frac{u_{n,k-1}-u_{n,k}}{T/n}, v\right\rangle_V\right|
		\int_{(k-1)T/n}^{kT/n}\int_{(k-1)T/n}^t |f'(s)|\,ds\,dt \notag \\
		&\le \sum_{k=1}^n \left\|\frac{u_{n,k-1}-u_{n,k}}{T/n}\right\|_{V^*}\|v\|_V
		\left(\|f'\|_{C[0,T]}\frac{T^2}{n^2}\right) \notag \\
		&\le \frac{T^2\|v\|_V\|f'\|_{C[0,T]}}{n^2}
		\sum_{k=1}^n \sqrt{\mathcal{E}(\Psi(u_{n,k}))}, \label{eq:wdI2}
	\end{align}
	where we used \eqref{eq:vastnorm} in the last inequality.
	
	By Cauchy--Schwarz inequality and \eqref{eq:esumu0},
	\[
	\sum_{k=1}^n \sqrt{\mathcal{E}(\Psi(u_{n,k}))}
	\le \sqrt{n} \left(\sum_{k=1}^n \mathcal{E}(\Psi(u_{n,k}))\right)^{1/2}
	\le \sqrt{n}\sqrt{\frac{n}{T}\cdot \frac{\|u_0\|_{L^{m+1}}^{m+1}}{m+1}}.
	\]
	Substituting this into \eqref{eq:wdI2}, we get
	\[
	|I_2(n)|
	\le \frac{T^{3/2}\|v\|_V\|f'\|_{C[0,T]}}{n}
	\sqrt{\frac{\|u_0\|_{L^{m+1}}^{m+1}}{m+1}},
	\]
	so \(I_2(n)\to0\) as \(n\to\infty\).
	
	Since \eqref{eq:wdlim} holds for every \(n_j\), \(j\ge1\), we let \(j\to\infty\). Using \(u_{n_j}\rightharpoonup u\) in \(L^2(0,T;L^{m+1})\), \(w_{n_j}\rightharpoonup w\) in \(L^2(0,T;V^*)\), and \(I_2(n_j)\to0\), we obtain \eqref{eq:wd} from \eqref{eq:wdlim}.
\end{proof}

\begin{corollary}
	\label{cor;up}
	Let \(u\) be the weak limit in \eqref{eq:u-weak}. Then
	\[
	u\in L^\infty(0,T;L^{m+1})\cap W^{1,2}(0,T;V^*).
	\]
\end{corollary}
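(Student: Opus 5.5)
The corollary combines facts already established, so the plan is to assemble them rather than prove anything new. We need to show $u\in L^\infty(0,T;L^{m+1})$, $u\in L^2(0,T;V^*)$, and that the weak time derivative of $u$ exists and lies in $L^2(0,T;V^*)$.

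First, the $L^\infty$ part was already recorded after \eqref{eq:uast-weak}--\eqref{eq:w-weak}. The weak and weak-* limits coincide a.e., and $u^*\in L^\infty(0,T;L^{m+1})$. Hence $u\in L^\infty(0,T;L^{m+1})$, and weak-* lower semicontinuity of the norm together with \eqref{ustNUB} even gives $\|u\|_{L^\infty(0,T;L^{m+1})}\le\|u_0\|_{L^{m+1}}$.

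Second, we regard $u$ as $V^*$-valued through the continuous embedding $L^{m+1}\hookrightarrow V^*$ described before Proposition~\ref{prop:elliptic-comparison}. This embedding holds because $\|\phi\|_{L^{1+1/m}}\le\|\phi\|_V$, and its norm is at most $1$. Composing a strongly measurable $L^{m+1}$-valued function with a bounded linear map gives a strongly measurable $V^*$-valued function. Moreover $\|u(t)\|_{V^*}\le\|u(t)\|_{L^{m+1}}$, so $u\in L^\infty(0,T;V^*)\subset L^2(0,T;V^*)$.

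Third, Proposition~\ref{prop:wd} gives, for every $f\in C_c^\infty((0,T))$ and $v\in V$,
\[
\Bigl\langle \int_0^T f'(t)u(t)\,dt, v\Bigr\rangle_V = -\Bigl\langle \int_0^T f(t)w(t)\,dt, v\Bigr\rangle_V .
\]
To get this form, we pull the pairing inside the Bochner integrals (Bochner's theorem, as in \eqref{eq:bochner-h}). Since $v\in V$ is arbitrary, the identity $\int_0^T f'u\,dt=-\int_0^T fw\,dt$ holds in $V^*$. This is exactly the definition of the weak derivative in \cite[\S5.9.2]{Evans.2010.749}, so $u'=w\in L^2(0,T;V^*)$ by \eqref{eq:w-weak}. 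Therefore $u\in W^{1,2}(0,T;V^*)$.

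The only point needing care is this last step. We must check that testing against functionals of the form $\phi\mapsto\langle\phi,v\rangle_V$ with $v\in V$ separates points of $V^*$. This is immediate from the definition of the zero functional on $V$, so no reflexivity is even needed there.
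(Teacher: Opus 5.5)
Your proposal is correct and follows the same route as the paper. The paper gets $L^\infty(0,T;L^{m+1})$ by identifying the weak and weak-* limits, places $u$ in $L^2(0,T;V^*)$ via the embedding $L^{m+1}\hookrightarrow V^*$, and uses Proposition~\ref{prop:wd} to show that $w\in L^2(0,T;V^*)$ is the weak derivative. Your extra steps, namely strong measurability of the $V^*$-valued map and moving the pairing inside the Bochner integrals to obtain the identity in $V^*$, make explicit what the paper's short proof leaves implicit.
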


\begin{proof}
	It is immediate from \eqref{eq:uast-weak} and \eqref{eq:u-weak} that
	\[
	u\in L^\infty(0,T;L^{m+1})\cap L^2(0,T;L^{m+1})
	= L^\infty(0,T;L^{m+1}).
	\]
	
	Since \(V\hookrightarrow L^{1+1/m}\), we have \(L^{m+1}\hookrightarrow V^*\), and hence
	\[
	L^2(0,T;L^{m+1})\hookrightarrow L^2(0,T;V^*).
	\]
	By Proposition~\ref{prop:wd}, the weak derivative of \(u\) exists and lies in \(L^2(0,T;V^*)\). Therefore \(u\in W^{1,2}(0,T;V^*)\).
\end{proof}

\subsection{Identification of the limit via the Minty trick}
\label{sub:unicom}

In the preceding subsections, we have obtained the weak formulation \eqref{eq:weak-form} (Proposition~\ref{prop:weak-form}) and established the required time regularity of \(u\). However, \eqref{eq:weak-form} does not yet identify \(v\) with \(\Psi(u)=|u|^{m-1}u\). To prove this identification, we employ the classical Minty trick. As a first step, we extend the weak formulation to a larger class of test functions.

Set
\[
\mathcal{H}:= \{\varphi\in W^{1,2}(0,T;V) : \varphi(T,\cdot)=0 \ \mu\text{-a.e.}\}.
\]
Since \(C_c^1([0,T);V)\) is dense in \(\mathcal{H}\) by the standard density argument for Bochner--Sobolev spaces, the following result follows immediately.

\begin{proposition}
	\label{prop:ext}
	The identity \eqref{eq:weak-form} holds for every \(\varphi\in\mathcal{H}\).
\end{proposition}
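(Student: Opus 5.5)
The plan is a density-plus-continuity argument. Given $\varphi\in\mathcal{H}$, we construct a sequence $\{\varphi_k\}_{k\ge1}\subset C_c^1([0,T);V)$ such that $\varphi_k\to\varphi$ in $W^{1,2}(0,T;V)$. Along this sequence each of the three terms in \eqref{eq:weak-form} is continuous, so the identity, valid for each $\varphi_k$ by Proposition~\ref{prop:weak-form}, passes to the limit.

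\textbf{Continuity of the three terms.} The right-hand side is bounded by $\|u_0\|_{L^{m+1}}\|\varphi(0)\|_{L^{1+1/m}}\le \|u_0\|_{L^{m+1}}\|\varphi(0)\|_V$. The continuous embedding $W^{1,2}(0,T;V)\hookrightarrow C([0,T];V)$ (cf. \cite[Theorem~2, \S5.9.2]{Evans.2010.749}) gives $\|\varphi_k(0)-\varphi(0)\|_V\le C\|\varphi_k-\varphi\|_{W^{1,2}(0,T;V)}\to0$. For the time-derivative term, H\"older's inequality as in the proof of Proposition~\ref{prop:weak-form} gives
\[
\Bigl|\int_0^T\!\!\int_M u\,\partial_t(\varphi_k-\varphi)\,d\mu\,dt\Bigr|\le \|u\|_{L^2(0,T;L^{m+1})}\|\partial_t(\varphi_k-\varphi)\|_{L^2(0,T;V)}.
\]
The energy term is bounded by $\|v\|_{L^2(0,T;V)}\|\varphi_k-\varphi\|_{L^2(0,T;V)}$, using $|\mathcal{E}(a,b)|\le\|a\|_V\|b\|_V$. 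All three differences therefore tend to zero.

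\textbf{Construction of the approximating sequence.} Set $\varphi(t)=0$ for $t\ge T$. Since $\varphi(T)=0$, the extension lies in $W^{1,2}(0,T+1;V)$, and its weak derivative is $\partial_t\varphi$ on $(0,T)$ and $0$ afterwards; the gluing is legitimate because $\varphi$ is absolutely continuous with values in $V$. Then shift and mollify:
\begin{itemize}
\item Let $\psi_\delta(t):=\varphi(t+\delta)$ for $t\in[0,T]$. It vanishes on $[T-\delta,T]$ and converges to $\varphi$ in $W^{1,2}(0,T;V)$ as $\delta\to0$, by continuity of translations in Bochner $L^2$.
\item To handle the left endpoint, extend $\varphi$ to $t<0$ by even reflection, which stays in $W^{1,2}$.
\item Mollify in time with a kernel of radius smaller than $\delta/2$. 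This yields $C^\infty$ $V$-valued functions supported in $[0,T-\delta/2]$ when restricted to $[0,T]$, hence elements of $C_c^1([0,T);V)$. Mollification converges in $W^{1,2}$ on $[0,T]$ because it commutes with the weak derivative.
\end{itemize}
A diagonal choice of $\delta$ and the mollification radius gives $\varphi_k$.

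\textbf{Main obstacle.} The delicate point is the bookkeeping of this density construction near the endpoints. One must check that reflection at $0$ and zero-extension past $T$ preserve $W^{1,2}$, and that the shifted and mollified functions keep compact support strictly inside $[0,T)$. These are standard for Bochner–Sobolev spaces over the Banach space $V$ and require no structure of $V$ beyond completeness (Proposition~\ref{prop:Vq_Banach}).
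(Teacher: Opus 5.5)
Your proposal is correct and follows the same route as the paper. Both approximate $\varphi\in\mathcal{H}$ in $W^{1,2}(0,T;V)$ by elements of $C_c^1([0,T);V)$, then pass to the limit in each term, using boundedness of the linear functional on $W^{1,2}(0,T;V)$ and the embedding $W^{1,2}(0,T;V)\hookrightarrow C([0,T];V)$ for the value at $t=0$. The only difference is that you spell out the density construction (zero extension past $T$, shift, mollification), which the paper just invokes as standard; your even-reflection step is harmless but redundant, since $\varphi(\cdot+\delta)$ is already defined on $[-\delta,T]$.
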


\begin{proof}
	Fix \(\varphi\in\mathcal{H}\) and let \(\{\varphi_{(n)}\}_{n\ge1}\subset C_c^1([0,T);V)\) be a sequence approximating \(\varphi\) in \(\mathcal{H}\). By Proposition~\ref{prop:weak-form}, for each \(n\ge1\),
	\begin{equation}
		-\int_0^T\int_M u\,\partial_t\varphi_{(n)}\,d\mu\,dt
		+\int_0^T \mathcal{E}(v(t,\cdot),\varphi_{(n)}(t,\cdot))\,dt
		= \int_M u_0\,\varphi_{(n)}(0,\cdot)\,d\mu. \label{eq:varphi_n}
	\end{equation}
	Since \(\varphi\cup\{\varphi_{(n)}\}_{n\ge1}\subset W^{1,2}(0,T;V)\), it follows from \cite[Theorem 2, \S~5.9.2]{Evans.2010.749} that there exists a constant \(C>0\) such that, for every \(n\ge1\),
	\begin{equation}
		\|\varphi_{(n)}(0)-\varphi(0)\|_V
		\le \max_{t\in[0,T]}\|\varphi_{(n)}(t)-\varphi(t)\|_V
		\le C\|\varphi_{(n)}-\varphi\|_{W^{1,2}(0,T;V)}. \label{eq:varphi_nn}
	\end{equation}
	On the other hand, the linear functional
	\[
	\varphi \mapsto -\int_0^T\int_M u\,\partial_t\varphi\,d\mu\,dt
	+\int_0^T \mathcal{E}(v(t,\cdot),\varphi(t,\cdot))\,dt
	\]
	is bounded on \(W^{1,2}(0,T;V)\). Letting \(n\to\infty\) in \eqref{eq:varphi_n} and using \eqref{eq:varphi_nn}, we obtain the desired result.
\end{proof}

Before applying the classical Minty trick to prove that \(v=\Psi(u)\) in the sense that \(v(t,\cdot)=\Psi(u(t,\cdot))\) for a.e. \(t\in(0,T)\), we need the following auxiliary inequality. It compares the space-time integral \(\int_0^T\int_M uv\,d\mu\,dt\) with the upper limit of the discrete integrals \(\int_0^T\int_M u_{n_j}v_{n_j}\,d\mu\,dt\).

\begin{proposition}
	\label{prop:liminf}
	Let \(\{u_{n_j}\}_{j\ge1}\) and \(\{v_{n_j}\}_{j\ge1}\) be the sequences from \eqref{eq:u-weak} and \eqref{eq:v-weak}, respectively. Then
	\begin{equation}
		\liminf_{j\to\infty}\int_0^T\int_M u_{n_j}v_{n_j}\,d\mu\,dt
		\le \int_0^T\int_M uv\,d\mu\,dt. \label{eq:liminf}
	\end{equation}
\end{proposition}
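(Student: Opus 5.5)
The plan is to compare two energy-type identities, one discrete and one continuous, both obtained by testing the equation with the time-integrated nonlinearity $\int_t^T(\cdot)\,ds$. Along the way we get the stronger statement with $\limsup$ in place of $\liminf$.

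\emph{Continuous side.} Set $\Phi(t):=\int_t^T v(s)\,ds$. Since $v\in L^2(0,T;V)$, we have $\Phi\in W^{1,2}(0,T;V)$ with $\partial_t\Phi=-v$ and $\Phi(T)=0$, so $\Phi\in\mathcal{H}$ and Proposition~\ref{prop:ext} applies. The time-derivative term becomes $\int_0^T\int_M uv\,d\mu\,dt$. For the energy term, use Lemma~\ref{lem:fubini} with $\mathcal{B}=\mathcal{E}$ on $X=V$; the bound \eqref{eq:bounded-bilinear} holds with $C=1$. Symmetrizing the double integral over $\{s>t\}$ gives
\[
\int_0^T\mathcal{E}\Big(v(t),\int_t^T v(s)\,ds\Big)dt=\tfrac12\,\mathcal{E}\Big(\int_0^T v\,dt\Big).
\]
Hence
\[
\int_0^T\!\!\int_M uv\,d\mu\,dt=\int_M u_0\int_0^T v\,dt\,d\mu-\tfrac12\,\mathcal{E}\Big(\int_0^T v\,dt\Big).
\]

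\emph{Discrete side.} Fix $n$, write $\tau=T/n$, $v_k=\Psi(u_{n,k})$, and set $\Phi_k:=\tau\sum_{i=k}^n v_i\in V$ with $\Phi_{n+1}=0$. Test \eqref{eq:corollary-weak} at step $k$ with $\phi=\Phi_k$ and sum over $k$. Summation by parts, using $\Phi_k-\Phi_{k+1}=\tau v_k$, gives
\[
\sum_k\int_M(u_{n,k}-u_{n,k-1})\Phi_k\,d\mu=\tau\sum_k\int_M u_{n,k}v_k\,d\mu-\int_M u_0\Phi_1\,d\mu .
\]
The first term on the right equals $\int_0^T\int_M u_nv_n\,d\mu\,dt$, and $\Phi_1=\int_0^T v_n\,dt$. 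For the energy term, the discrete analogue of the symmetrization is
\[
\tau\sum_k\mathcal{E}\Big(v_k,\tau\sum_{i\ge k}v_i\Big)=\tfrac12\,\mathcal{E}\Big(\tau\sum_k v_k\Big)+\tfrac{\tau^2}{2}\sum_k\mathcal{E}(v_k)\ \ge\ \tfrac12\,\mathcal{E}\Big(\int_0^T v_n\,dt\Big).
\]
This yields
\[
\int_0^T\!\!\int_M u_nv_n\,d\mu\,dt\le\int_M u_0\int_0^T v_n\,dt\,d\mu-\tfrac12\,\mathcal{E}\Big(\int_0^T v_n\,dt\Big).
\]

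\emph{Passage to the limit.} Apply Lemma~\ref{lem:bochner-weak} with $I=[0,T]$, $X=V$ and $\mathcal{B}=\mathcal{E}$ along the subsequence \eqref{eq:v-weak}. This gives $\int_0^T v_{n_j}\,dt\rightharpoonup\int_0^T v\,dt$ in $V$. Since $u_0\in L^{m+1}\hookrightarrow V^*$, the first term on the right converges to $\int_M u_0\int_0^T v\,dt\,d\mu$. By \eqref{eq:Eliminf}, $\liminf_j\mathcal{E}\big(\int_0^T v_{n_j}\,dt\big)\ge\mathcal{E}\big(\int_0^T v\,dt\big)$. Therefore the $\limsup$ of the left-hand side is at most
\[
\int_M u_0\int_0^T v\,dt\,d\mu-\tfrac12\,\mathcal{E}\Big(\int_0^T v\,dt\Big),
\]
which equals $\int_0^T\int_M uv\,d\mu\,dt$ by the continuous identity. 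This proves \eqref{eq:liminf}, and in fact the same bound for the $\limsup$.

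The main obstacle is the bookkeeping in the two symmetrization steps. In the continuous case one must justify splitting $\int_0^T\int_t^T\mathcal{E}(v(t),v(s))\,ds\,dt$ as exactly half of $\int_0^T\int_0^T\mathcal{E}(v(t),v(s))\,ds\,dt$. This follows from the symmetry of $\mathcal{E}$ and Lemma~\ref{lem:fubini} applied on the triangle, via $\mathbf{1}_{\{s>t\}}$. In the discrete case one must keep the diagonal term with the correct nonnegative sign, since discarding it is exactly what produces the inequality.
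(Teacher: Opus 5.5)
Your proof is correct. The continuous side is the same as the paper's: you test with $\int_t^T v\,ds\in\mathcal{H}$, apply Lemma~\ref{lem:fubini}, and symmetrize. The discrete side follows a genuinely different route.

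The paper does not return to the elliptic steps. It starts from the time-continuous discrete identity \eqref{eq:nj-weakform} and replaces $\varphi_{n_j}$ by $\varphi$ in the energy term, paying an error controlled by Lemma~\ref{ParaTestStrong} and \eqref{eq:esumu0}. It then extends the resulting inequality \eqref{eq:test-2} to all of $\mathcal{H}$ by density. Only after that does it insert $\varphi=\int_t^T v_{n_j}(s)\,ds$, symmetrize with the continuous Fubini identity, and arrive at \eqref{eq:test-202}, which carries a remainder $C_\#/\sqrt{n_j}$.

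Your $\Phi_k=\tau\sum_{i\ge k}v_i$ is exactly the paper's $\varphi_{n,k}$ for $\varphi=\int_t^T v_n(s)\,ds$, so both arguments use the same test function. The difference is that you compute the mismatch between $\varphi_n$ and $\varphi$ exactly instead of estimating it. Your discrete symmetrization shows this mismatch is the nonnegative numerical-dissipation term $\frac{\tau^2}{2}\sum_k\mathcal{E}(v_k)$, which can simply be dropped.

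\textbf{What each route buys.} Yours is more elementary and self-contained. It needs no density extension of an inequality to $\mathcal{H}$, no Lemma~\ref{ParaTestStrong}, and no use of the uniform bound \eqref{vNUB}. It also yields an exact discrete identity with no $O(n_j^{-1/2})$ remainder. The paper's route instead reuses the machinery already built for Proposition~\ref{prop:weak-form}, and it handles the approximate and limit problems through the same continuous-time Fubini identity.
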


\begin{proof}
	Set
	\[
	\varphi:=\int_t^T v(s)\,ds.
	\]
	One readily checks that \(\varphi\in\mathcal{H}\) and \(\partial_t\varphi=-v\). Hence, by Proposition~\ref{prop:ext},
	\begin{equation}
		\int_0^T\int_M uv\,d\mu\,dt
		+\int_0^T \mathcal{E}\left(v(t),\int_t^T v(s)\,ds\right)dt
		= \int_M u_0\int_0^T v(s)\,ds\,d\mu. \label{eq:test-1}
	\end{equation}
	By Lemma~\ref{lem:fubini}, Fubini's theorem, and the symmetry of \(\mathcal{E}\),
	\begin{align}
		\int_0^T \mathcal{E}\left(v(t),\int_t^T v(s)\,ds\right)dt
		&= \int_0^T\int_t^T \mathcal{E}(v(t),v(s))\,ds\,dt \notag \\
		&= \frac{1}{2}\int_0^T\int_0^T \mathcal{E}(v(t),v(s))\,ds\,dt \notag \\
		&= \frac{1}{2}\mathcal{E}\left(\int_0^T v(t)\,dt\right).
	\end{align}
	Substituting this into \eqref{eq:test-1} yields
	\begin{equation}
		\int_0^T\int_M uv\,d\mu\,dt
		= \int_M u_0\int_0^T v(s)\,ds\,d\mu
		-\frac{1}{2}\mathcal{E}\left(\int_0^T v(t)\,dt\right). \label{eq:test-101}
	\end{equation}
	
	We now estimate \(\int_0^T\int_M u_{n_j}v_{n_j}\,d\mu\,dt\). From \eqref{eq:nj-weakform}, we have
	\begin{align}
		&-\int_0^T\int_M u_{n_j}\,\partial_t\varphi\,d\mu\,dt
		+\int_0^T \mathcal{E}(v_{n_j}(t),\varphi(t))\,dt \notag \\
		&\le -\int_0^T\int_M u_{n_j}\,\partial_t\varphi\,d\mu\,dt
		+\int_0^T \mathcal{E}(v_{n_j}(t),\varphi_{n_j}(t))\,dt 
		+\left|\int_0^T \mathcal{E}(v_{n_j}(t),\varphi_{n_j}(t)-\varphi(t))\,dt\right| \notag \\
		&\le \int_M u_0\,\varphi(0,\cdot)\,d\mu
		+\sqrt{\frac{\|u_0\|_{L^{m+1}}^{m+1}}{m+1}}
		\cdot \frac{T}{\sqrt{n_j}}
		\|\partial_t\varphi\|_{L^2(0,T;V)}. \label{eq:test-2}
	\end{align}
	By the density argument used in the proof of Proposition~\ref{prop:ext}, the estimate \eqref{eq:test-2} holds for every \(\varphi\in\mathcal{H}\). Applying it with
	\[
	\varphi=\int_t^T v_{n_j}(s)\,ds
	\]
	and using \eqref{vNUB}, we obtain
	\begin{align}
		&\int_0^T\int_M u_{n_j}v_{n_j}\,d\mu\,dt
		+\int_0^T \mathcal{E}\left(v_{n_j}(t),\int_t^T v_{n_j}(s)\,ds\right)dt \notag \\
		&\le \int_M u_0\int_0^T v_{n_j}(s)\,ds\,d\mu
		+\sqrt{\frac{\|u_0\|_{L^{m+1}}^{m+1}}{m+1}}
		\cdot \frac{T}{\sqrt{n_j}}
		\|v_{n_j}\|_{L^2(0,T;V)} \notag \\
		&\le \int_M u_0\int_0^T v_{n_j}(s)\,ds\,d\mu
		+\frac{C_\#}{\sqrt{n_j}}, \label{eq:test-201}
	\end{align}
	where
	\[
	C_\#:=C_\#(m,T,\|u_0\|_{L^{m+1}})
	:=\sqrt{\frac{\|u_0\|_{L^{m+1}}^{m+1}}{m+1}}\,T\,C_*(m,T,\|u_0\|_{L^{m+1}})
	\]
	is a nonnegative constant depending only on \(m\), \(T\), and \(\|u_0\|_{L^{m+1}}\).
	
	Again, by Lemma~\ref{lem:fubini}, Fubini's theorem, and the symmetry of \(\mathcal{E}\),
	\[
	\int_0^T \mathcal{E}\left(v_{n_j}(t),\int_t^T v_{n_j}(s)\,ds\right)dt
	= \frac{1}{2}\mathcal{E}\left(\int_0^T v_{n_j}(s)\,ds\right).
	\]
	Thus, for each \(j\ge1\),
	\begin{equation}
		\int_0^T\int_M u_{n_j}v_{n_j}\,d\mu\,dt
		\le \int_M u_0\int_0^T v_{n_j}(s)\,ds\,d\mu
		+\frac{C_\#}{\sqrt{n_j}}
		-\frac{1}{2}\mathcal{E}\left(\int_0^T v_{n_j}(s)\,ds\right). \label{eq:test-202}
	\end{equation}
	Since \(v_{n_j}\rightharpoonup v\) in \(L^2(0,T;V)\), Lemma~\ref{lem:bochner-weak} gives
	\begin{equation}
		\int_0^T v_{n_j}(s)\,ds \rightharpoonup \int_0^T v(s)\,ds \quad\text{in } V,
		\label{eq:intweak}
	\end{equation}
	and hence, by \eqref{eq:Eliminf},
	\begin{equation}
		\mathcal{E}\left(\int_0^T v(s)\,ds\right)
		\le \liminf_{j\to\infty}\mathcal{E}\left(\int_0^T v_{n_j}(s)\,ds\right). \label{eq:lscE}
	\end{equation}
	Letting \(j\to\infty\) in \eqref{eq:test-202} and using \eqref{eq:intweak}, \eqref{eq:lscE}, and \eqref{eq:test-101}, we obtain
	\[
	\limsup_{j\to\infty}\int_0^T\int_M u_{n_j}v_{n_j}\,d\mu\,dt
	\le \int_0^T\int_M uv\,d\mu\,dt,
	\]
	which proves \eqref{eq:liminf}.
\end{proof}

We now complete the proof of Theorem~\ref{thm:main}(i), in which we use the Minty trick to prove the existence.

\begin{proof}[Proof of Theorem~\ref{thm:main}(i)]

Let $u$ be the function obtained in \eqref{eq:u-weak}. It is suffice to show that $u$ is the unique weak solution of \eqref{eq:cauchy} with satisfying \eqref{eq:bound} and \eqref{eq:boundv}. We first prove that \(v(t)=\Psi(u(t))\) for a.e. \(t\in(0,T)\).

	
For every $j\ge1$, since $v_{n_j}(t)=\Psi(u_{n_j}(t))$ for any $0<t\le T$, the strict monotonicity of $\Psi$ implies that, for any $h\in L^\infty(0,T;L^{m+1})$, $\Psi(h)\in L^\infty(0,T;L^{1+1/m})$ and
\[
\int_0^T\int_M (u_{n_j}-h)(v_{n_j}-\Psi(h))\,d\mu\,dt \ge 0,
\]
equivalently,
\begin{equation}
	\int_0^T\int_M u_{n_j}v_{n_j}\,d\mu\,dt
	\ge \int_0^T\int_M h v_{n_j}\,d\mu\,dt
	+\int_0^T\int_M u_{n_j}\Psi(h)\,d\mu\,dt
	-\int_0^T\int_M h\Psi(h)\,d\mu\,dt.
	\label{eq:Minty1}
\end{equation}
Note that, the functionals
\[
\tilde u\mapsto \int_0^T\int_M \tilde u\,\Psi(h)\,d\mu\,dt,\qquad
\tilde v\mapsto \int_0^T\int_M \tilde v\,h\,d\mu\,dt
\]
are bounded on $L^2(0,T;L^{m+1})$ and $L^2(0,T;L^{1+1/m})$, respectively. Letting $j\to\infty$ in \eqref{eq:Minty1} and using \eqref{eq:liminf}, we get
\begin{eqnarray*}
\int_0^T\int_M uv\,d\mu\,dt &\ge&\limsup_{j\to\infty}\int_0^T\int_M u_{n_j}v_{n_j}\,d\mu\,dt\\
&\ge& \int_0^T\int_M h v\,d\mu\,dt
+\int_0^T\int_M u\Psi(h)\,d\mu\,dt
-\int_0^T\int_M h\Psi(h)\,d\mu\,dt,
\end{eqnarray*}
i.e.,
\begin{equation}
	\int_0^T\int_M (u-h)(v-\Psi(h))\,d\mu\,dt \ge 0
	\label{eq:Minty2}
\end{equation}
for every $h\in L^2(0,T;L^{m+1})$.

Taking $h=u\pm\lambda w$ with $\lambda>0$ and $w\in L^\infty(0,T;L^{m+1})$, and letting $\lambda\downarrow0$, we obtain
\begin{equation}\label{eq:minty-infty}
\int_0^T\int_M (v-\Psi(u))w\,d\mu\,dt = 0
\end{equation}
for all $w\in L^\infty(0,T;L^{m+1})$. Noting that $L^\infty(0,T;L^{m+1})$ is dense in $L^2(0,T;L^{m+1})$, it follows that \eqref{eq:minty-infty} also holds for any $w$ in $L^2(0,T;L^{m+1})$, thus implying that $v=\Psi(u)$ a.e. in $(0,T)\times M$. By Fubini's theorem, $v(t,\cdot)=\Psi(u(t,\cdot))$ $\mu$-a.e. for a.e. $t\in(0,T)$. Thus $u$ is a weak solution of \eqref{eq:1}.
	
	We now prove uniqueness. Let \(\hat u\) be another weak solution. Then, for every \(\varphi\in C_c^1([0,T);V)\),
	\begin{equation}
		-\int_0^T\int_M (u-\hat u)\partial_t\varphi\,d\mu\,dt
		+\int_0^T \mathcal{E}(\Psi(u(t))-\Psi(\hat u(t)),\varphi(t))\,dt = 0.
		\label{eq:uniq}
	\end{equation}
	By the same density argument as in Proposition~\ref{prop:ext}, \eqref{eq:uniq} holds for every \(\varphi\in\mathcal{H}\). Taking
	\[
	\varphi=\int_t^T (\Psi(u(s))-\Psi(\hat u(s)))\,ds
	\]
	and applying Lemma~\ref{lem:fubini}, we get
	\[
	\int_0^T\int_M (u-\hat u)(\Psi(u)-\Psi(\hat u))\,d\mu\,dt
	+\frac{1}{2}\mathcal{E}\left(\int_0^T (\Psi(u(s))-\Psi(\hat u(s)))\,ds\right)=0.
	\]
	Since the second term is nonnegative and \(\Psi\) is strictly monotone, we conclude \(u(t)=\hat u(t)\) for a.e. \(t\in(0,T)\).
	
	Corollary~\ref{cor;up} gives \(u\in L^\infty(0,T;L^{m+1})\cap W^{1,2}(0,T;V^*)\). Hence, it remains to show that $u$ satisfies \eqref{eq:bound} and \eqref{eq:boundv}. Indeed, noting that $u$ is the weak-* limits of $\{u_{n_j}\}_{j\geq 1}$ in $L^\infty(0,T;L^{m+1})$, we have by the weak-* lower semicontinuity of the norm in \(L^\infty(0,T;L^{m+1})\) and \eqref{ustNUB} that,
	\[
	\|u\|_{L^\infty(0,T;L^{m+1})}
	\le \liminf_{j\to\infty}\|u_{n_j}\|_{L^\infty(0,T;L^{m+1})}
	\le \|u_0\|_{L^{m+1}},
	\]
	thus proving \eqref{eq:bound}. Similarly, since $\Psi(u)=v$ almost everywhere and $v$ is the weak limit of $\{v_{n_j}\}_{j\geq 1}$ in \(L^2(0,T;V)\), \eqref{eq:boundv} can be obtained by using the weak lower semicontinuity of the norm in \(L^2(0,T;V)\) and \eqref{vNUB}.
\end{proof}

We now prove the comparison principle of \eqref{eq:cauchy}, i.e., Theorem~\ref{thm:main}(ii).


\begin{proof}[Proof of Theorem~\ref{thm:main}(ii)]
	Let \(\{\hat u_n\}_{n\ge1}\) and \(\{\tilde u_n\}_{n\ge1}\) be the approximate solutions constructed in Subsection~\ref{subsec:uvw} with initial data \(\hat u_0\) and \(\tilde u_0\), respectively. Since \(\hat u_0\le \tilde u_0\), Proposition~\ref{prop:elliptic-comparison} gives \(\hat u_n(t,\cdot)\le \tilde u_n(t,\cdot)\) for every \(t\in(0,T)\) and every \(n\ge1\).
	
	By the compactness argument in Section~\ref{Sec:Para}, there exists a common subsequence \(\{n_j\}_{j\ge1}\) such that
	\[
	\hat u_{n_j}\rightharpoonup \hat u,\qquad \tilde u_{n_j}\rightharpoonup \tilde u
	\quad\text{in } L^2(0,T;L^{m+1})
	\]
	as \(j\to\infty\). These limits are exactly the weak solutions with initial data \(\hat u_0\) and \(\tilde u_0\), respectively.
	
	Since \(\hat u_{n_j}\le \tilde u_{n_j}\) for every \(j\), for any nonnegative \(\varphi\in L^2(0,T;L^{1+1/m})\),
	\[
	\int_0^T\int_M (\hat u_{n_j}-\tilde u_{n_j})\varphi\,d\mu\,dt \le 0.
	\]
	Passing to the limit and using the weak convergences, we obtain
	\begin{equation}\label{eq:compare-test}
	\int_0^T\int_M (\hat u-\tilde u)\varphi\,d\mu\,dt \le 0
	\end{equation}
	for every nonnegative \(\varphi\in L^2(0,T;L^{1+1/m})\). 
	
	To conclude, we take $\varphi$ as a suitable approximation of the characteristic function of the set $\{\hat u>\tilde u\}$. More precisely, since $(M,d)$ is locally compact and separable, there exists a compact exhaustion $\{K_n\}_{n\geq 1}$---that is, for every $n\geq 1$, $K_n\subset M$ is compact and $K_n\subset K_{n+1}$, and $\cup_{n=1}^\infty K_n=M$. Hence, for every $n\geq 1$, define
	\[\varphi_n:=\mathbf{1}_{\{\hat u>\tilde u\}\cap\left([0,T]\times K_n\right)}.\]
	Since $\mu$ is a Radon measure $\varphi_n\in L^2(0,T;L^{1+1/m}))$ for every $n\geq 1$ and $\varphi_n\uparrow\mathbf{1}_{\{\hat u>\tilde u\}}$ as $n\to\infty$. Applying \eqref{eq:compare-test} with $\varphi_n$ for every $n\geq 1$ and then letting $n\to\infty$, we obtain by the monotone convergence theorem that
	\[
	\int_0^T\int_M (\hat u-\tilde u)_+\,d\mu\,dt \le 0.
	\]
	Hence $(\hat u-\tilde u)_+=0$ a.e. on $(0,T)\times M$, which implies $\hat u(t,\cdot)\le \tilde u(t,\cdot)$ $\mu$-a.e. for a.e. $t\in(0,T)$.

Finally, by Theorem~\ref{thm:main}(i), the zero function is the unique weak solution with zero initial datum. Applying the comparison principle above with $\hat u_0=0$ and $\tilde u_0=u_0$, we conclude that $u(t,\cdot)\ge0$ $\mu$-a.e. for almost all $t\in(0,T)$.
\end{proof}

\subsection{Stability with respect to initial data}
\label{subsec:stability}

In this subsection, we prove the continuous dependence of solutions on the initial data.

Let \(\{u_{0,k}\}_{k\ge1}\cup\{u_0\}\subset L^{m+1}(M,\mu)\) satisfy
\begin{equation}
	\lim_{k\to\infty}\|u_{0,k}-u_0\|_{L^{m+1}}=0. \label{eq:u0conv}
\end{equation}
Fix \(T>0\). Let \(u\) be the weak solution of \eqref{eq:cauchy} with initial datum \(u_0\), and for each \(k\ge1\), let \(u^{(k)}\) be the weak solution with initial datum \(u_{0,k}\). By Definition~\ref{def:weaksol}, for each \(k\ge1\),
\begin{equation}
	-\int_0^T\int_M (u^{(k)}-u)\varphi\,d\mu\,dt
	+\int_0^T \mathcal{E}(\Psi(u^{(k)})-\Psi(u),\varphi)\,dt
	= \int_M (u_{0,k}-u_0)\varphi(0)\,d\mu
	\label{eq:weakminus}
\end{equation}
for every \(\varphi\in C_c^1([0,T);V)\). By the density argument used in the proof of Proposition~\ref{prop:ext}, this identity extends to every \(\varphi\in\mathcal{H}\).

From Theorem~\ref{thm:main}(i) and Proposition~\ref{ParaUB}, we have the uniform bounds
\begin{equation}
	\|u^{(k)}\|_{L^\infty(0,T;L^{m+1})}\le \|u_{0,k}\|_{L^{m+1}},\qquad
	\|u\|_{L^\infty(0,T;L^{m+1})}\le \|u_0\|_{L^{m+1}},
	\label{eq:stab-bounds-linf}
\end{equation}
and
\begin{equation}
	\|\Psi(u^{(k)})\|_{L^2(0,T;V)}
	\le \sqrt{T\|u_{0,k}\|_{L^{m+1}}^{2m}+\frac{\|u_{0,k}\|_{L^{m+1}}^{m+1}}{m+1}},
	\qquad
	\|\Psi(u)\|_{L^2(0,T;V)}
	\le \sqrt{T\|u_0\|_{L^{m+1}}^{2m}+\frac{\|u_0\|_{L^{m+1}}^{m+1}}{m+1}}.
	\label{eq:stab-bounds-v}
\end{equation}
Combining these with \eqref{eq:u0conv}, we obtain uniform bounds: there exist constants \(C_0>0\), independent of \(T\), and \(C_T>0\), depending on \(T\), such that
\begin{align}
	\sup_{k\ge1}\|u^{(k)}\|_{L^\infty(0,T;L^{m+1})}&\le C_0, \qquad
	\|u\|_{L^\infty(0,T;L^{m+1})}\le C_0, \label{eq:hatu-bound}\\
	\sup_{k\ge1}\|\Psi(u^{(k)})\|_{L^2(0,T;V)}&\le C_T, \qquad
	\|\Psi(u)\|_{L^2(0,T;V)}\le C_T. \label{eq:hatu-boundv}
\end{align}

\begin{proposition}\label{prop:stab-weak}
	The sequence \(\{u^{(k)}\}_{k\ge1}\) converges weakly to \(u\) in \(L^2(0,T;L^{m+1})\).
\end{proposition}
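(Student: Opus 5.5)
We argue by a subsequence-uniqueness argument that mirrors the existence proof of Theorem~\ref{thm:main}(i). By \eqref{eq:hatu-bound} and \eqref{eq:hatu-boundv}, the sequence $\{u^{(k)}\}$ is bounded in $L^2(0,T;L^{m+1})$ and $\{\Psi(u^{(k)})\}$ is bounded in $L^2(0,T;V)$. Both spaces are reflexive, and $L^\infty(0,T;L^{m+1})$ is a dual space. It therefore suffices to show that every subsequence has a further subsequence $\{u^{(k_i)}\}$ whose weak limit in $L^2(0,T;L^{m+1})$ is $u$; the whole sequence then converges weakly to $u$. So we fix a subsequence and extract a further one such that
\[
u^{(k_i)}\rightharpoonup \bar u \ \text{in } L^2(0,T;L^{m+1}),\qquad \Psi(u^{(k_i)})\rightharpoonup \bar v \ \text{in } L^2(0,T;V),
\]
together with weak-* convergence in $L^\infty(0,T;L^{m+1})$.

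First, pass to the limit in the weak formulation. In the identity \eqref{eq:weakminus}, which holds for all $\varphi\in\mathcal{H}$ (the first term is understood with $\partial_t\varphi$), we let $i\to\infty$. The right-hand side tends to $0$, because $u_{0,k}\to u_0$ in $L^{m+1}$ and $\varphi(0)\in V\hookrightarrow L^{1+1/m}$. The left-hand side converges by the same bounded-functional argument used in Proposition~\ref{prop:weak-form}. Consequently the pair $(\bar u,\bar v)$ satisfies \eqref{eq:weak-form} with initial datum $u_0$ for every $\varphi\in\mathcal{H}$.

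Second, identify $\bar v=\Psi(\bar u)$ by the Minty trick, as in Proposition~\ref{prop:liminf}. Testing the limit identity with $\varphi=\int_t^T\bar v\,ds$ and using Lemma~\ref{lem:fubini} gives
\[
\int_0^T\!\!\int_M \bar u\bar v=\int_M u_0\int_0^T\bar v-\tfrac12\mathcal{E}\Big(\int_0^T\bar v\Big).
\]
Testing the equation for $u^{(k_i)}$ (exact, with no discretization error now) with $\varphi=\int_t^T\Psi(u^{(k_i)})\,ds$ gives
\[
\int_0^T\!\!\int_M u^{(k_i)}\Psi(u^{(k_i)})=\int_M u_{0,k_i}\int_0^T\Psi(u^{(k_i)})-\tfrac12\mathcal{E}\Big(\int_0^T\Psi(u^{(k_i)})\Big).
\]
In the first term on the right we write $u_{0,k_i}=u_0+(u_{0,k_i}-u_0)$. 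The error term tends to $0$ by strong convergence of $u_{0,k_i}$ combined with the bound \eqref{eq:hatu-boundv}, and the main term converges by Lemma~\ref{lem:bochner-weak}. For the energy term, Lemma~\ref{lem:bochner-weak} together with \eqref{eq:Eliminf} gives lower semicontinuity. Together these yield
\[
\limsup_i\int_0^T\!\!\int_M u^{(k_i)}\Psi(u^{(k_i)})\le\int_0^T\!\!\int_M\bar u\bar v.
\]
The monotonicity argument from the proof of Theorem~\ref{thm:main}(i), with test functions $h$ and $h=\bar u\pm\lambda w$, then gives $\bar v=\Psi(\bar u)$.

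Third, conclude. By the two previous steps $\bar u$ is a weak solution with datum $u_0$. It has the required regularity: it lies in $L^\infty(0,T;L^{m+1})$ by the weak-* limit, and in $W^{1,2}(0,T;V^*)$ because the time derivative is given by the weak formulation and is bounded in $L^2(0,T;V^*)$ by $\sqrt{\mathcal{E}}$ bounds as in \eqref{eq:vastnorm}. By the uniqueness part of Theorem~\ref{thm:main}(i), $\bar u=u$.

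The main obstacle is the $\limsup$ inequality in the second step, specifically controlling the cross term with the perturbed initial data. This is handled by splitting off $u_{0,k}-u_0$ and using the uniform bound $C_T$. A secondary point is checking $W^{1,2}(0,T;V^*)$ regularity of $\bar u$. If that proves awkward, we would note instead that the uniqueness proof only used the weak formulation on $\mathcal{H}$ and $\Psi(\cdot)\in L^2(0,T;V)$, so it applies directly to $\bar u$.
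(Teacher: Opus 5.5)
Your proposal is correct and follows essentially the same route as the paper. You extract weakly convergent subsequences, pass to the limit in the weak formulation, get the $\limsup$ inequality by testing with $\int_t^T(\cdot)\,ds$ and using lower semicontinuity of $\mathcal{E}$, apply the Minty trick, invoke uniqueness, and conclude by the subsequence principle. The only differences are minor: you test the equation for $u^{(k_i)}$ itself and split off $u_{0,k_i}-u_0$ where the paper uses the difference equation \eqref{eq:weakminus}, and you also check that the limit lies in $L^\infty(0,T;L^{m+1})\cap W^{1,2}(0,T;V^*)$, which the paper leaves implicit.
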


\begin{proof}
	From \eqref{eq:hatu-bound} and H\"{o}lder's inequality, \(\{u^{(k)}\}_{k\ge1}\) is bounded in \(L^2(0,T;L^{m+1})\); from \eqref{eq:hatu-boundv}, \(\{\Psi(u^{(k)})\}_{k\ge1}\) is bounded in \(L^2(0,T;V)\). Hence there exist a subsequence \(\{k_j\}_{j\ge1}\subset\mathbb{N}\) with \(k_j\uparrow\infty\), and functions
	\[
	\tilde u\in L^2(0,T;L^{m+1}),\qquad \tilde v\in L^2(0,T;V),
	\]
	such that
	\begin{align}
		u^{(k_j)} &\rightharpoonup \tilde u &&\text{in } L^2(0,T;L^{m+1}), \label{eq:hatu-weak} \\
		\Psi(u^{(k_j)}) &\rightharpoonup \tilde v &&\text{in } L^2(0,T;V). \label{eq:hatv-weak}
	\end{align}
	Letting \(j\to\infty\) in \eqref{eq:weakminus} with \(k=k_j\), and using \eqref{eq:hatu-weak}, \eqref{eq:hatv-weak}, and \eqref{eq:u0conv}, we obtain
	\[
	-\int_0^T\int_M (\tilde u-u)\varphi\,d\mu\,dt
	+\int_0^T \mathcal{E}(\tilde v-\Psi(u),\varphi)\,dt=0
	\]
	for every \(\varphi\in\mathcal{H}\). Taking
	\[
	\varphi(t):=\int_t^T (\tilde v(s)-\Psi(u(s)))\,ds
	\]
	yields
	\begin{equation}
		\int_0^T\int_M (\tilde u-u)(\tilde v-\Psi(u))\,d\mu\,dt
		+\frac{1}{2}\mathcal{E}\left(\int_0^T (\tilde v(s)-\Psi(u(s)))\,ds\right)=0.
		\label{eq:stab1}
	\end{equation}
	
	On the other hand, for each \(k\ge1\), taking
	\[
	\varphi(t):=\int_t^T (\Psi(u^{(k)}(s))-\Psi(u(s)))\,ds
	\]
	in \eqref{eq:weakminus}, we get
	\begin{equation}
		\begin{aligned}
			&\int_0^T\int_M (u^{(k)}-u)(\Psi(u^{(k)})-\Psi(u))\,d\mu\,dt
			+\frac{1}{2}\mathcal{E}\left(\int_0^T (\Psi(u^{(k)}(s))-\Psi(u(s)))\,ds\right) \\
			&= \int_M (u_{0,k}-u_0)\int_0^T (\Psi(u^{(k)}(s))-\Psi(u(s)))\,ds\,d\mu.
		\end{aligned}
		\label{eq:stab2}
	\end{equation}
	From \eqref{eq:hatu-bound},
	\[
	\left\|\int_0^T (\Psi(u^{(k)}(s))-\Psi(u(s)))\,ds\right\|_{L^{1+1/m}}
	\le \int_0^T \bigl(\|u^{(k)}(s)\|_{L^{m+1}}^m+\|u(s)\|_{L^{m+1}}^m\bigr)\,ds
	\le 2TC_0^m.
	\]
	Together with \eqref{eq:u0conv}, this gives
	\begin{equation}
		\lim_{k\to\infty}
		\int_M (u_{0,k}-u_0)\int_0^T (\Psi(u^{(k)}(s))-\Psi(u(s)))\,ds\,d\mu = 0.
		\label{eq:m+1}
	\end{equation}
	Letting \(j\to\infty\) in \eqref{eq:stab2} with \(k=k_j\), and using \eqref{eq:Eliminf} and \eqref{eq:m+1}, we obtain
	\[
	\limsup_{j\to\infty}
	\int_0^T\int_M (u^{(k_j)}-u)(\Psi(u^{(k_j)})-\Psi(u))\,d\mu\,dt
	\le -\frac{1}{2}\mathcal{E}\left(\int_0^T (\tilde v(s)-\Psi(u(s)))\,ds\right).
	\]
	Comparing this with \eqref{eq:stab1}, we have
	\[
	\limsup_{j\to\infty}
	\int_0^T\int_M (u^{(k_j)}-u)(\Psi(u^{(k_j)})-\Psi(u))\,d\mu\,dt
	\le \int_0^T\int_M (\tilde u-u)(\tilde v-\Psi(u))\,d\mu\,dt.
	\]
	By \eqref{eq:hatu-weak} and \eqref{eq:hatv-weak},
	\[
	\limsup_{j\to\infty}
	\int_0^T\int_M u^{(k_j)}\Psi(u^{(k_j)})\,d\mu\,dt
	\le \int_0^T\int_M \tilde u\,\tilde v\,d\mu\,dt.
	\]
	By the Minty trick as in the proof of Theorem~\ref{thm:main}(i)(cf. \S~\ref{sub:unicom}), it follows that
	\[
	\tilde v(t,\cdot)=\Psi(\tilde u(t,\cdot)) \quad \mu\text{-a.e. for a.e. }t\in(0,T).
	\]
	Thus \(\tilde u\) is also a weak solution of \eqref{eq:cauchy} with initial datum \(u_0\). By the uniqueness part in Theorem~\ref{thm:main}(i), \(\tilde u=u\) a.e. on \((0,T)\times M\).
	
	It remains to show that the whole sequence \(\{u^{(k)}\}_{k\ge1}\) converges weakly to \(u\). Otherwise, there would exist a functional \(h\in L^2(0,T;L^{m+1})^*\), a constant \(\varepsilon_0>0\), and a subsequence \(\{u^{(\ell_j)}\}_{j\ge1}\subset\{u^{(k)}\}_{k\ge1}\) such that
	\begin{equation}
		\left\langle h, u^{(\ell_j)}-u\right\rangle_{L^2(0,T;L^{m+1})}\ge \varepsilon_0
		\label{eq:fzf}
	\end{equation}
	for every \(j\ge1\). But \(\{u^{(\ell_j)}\}_{j\ge1}\) is also bounded in \(L^2(0,T;L^{m+1})\), hence admits a weakly convergent subsequence; by the argument above, its weak limit must be \(u\), contradicting \eqref{eq:fzf}. Therefore \(u^{(k)}\rightharpoonup u\) in \(L^2(0,T;L^{m+1})\).
\end{proof}

\begin{proposition}\label{prop:stab-norm}
	We have
	\begin{equation}
		\lim_{k\to\infty}\|u^{(k)}\|_{L^2(0,T;L^{m+1})}=\|u\|_{L^2(0,T;L^{m+1})}.
		\label{eq:stab-norm}
	\end{equation}
\end{proposition}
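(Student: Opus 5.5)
The plan is to prove something slightly stronger than \eqref{eq:stab-norm}: that \(u^{(k)}\to u\) strongly in \(L^{m+1}((0,T)\times M)\). Strong convergence in \(L^2(0,T;L^{m+1})\) then follows from the uniform \(L^\infty\)-bounds \eqref{eq:hatu-bound}, and \eqref{eq:stab-norm} follows in turn. The starting point is the identity \eqref{eq:stab2}. Both terms on its left-hand side are nonnegative: the first by monotonicity of \(\Psi\), the second because \(\mathcal{E}\ge 0\). By \eqref{eq:m+1}, the right-hand side tends to \(0\) along the whole sequence. Hence
\[
\lim_{k\to\infty}\int_0^T\int_M (u^{(k)}-u)\bigl(\Psi(u^{(k)})-\Psi(u)\bigr)\,d\mu\,dt=0 .
\]

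Next I expand this integrand as
\[
|u^{(k)}|^{m+1}-u^{(k)}\Psi(u)-u\,\Psi(u^{(k)})+|u|^{m+1}.
\]
For the cross term \(\int\!\!\int u^{(k)}\Psi(u)\), note that \(\Psi(u)\in L^\infty(0,T;L^{1+1/m})\subset L^2(0,T;L^{1+1/m})\). So Proposition~\ref{prop:stab-weak} gives convergence to \(\int\!\!\int|u|^{m+1}\).

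For the other cross term \(\int\!\!\int u\,\Psi(u^{(k)})\), I need \(\Psi(u^{(k)})\rightharpoonup\Psi(u)\) weakly in \(L^2(0,T;L^{1+1/m})\) for the whole sequence. This comes out of the proof of Proposition~\ref{prop:stab-weak}. There, every subsequence has a further subsequence along which \(\Psi(u^{(k_j)})\rightharpoonup\tilde v\) in \(L^2(0,T;V)\), with \(\tilde v=\Psi(\tilde u)=\Psi(u)\). The same subsequence-of-subsequence argument upgrades this to the full sequence. Weak convergence in \(L^2(0,T;V)\) implies weak convergence in \(L^2(0,T;L^{1+1/m})\), since \(V\hookrightarrow L^{1+1/m}\) continuously. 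Also \(u\in L^2(0,T;L^{m+1})\), so this cross term likewise tends to \(\int\!\!\int|u|^{m+1}\). Combining the two, I get
\[
\|u^{(k)}\|_{L^{m+1}((0,T)\times M)}\to\|u\|_{L^{m+1}((0,T)\times M)}.
\]

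Weak convergence \(u^{(k)}\rightharpoonup u\) also holds in \(L^{m+1}((0,T)\times M)\): bounded space-time functions lie in \(L^2(0,T;L^{1+1/m})\), and I pass from these to general test functions by density together with boundedness. Since \(L^{m+1}\) is uniformly convex, weak convergence plus convergence of norms (Radon--Riesz) yields strong convergence in \(L^{m+1}((0,T)\times M)\). Now set \(f_k(t):=\|u^{(k)}(t)-u(t)\|_{L^{m+1}}\). Then \(f_k\to0\) in \(L^{m+1}(0,T)\), and \(0\le f_k\le 2C_0\) by \eqref{eq:hatu-bound}.
\begin{itemize}
\item If \(m\ge1\), H\"older on the finite interval \((0,T)\) gives \(\|f_k\|_{L^2(0,T)}\to0\).
\item If \(0<m<1\), the pointwise bound \(f_k^2\le(2C_0)^{1-m}f_k^{m+1}\) gives the same conclusion.
\end{itemize}
Thus \(u^{(k)}\to u\) in \(L^2(0,T;L^{m+1})\), and \eqref{eq:stab-norm} follows from the reverse triangle inequality.

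The main obstacle is the mismatch of norms. The monotonicity and energy identity control only the space-time quantity \(\int\!\!\int u\Psi(u)\), that is, the \(L^{m+1}(Q)\)-norm. The claimed statement is about the mixed norm of \(L^2(0,T;L^{m+1})\). The step I expect to need the most care is therefore the detour through strong \(L^{m+1}(Q)\)-convergence followed by the \(L^\infty\)-in-time bound. A second point requiring care is justifying weak convergence of the full sequence \(\Psi(u^{(k)})\), which I handle by re-running the subsequence argument of Proposition~\ref{prop:stab-weak}.
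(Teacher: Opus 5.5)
Your argument is correct, but it takes a different route from the paper.

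\textbf{The paper's route.} The paper never identifies the weak limit of $\Psi(u^{(k)})$, and it does not use Proposition~\ref{prop:stab-weak} for this statement. It applies H\"older's inequality to the two cross terms in $(u^{(k)}-u)(\Psi(u^{(k)})-\Psi(u))$. This bounds the space-time monotonicity integral from below by $\int_0^T (a_k-a)(a_k^m-a^m)\,dt\ge 0$, where $a_k(t)=\|u^{(k)}(t)\|_{L^{m+1}}$ and $a(t)=\|u(t)\|_{L^{m+1}}$. So the norm mismatch you point out is removed by passing to scalar functions of $t$. Strict positivity of $(s-t)(s^m-t^m)$ on $\{|s-t|\ge\varepsilon\}\cap[0,C_0]^2$ then gives $a_k\to a$ in measure, and the bound $a_k,a\le C_0$ gives $\int_0^T|a_k^2-a^2|\,dt\to 0$. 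This yields only convergence of norms. Strong convergence in Theorem~\ref{thm:main}(iii) comes afterwards, from Proposition~\ref{prop:stab-weak} together with uniform convexity of the Bochner space $L^2(0,T;L^{m+1})$.

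\textbf{Your route.} You work with the full space-time integrand instead. This costs you full-sequence weak convergence of $\Psi(u^{(k)})$. You obtain it by rerunning the Minty and uniqueness identification from the proof of Proposition~\ref{prop:stab-weak} along every subsequence, which is legitimate. In return, you use Radon--Riesz only in the ordinary Lebesgue space $L^{m+1}((0,T)\times M)$. You then pass to $L^2(0,T;L^{m+1})$ through the uniform $L^\infty$-in-time bound, and your split into $m\ge1$ and $0<m<1$ is correct. You end up proving Theorem~\ref{thm:main}(iii) directly, without the uniform convexity of the Bochner space.

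\textbf{Comparison.} The paper's proof is more self-contained for the norm statement alone. Yours is more direct for the strong convergence that is the real goal.

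\textbf{One detail to tighten.} You say bounded space-time functions lie in $L^2(0,T;L^{1+1/m})$. This fails in general when $m>1$ and $\mu(M)=\infty$: a bounded function supported on a finite-measure set $E\subset(0,T)\times M$ can have $t\mapsto\mu(E_t)$ integrable while $t\mapsto\mu(E_t)^{2m/(m+1)}$ is not. Use a dense class that really does sit in $L^2(0,T;L^{1+1/m})$. Two options are bounded functions supported in $(0,T)\times K$ with $\mu(K)<\infty$, or $L^{1+1/m}((0,T)\times M)\cap L^\infty(0,T;L^{1+1/m})$, which is dense by truncating in the slice norm. With either class, your density-plus-boundedness argument for weak convergence in $L^{m+1}((0,T)\times M)$ goes through.
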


\begin{proof}
	From \eqref{eq:stab2},
	\begin{equation}
		\int_0^T\int_M (u^{(k)}-u)(\Psi(u^{(k)})-\Psi(u))\,d\mu\,dt
		\le \int_M (u_{0,k}-u_0)\int_0^T (\Psi(u^{(k)}(s))-\Psi(u(s)))\,ds\,d\mu.
		\label{eq:stab3}
	\end{equation}
	For the left-hand side, by H\"older's inequality,
	\begin{align}
		&\int_0^T\int_M (u^{(k)}-u)(\Psi(u^{(k)})-\Psi(u))\,d\mu\,dt \notag \\
		&= \int_0^T\int_M \left(|u^{(k)}|^{m+1}+|u|^{m+1}-u^{(k)}\Psi(u)-u\Psi(u^{(k)})\right)\,d\mu\,dt \notag \\
		&\ge \int_0^T \left(\|u^{(k)}(t)\|_{L^{m+1}}^{m+1}+\|u(t)\|_{L^{m+1}}^{m+1}\right)dt \notag \\
		&\qquad -\int_0^T \left(\|u^{(k)}(t)\|_{L^{m+1}}\|u(t)\|_{L^{m+1}}^m+\|u(t)\|_{L^{m+1}}\|u^{(k)}(t)\|_{L^{m+1}}^m\right)dt \notag \\
		&= \int_0^T \left(\|u^{(k)}(t)\|_{L^{m+1}}-\|u(t)\|_{L^{m+1}}\right)
		\left(\|u^{(k)}(t)\|_{L^{m+1}}^m-\|u(t)\|_{L^{m+1}}^m\right)dt \ge 0.
		\label{eq:stab4}
	\end{align}
	For every $k\geq 1$, set
	\[
	a_k(t):=\|u^{(k)}(t)\|_{L^{m+1}},\qquad a(t):=\|u(t)\|_{L^{m+1}}.
	\]
	Then \eqref{eq:stab4} becomes
	\begin{equation}
		\int_0^T\int_M (u^{(k)}-u)(\Psi(u^{(k)})-\Psi(u))\,d\mu\,dt
		\ge \int_0^T (a_k(t)-a(t))(a_k(t)^m-a(t)^m)\,dt \ge 0.
		\label{eq:stab5}
	\end{equation}
	Letting \(k\to\infty\) in \eqref{eq:stab3} and using \eqref{eq:stab5} and \eqref{eq:m+1}, we obtain
	\begin{equation}
		\lim_{k\to\infty}
		\int_0^T (a_k(t)-a(t))\left(a_k(t)^m-a(t)^m\right)\,dt = 0.
		\label{eq:stab6}
	\end{equation}
	For any \(\varepsilon>0\), define
	\[
	A_k(\varepsilon):=\{t\in[0,T): |a_k(t)-a(t)|\ge\varepsilon\}.
	\]
	From \eqref{eq:hatu-bound}, it follows that
	\begin{equation}\label{eq:a-bound}
	\sup_{k\ge1}\esup_{t\in[0,T)} a_k(t)\le C_0,\qquad
	\esup_{t\in[0,T)} a(t)\le C_0.
	\end{equation}
	Note by basic calculus that, for any $varepsilon>0$, the minimum of function $F(s,t)=(s-t)(s^m-t^m)$ in $[0,C_0]\times[0,C_0]\setminus\{|s-t|\geq\varepsilon\}$ is a strictly positive real number depending only on $\varepsilon,m,C_0$. Denote by $\delta:=\delta(\varepsilon,m,C_0)$ this minimum. Then, 
	\[
	0\le \delta|A_k(\varepsilon)|
	\le \int_0^T (a_k(t)-a(t))(a_k(t)^m-a(t)^m)\,dt.
	\]
	Combining this inequality with \eqref{eq:stab6}, we obtain
	\begin{equation}
		\lim_{k\to\infty}|A_k(\varepsilon)|=0
		\label{eq:stab7}
	\end{equation}
	for every \(\varepsilon>0\), i.e. \(a_k\to a\) in measure.
	
	Therefore, for every \(\varepsilon>0\) and \(k\ge1\), it follows from \eqref{eq:hatu-bound} that
	\begin{equation}\label{eq:last5}
	\left|\|u^{(k)}\|_{L^2(0,T;L^{m+1})}^2-\|u\|_{L^2(0,T;L^{m+1})}^2\right|
	= \left|\int_0^T (a_k(t)^2-a(t)^2)\,dt\right|
	\le 2C_0\int_0^T |a_k(t)-a(t)|\,dt.
	\end{equation}
	Using the decomposition \([0,T)=A_k(\varepsilon)\cup([0,T)\setminus A_k(\varepsilon))\), we get
	\[
	\int_0^T |a_k(t)-a(t)|\,dt
	\le 2C_0|A_k(\varepsilon)| + T\varepsilon.
	\]
	Letting \(k\to\infty\) in this inequality, we have
	\[\limsup_{k\to\infty} \int_0^T |a_k(t)-a(t)|\,dt\leq T\varepsilon.\]
	Since \(\varepsilon\downarrow0\) is arbitrary, letting $\varepsilon\downarrow0^+$ and using \eqref{eq:last5}, we obtain
	\[\lim_{k\to\infty}\left|\|u^{(k)}\|_{L^2(0,T;L^{m+1})}^2-\|u\|_{L^2(0,T;L^{m+1})}^2\right|=0,\]
	showing \eqref{eq:stab-norm}.
\end{proof}

We now prove Theorem~\ref{thm:main}(iii).

\begin{proof}[Proof of Theorem~\ref{thm:main}(iii)]
	By Proposition~\ref{prop:stab-weak}, \(u^{(k)}\rightharpoonup u\) in \(L^2(0,T;L^{m+1})\). By Proposition~\ref{prop:stab-norm}, the norms converge. Since \(L^2(0,T;L^{m+1})\) is a uniformly convex Banach space, weak convergence together with convergence of norms implies strong convergence. Hence
	\[
	\lim_{k\to\infty}\|u^{(k)}-u\|_{L^2(0,T;L^{m+1})}=0,
	\]
	which proves Theorem~\ref{thm:main}(iii).
\end{proof}

\section{Applications to specific metric measure spaces}
\label{Sec:Outro}

In this section, we illustrate the main conclusions of Theorem~\ref{thm:main}—existence, uniqueness, and nonnegativity preservation—to concrete metric measure spaces equipped with Dirichlet forms, thereby obtaining well-posedness results for the porous medium equation in a variety of settings. The examples we consider range from classical Euclidean spaces and smooth Riemannian manifolds to non-smooth fractal spaces and general metric measure spaces. In the Euclidean and Riemannian cases, our results are compatible with known ones. In the fractal and general metric measure settings, the results appear to be new.

\subsection{Euclidean spaces}
\label{exmp:mani}

We first illustrate Theorem~\ref{thm:main} in the classical Euclidean setting.

\subsubsection{Signed PME and signed FDE on Euclidean spaces}

Let \((M,d)=(\mathbb{R}^N,|\cdot|)\) with \(N\ge2\), \(\mu=dx\) the Lebesgue measure on \(\mathbb{R}^N\), and let \(\mathcal{L}\) be the Laplace operator on \(\mathbb{R}^N\). Then equation \eqref{eq:1} reduces to the porous medium equation (\(m>1\)) or the fast diffusion equation (\(0<m<1\)).

Recall from Example~\ref{ex:dirichlet}(i) that \(\mathcal{L}\) is associated with the Dirichlet form
\[
\left(\int_{\mathbb{R}^N}\nabla u\cdot\nabla v\,dx,\; W^{1,2}(\mathbb{R}^N)\right).
\]
By \cite[Examples 1.5.3 \& 1.6.2]{FukushimaOshimaTakeda.2011.489}, the extended Dirichlet space satisfies
\[
W_e^{1,2}(\mathbb{R}^N)=
\begin{cases}
	\dot{W}^{1,2}(\mathbb{R}^N)/\mathbb{R}, & N\ge3,\\
	\dot{W}^{1,2}(\mathbb{R}^N), & N=2,
\end{cases}
\]
where
\[
\dot{W}^{1,2}(\mathbb{R}^N):=\{u\in L_{\mathrm{loc}}^2(\mathbb{R}^N):\nabla u\in L^2(\mathbb{R}^N)\}
\]
is the homogeneous Sobolev space.

Applying Theorem~\ref{thm:main}(i),(ii) yields the following consequence.

\begin{corollary}
	\label{cor:euclid}
	Set \(V:=L^{1+1/m}\cap \dot{W}^{1,2}\). For every \(u_0\in L^{m+1}(\mathbb{R}^N,dx)\), there exists a unique
	\[
	u\in L^\infty(0,T;L^{m+1}(\mathbb{R}^N,dx))\cap W^{1,2}(0,T;V^*)
	\]
	such that \(\Psi(u)\in L^2(0,T;V)\) and
	\[
	\int_0^T\int_{\mathbb{R}^N}-u\,\partial_t\varphi\,dx\,dt
	+\int_0^T\int_{\mathbb{R}^N} \nabla u^m\cdot\nabla\varphi\,dx\,dt
	= \int_{\mathbb{R}^N} u_0\varphi(0,\cdot)\,dx
	\]
	for every \(\varphi\in C_c^1([0,T);V)\). Moreover, if \(u_0\ge0\) a.e., then \(u(t,\cdot)\ge0\) a.e. for a.e. \(t\in(0,T)\).
\end{corollary}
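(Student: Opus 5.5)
This corollary is a direct specialization of Theorem~\ref{thm:main}(i),(ii), so the plan is to identify the abstract objects with concrete Euclidean ones and then read off each conclusion. I would take $(M,d,\mu)=(\mathbb{R}^N,|\cdot|,dx)$, which is locally compact and separable with Lebesgue measure a full-support Radon measure, and take the Dirichlet form $(\mathcal{E},\mathcal{F})=(\int\nabla u\cdot\nabla v\,dx,\,W^{1,2}(\mathbb{R}^N))$ of Example~\ref{ex:dirichlet}(i). Its associated Laplacian is $\Delta$, so \eqref{eq:cauchy} is exactly the Euclidean signed PME/FDE. By the cited description of the extended Dirichlet space, $\mathcal{F}_e$ coincides with $\dot W^{1,2}$ (modulo constants when $N\ge3$). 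Hence the abstract space $V=L^{1+1/m}\cap\mathcal{F}_e$ equals $L^{1+1/m}\cap\dot W^{1,2}$.

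The one point needing a sentence is the quotient by constants when $N\ge3$. An element of $L^{1+1/m}$ determines a genuine function, and a nonzero constant is not in $L^{1+1/m}(\mathbb{R}^N)$. So intersecting with $L^{1+1/m}$ selects a unique representative of each class, and the identification is canonical in both cases $N=2$ and $N\ge3$. I would also check that on this intersection $\mathcal{E}(f,g)=\int\nabla f\cdot\nabla g\,dx$. This holds because the extended form is the $\mathcal{E}$-limit along approximating sequences, and gradients converge in $L^2$ along such sequences.

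With these identifications, Theorem~\ref{thm:main}(i) gives existence and uniqueness of $u\in L^\infty(0,T;L^{m+1})\cap W^{1,2}(0,T;V^*)$ with $\Psi(u)\in L^2(0,T;V)$. It also gives the weak identity with $\mathcal{E}(\Psi(u),\varphi)=\int\nabla\Psi(u)\cdot\nabla\varphi\,dx$. The displayed term $\nabla u^m$ is to be read as $\nabla(|u|^{m-1}u)$ in the signed case, and it reduces to $\nabla u^m$ for nonnegative solutions. Nonnegativity preservation is the final assertion of Theorem~\ref{thm:main}(ii), applied with $\hat u_0=0$.

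I expect no real obstacle here. The only delicate step is the careful identification of $\mathcal{F}_e$, including the constants issue for $N\ge3$ and the fact that $\mathcal{E}$ on $\mathcal{F}_e$ is the Dirichlet integral. I would handle both by citing \cite[Examples 1.5.3 \& 1.6.2]{FukushimaOshimaTakeda.2011.489} together with Proposition~\ref{Fe}.
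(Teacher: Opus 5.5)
Your proposal is correct and follows the paper's route: the paper obtains the corollary simply by applying Theorem~\ref{thm:main}(i),(ii) to the form of Example~\ref{ex:dirichlet}(i), with $\mathcal{F}_e$ identified via \cite[Examples 1.5.3 \& 1.6.2]{FukushimaOshimaTakeda.2011.489}, and your remarks on reading $\nabla u^m$ as $\nabla\Psi(u)$ and on $\mathcal{E}$ being the Dirichlet integral on $\mathcal{F}_e$ make explicit what the paper leaves implicit. One small tightening for $N\ge3$: that no nonzero constant lies in $L^{1+1/m}$ only shows each class has at most one $L^{1+1/m}$-representative, whereas to get $V=\{u\in L^{1+1/m}:\nabla u\in L^2\}$ you need this representative to be the one in $\mathcal{F}_e\subset L^{2N/(N-2)}$, which follows because no nonzero constant lies in $L^{1+1/m}+L^{2N/(N-2)}$.
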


This is compatible with \cite[Definition 9.3 \& Theorem 9.25]{VazquezPMEBook} and slightly extends the class of admissible initial data.

\subsubsection{Signed fractional PME and signed fractional FDE on Euclidean spaces}

Let \((M,d,\mu)\) be as in the previous subsection, but now let \(\mathcal{L}=-(-\Delta)^{\beta/2}\) be the fractional Laplace operator on \(\mathbb{R}^N\) for some \(0<\beta<2\). Then equation \eqref{eq:1} reduces to the fractional porous medium equation (\(m>1\)) or the corresponding fast diffusion equation (\(0<m<1\)).

Recall from Example~\ref{ex:dirichlet}(ii) that \(-(-\Delta)^{\beta/2}\) is associated with the Dirichlet form \((\mathcal{E}_\beta,\mathcal{F}_\beta)\), where \(\mathcal{F}_\beta=W^{\beta/2,2}(\mathbb{R}^N)\). Since \(\beta<2\le N\), by \cite[Examples 1.5.2, (1.5.17)--(1.5.19)]{FukushimaOshimaTakeda.2011.489}, the extended Dirichlet space satisfies
\[
W_e^{\beta/2,2}(\mathbb{R}^N)=\dot{L}_{\beta/2,2}(\mathbb{R}^N),
\]
the Riesz potential space of index \(\beta/2\).

Applying Theorem~\ref{thm:main}(i),(ii) yields the following consequence.

\begin{corollary}
	\label{cor:fractional}
	Set \(V:=L^{1+1/m}(\mathbb{R}^N,dx)\cap \dot{L}_{\beta/2,2}\). For every \(u_0\in L^{m+1}(\mathbb{R}^N)\), there exists a unique
	\[
	u\in L^\infty(0,T;L^{m+1}(\mathbb{R}^N,dx))\cap W^{1,2}(0,T;V^*)
	\]
	such that \(\Psi(u)\in L^2(0,T;V)\) and
	\[
	\int_0^T\int_{\mathbb{R}^N} -u\,\partial_t\varphi\,dx\,dt
	+\int_0^T\int_{\mathbb{R}^N} |\xi|^\beta \widehat{u}(\xi)\overline{\widehat{\varphi}(\xi)}\,d\xi\,dt
	= \int_{\mathbb{R}^N} u_0\varphi(0,\cdot)\,dx
	\]
	for every \(\varphi\in C_c^1([0,T);V)\). Moreover, if \(u_0\ge0\) a.e., then \(u(t,\cdot)\ge0\) a.e. for a.e. \(t\in(0,T)\).
\end{corollary}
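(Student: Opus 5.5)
The plan is to read off Corollary~\ref{cor:fractional} as a specialization of Theorem~\ref{thm:main}(i),(ii), so the work is in matching the abstract objects with the concrete ones. First, take the metric measure space $(\mathbb{R}^N,|\cdot|,dx)$. It is locally compact and separable, and Lebesgue measure is a Radon measure of full support, so the standing assumptions of Section~\ref{sec:Intro} hold. Next, take the Dirichlet form $(\mathcal{E}_\beta,\mathcal{F}_\beta)$ of Example~\ref{ex:dirichlet}(ii); its associated operator in the sense of \eqref{eq:Laplacian} is $\mathcal{L}=-(-\Delta)^{\beta/2}$. Theorem~\ref{thm:main} requires nothing further of the form (neither regularity nor locality), so it applies directly with $V=L^{1+1/m}\cap(\mathcal{F}_\beta)_e$.

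The second step is to identify $(\mathcal{F}_\beta)_e$ with the Riesz potential space $\dot{L}_{\beta/2,2}(\mathbb{R}^N)$. Here I would invoke \cite[Example 1.5.2]{FukushimaOshimaTakeda.2011.489}, which gives exactly this identification in the transient range $\beta<N$; this holds because $\beta<2\le N$. I would also check that the extended energy on this space is still given by the Fourier expression $\int|\xi|^\beta|\hat u|^2\,d\xi$, where $\hat u$ is understood through the Riesz potential representation, so that its polarization yields the bilinear term in the stated weak formulation. This identification is where I expect the only real subtlety. Elements of $\dot L_{\beta/2,2}$ need not lie in $L^2$, so their Fourier transforms must be understood distributionally or via the potential representation. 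I would rely on (1.5.17)--(1.5.19) of the cited reference, which state that $\mathcal{E}_\beta$ extends to $\dot L_{\beta/2,2}$ with this Fourier formula.

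Third, I would substitute everything into Definition~\ref{def:weaksol}. The weak formulation there contains $\mathcal{E}(\Psi(u)(t),\varphi(t))$, and by the identification above this term equals $\int|\xi|^\beta\widehat{\Psi(u)}\,\overline{\hat\varphi}\,d\xi$. I would therefore read the displayed formula in Corollary~\ref{cor:fractional}, which writes $\hat u$, as involving $\widehat{\Psi(u)}$. That matches Definition~\ref{def:weaksol} and is clearly what is intended, and I would write it that way in the proof.

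With these pieces in place, the remaining conclusions follow immediately. Existence and uniqueness of $u$ in the stated class come from Theorem~\ref{thm:main}(i). Preservation of nonnegativity comes from the final sentence of Theorem~\ref{thm:main}(ii).
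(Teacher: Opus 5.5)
Your proposal is correct and takes the same route as the paper. The paper likewise identifies the extended space of $(\mathcal{E}_\beta,\mathcal{F}_\beta)$ with the Riesz potential space $\dot{L}_{\beta/2,2}(\mathbb{R}^N)$ via \cite[Example 1.5.2]{FukushimaOshimaTakeda.2011.489}, using $\beta<2\le N$, and then applies Theorem~\ref{thm:main}(i),(ii). Your reading of $\widehat{u}$ in the displayed identity as $\widehat{\Psi(u)}$ is the right one, since that is exactly the term Definition~\ref{def:weaksol} produces.
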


This is compatible with \cite[Definition 3.1]{PabloQRVazquez2012CPAM} and slightly extends the class of admissible initial data.

\subsection{Riemannian manifolds}

Let \((M,g)\) be a geodesically complete Riemannian manifold with volume measure \(\mu\) and Laplace--Beltrami operator \(\Delta_g\). As recalled in Example~\ref{ex:dirichlet}(iii), the quadratic form
\[
\mathcal{E}(u,v)=\int_M (\nabla u,\nabla v)_g\,d\mu
\]
is closable in \(L^2(M,\mu)\), and its closure is a regular local Dirichlet form on \(L^2(M,\mu)\) with domain \(W^{1,2}(M)\); its associated Laplacian is the associated self-adjoint operator \(\Delta_g\). Applying Theorem~\ref{thm:main}(i),(ii) yields the same well-posedness result as in the Euclidean case (cf. Corollary~\ref{cor:euclid}).

\subsection{Sierpi\'nski gasket and infinite Sierpi\'nski gasket}

\label{exmp:frac}

We now apply Theorem~\ref{thm:main} to the Sierpi\'nski gasket, considering both the compact case (with Cauchy--Dirichlet problem) and the non-compact blow-up (with Cauchy problem).

\subsubsection{The Cauchy--Dirichlet problem on the Sierpi\'nski gasket}

Let $\{q_1,q_2,q_3\}$ be the vertices of the unit equilateral triangle in $\mathbb{R}^2$, say
\[
\qquad q_1=(0,0),\qquad q_2=(1,0),\qquad q_3=\left(\frac12,\frac{\sqrt3}{2}\right).
\]
For $i=1,2,3$, define the contraction maps
\[
F_i(x):=\frac{x+q_i}{2},\qquad x\in\mathbb{R}^2.
\]
The Sierpi\'nski gasket $K$ is the unique non-empty compact set satisfying
\[
K=\bigcup_{i=1}^3 F_i(K),
\]
as shown in Figure \ref{fig:sierpinski}.
\begin{figure}[htbp]
	\centering
	\includegraphics[width=0.4\textwidth]{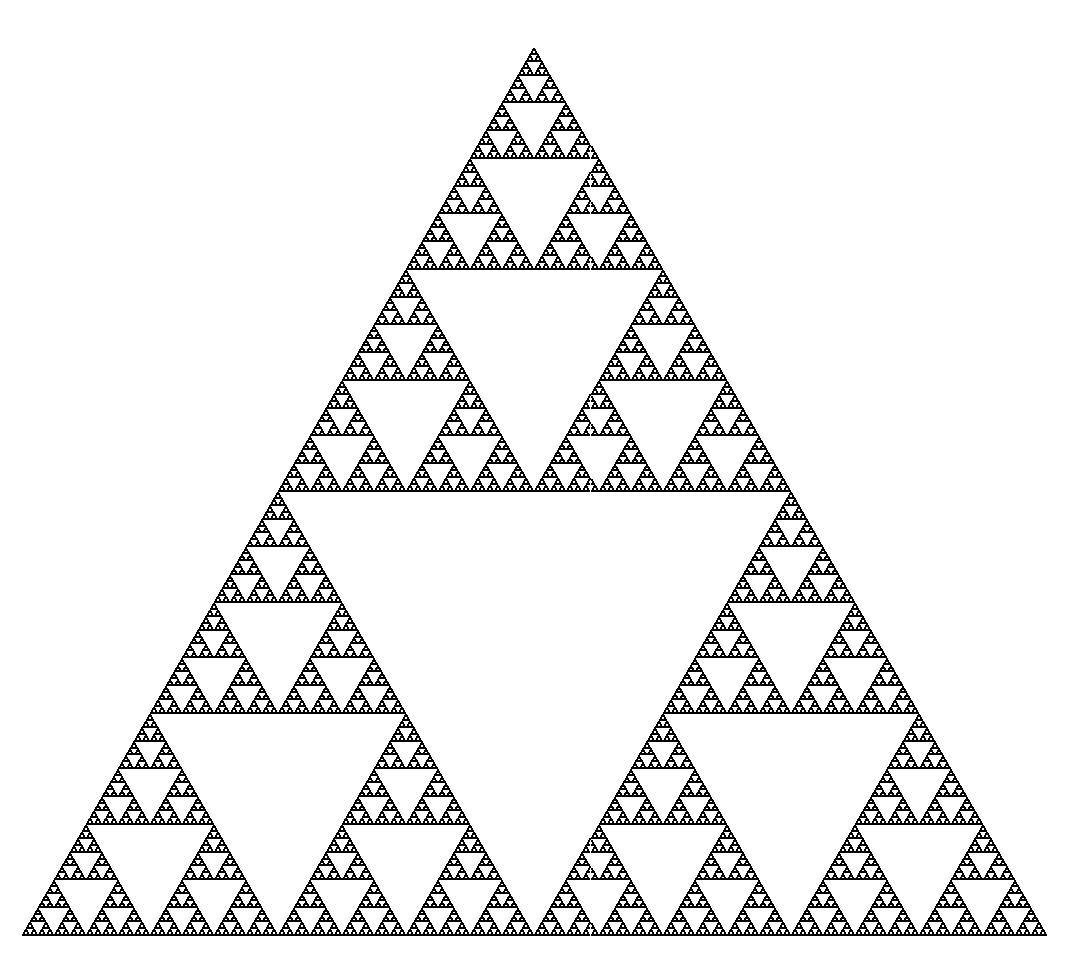}
	\caption{The Sierpi\'nski gasket \(K\) .}
	\label{fig:sierpinski}
\end{figure}

Let $\mu$ be any fixed self-similar measure on $K$, that is, a Borel probability measure with full support satisfying
\[
\mu(A)=\sum_{i=1}^3 \mu_i\,\mu(F_i^{-1}(A))
\]
for some weights $\mu_1,\mu_2,\mu_3\in(0,1)$ with $\mu_1+\mu_2+\mu_3=1$. Then $(K,|\cdot|,\mu)$ is a compact metric measure space in the sense of Section~\ref{sec:Intro}.

For each $n\ge0$, define the boundary set $V_0:=\{q_1,q_2,q_3\}$ and the $n$-th level vertex set
\[
V_n:=\bigcup_{\tau\in I^n} F_\tau(V_0),
\]
where $I:=\{1,2,3\}$, $I^0:=\{\varnothing\}$, and $F_\tau:=F_{\tau_1}\circ\cdots\circ F_{\tau_n}$ for $\tau=\tau_1\cdots\tau_n\in I^n$. The sets $\{V_n\}_{n\ge0}$ form an increasing sequence of finite subsets of $K$, and $\overline{\bigcup_{n\ge0}V_n}=K$.

We now recall the construction of the Dirichlet form on $K$ due to Kigami \cite{Kigami.1993.TAMS721}. On each finite set $V_n$, define the discrete energy
\[
\mathcal{E}_n(u,u):=\left(\frac53\right)^n
\sum_{\tau\in I^n}\sum_{i<j}\bigl(u(F_\tau(q_i))-u(F_\tau(q_j))\bigr)^2,
\qquad u\in\ell(V_n).
\]
The renormalization factor $5/3$ ensures the compatibility condition $\mathcal{E}_{n+1}|_{V_n}=\mathcal{E}_n$ after harmonic extension. For any $u\in C(K)$, the sequence $\{\mathcal{E}_n(u|_{V_n},u|_{V_n})\}_{n\ge1}$ is non-decreasing in $n$. Hence, define
\[
\mathcal{F}:=\left\{u\in C(K):\sup_{n\ge0}\mathcal{E}_n\left(u|_{V_n},u|_{V_n}\right)<\infty\right\},
\]
and
\[
\mathcal{E}(u,u):=\lim_{n\to\infty}\mathcal{E}_n(u|_{V_n},u|_{V_n}),\qquad u\in\mathcal{F}.
\]
The pair $(\mathcal{E},\mathcal{F})$ is a strongly local, regular, self-similar Dirichlet form on $L^2(K,\mu)$ (see \cite[Section 9]{Kigami.2012.MAMS132}), and satisfies the following Sobolev-type inequality (see \cite{FukushimaShima.1992.PA1} or \cite[Lemma 2.4]{FalconerHu.1999.JMAA552}): there exists a constant $C>0$ such that, for any $u\in\mathcal{F}$ and any $x,y\in K$,
\begin{equation} \label{eq:soboinq1}
	|u(x)-u(y)|^2 \le C|x-y|^\gamma \mathcal{E}(u),
\end{equation}
where $\gamma:=\log_2(5/3)>0$.

To define the Cauchy--Dirichlet problem with zero boundary values, we introduce the space $\mathcal{F}_0$ as the closure of $\mathcal{F}\cap C_c(K\setminus V_0)$ in $\mathcal{F}$ with respect to the $\mathcal{E}_1$-norm. Then $(\mathcal{E},\mathcal{F}_0)$ is again a regular Dirichlet form on $L^2(K,\mu)$, and its associated Laplacian will be denoted by $\mathcal{L}_{\mu,D}$. By \eqref{eq:soboinq1}, every function in $\mathcal{F}_0$ vanishes on $V_0$, so the zero boundary condition is encoded in the domain.

We consider the following Cauchy--Dirichlet problem:
\begin{equation} \label{eq:CD-problem}
	\begin{cases}
		\partial_t u = \mathcal{L}_{\mu,D}\left(|u|^{m-1}u\right), & (t,x)\in(0,T)\times K,\\
		u(0,x)=u_0(x), & x\in K,
	\end{cases}
\end{equation}
where the boundary condition is understood as $u(t,\cdot)\in\mathcal{F}_0$ for a.e. $t\in(0,T)$, i.e., the functions $u(t,\cdot)$ vanish on $V_0$.

The following observation is crucial for applying Theorem~\ref{thm:main}.

\begin{proposition}
	For the Dirichlet form $(\mathcal{E},\mathcal{F}_0)$ on the Sierpi\'nski gasket,
	\[
	\mathcal{F}_{0,e}=\mathcal{F}_0,
	\]
	where $\mathcal{F}_{0,e}$ denotes the extended Dirichlet space of $(\mathcal{E},\mathcal{F}_0)$.
\end{proposition}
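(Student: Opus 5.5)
The inclusion $\mathcal{F}_0\subset\mathcal{F}_{0,e}$ is trivial: a constant sequence is an approximating sequence. For the reverse inclusion, by \eqref{eq:FeL2} applied to $(\mathcal{E},\mathcal{F}_0)$ we have $\mathcal{F}_0=\mathcal{F}_{0,e}\cap L^2(K,\mu)$. It therefore suffices to show that every $u\in\mathcal{F}_{0,e}$ lies in $L^2(K,\mu)$; in fact I plan to show $u\in L^\infty$, and since $\mu$ is a probability measure this gives $u\in L^2$. The key tool is the Sobolev-type inequality \eqref{eq:soboinq1} combined with the vanishing of functions of $\mathcal{F}_0$ on $V_0$.

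Concretely, for $f\in\mathcal{F}_0$ take $y=q_1\in V_0$ in \eqref{eq:soboinq1}. Since $f(q_1)=0$ and $\operatorname{diam}K=1$, this gives
\[
\sup_{x\in K}|f(x)|^2\le C\,\mathcal{E}(f)\qquad\forall f\in\mathcal{F}_0 .
\]
This is a Poincaré/Sobolev inequality which says that the energy seminorm controls the sup norm on $\mathcal{F}_0$. The vanishing on $V_0$ follows from continuity: $\mathcal{E}_1$-convergence forces uniform convergence by \eqref{eq:soboinq1} together with the $L^2$ control, so the boundary values of $\mathcal{F}\cap C_c(K\setminus V_0)$ pass to the closure $\mathcal{F}_0$.

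Now let $u\in\mathcal{F}_{0,e}$ and let $\{u_k\}\subset\mathcal{F}_0$ be an approximating sequence, so that $u_k$ is $\mathcal{E}$-Cauchy and $u_k\to u$ $\mu$-a.e. Applying the inequality above to $u_k-u_l\in\mathcal{F}_0$ shows that $\{u_k\}$ is Cauchy in the uniform norm. Hence $u_k$ converges uniformly to some continuous $\tilde u$, and $\tilde u=u$ $\mu$-a.e. In particular $u\in L^\infty\subset L^2(K,\mu)$, which yields $u\in\mathcal{F}_0$ by \eqref{eq:FeL2}.

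One could alternatively finish directly: $u_k\to u$ in $L^2$ and $\mathcal{E}(u_k-u_l)\to0$, so $u_k$ is $\mathcal{E}_1$-Cauchy, and completeness of $\mathcal{F}_0$ gives a limit that coincides with $u$. I expect the only delicate point to be justifying that every element of $\mathcal{F}_0$ vanishes at $V_0$ and that \eqref{eq:soboinq1} holds for continuous representatives. Everything else is a routine Cauchy-sequence argument.
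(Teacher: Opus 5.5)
Your proof is correct and takes essentially the same route as the paper. Both anchor \eqref{eq:soboinq1} at $q_1\in V_0$ to get a Poincaré-type bound on $\mathcal{F}_0$, deduce that every element of $\mathcal{F}_{0,e}$ lies in $L^2(K,\mu)$, and conclude via $\mathcal{F}_{0,e}\cap L^2=\mathcal{F}_0$. The difference is minor: the paper proves the $L^2$ version $\|u\|_{L^2}^2\le C\,\mathcal{E}(u)$ on the approximants $\phi_n\in\mathcal{F}\cap C_c(K\setminus V_0)$, passes to the $\mathcal{E}_1$-limit and then applies Fatou. That sidesteps the pointwise vanishing on $V_0$ for all of $\mathcal{F}_0$ that your sup-norm version needs, though your uniform-convergence justification of that fact is sound.
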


\begin{proof}
	It suffices to prove $\mathcal{F}_{0,e}\subset\mathcal{F}_0$.
	
	For any $u\in\mathcal{F}_0$, by definition there exists a sequence $\{\phi_n\}\subset\mathcal{F}\cap C_c(K\setminus V_0)$ converging to $u$ in the $\mathcal{E}_1$-norm. Since each $\phi_n$ vanishes on $V_0$, applying \eqref{eq:soboinq1} to $\phi_n$ with $y=q_1\in V_0$ yields
	\[
	|\phi_n(x)|^2 \le C|x-q_1|^\gamma \mathcal{E}(\phi_n) \le C\,\mathcal{E}(\phi_n),
	\]
	where the last inequality uses $\operatorname{diam}(K)=1$. Integrating over $K$ gives
	\[
	\|\phi_n\|_{L^2(K,\mu)}^2 \le C\,\mathcal{E}(\phi_n).
	\]
	Passing to the limit as $n\to\infty$, we obtain
	\begin{equation} \label{eq:soboinq2}
		\|u\|_{L^2(K,\mu)}^2 \le C\,\mathcal{E}(u),\qquad u\in\mathcal{F}_0.
	\end{equation}
	
	Now fix $v\in\mathcal{F}_{0,e}$ and let $\{v_n\}\subset\mathcal{F}_0$ be an approximating sequence for $v$ in the extended sense: $v_n\to v$ $\mu$-a.e. and $\mathcal{E}(v_n-v)\to0$. Applying \eqref{eq:soboinq2} to $v_n$ and passing to the limit via Fatou's lemma yields
	\[
	\|v\|_{L^2(K,\mu)}^2
	\le \liminf_{n\to\infty}\|v_n\|_{L^2(K,\mu)}^2
	\le C\lim_{n\to\infty}\mathcal{E}(v_n)
	= C\,\mathcal{E}(v),
	\]
	showing that $v\in\mathcal{F}_0$. Thus $\mathcal{F}_{0,e}\subset\mathcal{F}_0$. The reverse inclusion is immediate.
\end{proof}

With the identification $\mathcal{F}_{0,e}=\mathcal{F}_0$, the Cauchy--Dirichlet problem \eqref{eq:CD-problem} fits exactly into the framework of Theorem~\ref{thm:main} with $V:=\mathcal{F}_0$. Note that $\mathcal{F}_0\subset C(K)$, $K$ is compact and $\mu(V_0)=0$, so $\mathcal{F}_0\subset L^{1+1/m}(K,\mu)=L^{1+1/m}(K\setminus V_0,\mu)$. Therefore, we obtain the following.

\begin{corollary}
	\label{cor:fra}
	For every $T>0$ and every $u_0\in L^{m+1}(K,\mu)$, there exists a unique
	\[
	u\in L^\infty(0,T;L^{m+1}(K,\mu))\cap W^{1,2}(0,T;\mathcal{F}_0^*)
	\]
	such that
	\[
	\Psi(u):=|u|^{m-1}u\in L^2(0,T;\mathcal{F}_0),
	\]
	and $u$ satisfies
	\[
	-\int_0^T\int_K u\,\partial_t\varphi\,d\mu\,dt
	+\int_0^T \mathcal{E}\bigl(\Psi(u)(t),\varphi(t)\bigr)\,dt
	= \int_K u_0\,\varphi(0,\cdot)\,d\mu
	\]
	for every $\varphi\in C_c^1([0,T);\mathcal{F}_0)$. Moreover, if $u_0\ge0$ $\mu$-a.e., then $u(t,\cdot)\ge0$ $\mu$-a.e. for a.e. $t\in(0,T)$.
\end{corollary}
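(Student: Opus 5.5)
The statement is Corollary~\ref{cor:fra}, and it follows by applying Theorem~\ref{thm:main}(i),(ii) to the Dirichlet form $(\mathcal{E},\mathcal{F}_0)$ on $L^2(K,\mu)$. The only work is to check that the abstract objects of Definition~\ref{def:weaksol} coincide with those in the corollary. Concretely, this means verifying three things: the hypotheses of Theorem~\ref{thm:main} hold, the space $V=V^{1+1/m}$ built from $(\mathcal{E},\mathcal{F}_0)$ equals $\mathcal{F}_0$ as a set with an equivalent norm, and the weak formulations agree.

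\textbf{Hypotheses.} $(K,|\cdot|,\mu)$ is compact, hence locally compact and separable. The self-similar measure $\mu$ is a finite Borel measure with full support, hence Radon. $(\mathcal{E},\mathcal{F}_0)$ is a Dirichlet form on $L^2(K,\mu)$ with associated Laplacian $\mathcal{L}_{\mu,D}$. So Theorem~\ref{thm:main} applies and gives a unique weak solution in the sense of Definition~\ref{def:weaksol}, with $V=L^{1+1/m}\cap\mathcal{F}_{0,e}$, together with nonnegativity preservation.

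\textbf{Identification of $V$.} By the preceding proposition, $\mathcal{F}_{0,e}=\mathcal{F}_0$. Every $u\in\mathcal{F}_0$ is continuous on the compact set $K$, so $u\in L^\infty\subset L^{1+1/m}$, and therefore $V=\mathcal{F}_0$ as sets. For the norms, I would combine the sup-bound $\|u\|_{L^\infty}^2\le C\mathcal{E}(u)$ from the proof of \eqref{eq:soboinq2} with $\mu(K)=1$. This gives $\|u\|_{L^{1+1/m}}\le C\sqrt{\mathcal{E}(u)}$ and $\|u\|_{L^2}\le C\sqrt{\mathcal{E}(u)}$. Hence $\|\cdot\|_V$, $\sqrt{\mathcal{E}}$ and $\|\cdot\|_{\mathcal{E}_1}$ are mutually equivalent on $\mathcal{F}_0$.

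\textbf{Consequences.} Because the norms are equivalent, $V^*=\mathcal{F}_0^*$ with equivalent norms. It follows that $L^2(0,T;V)=L^2(0,T;\mathcal{F}_0)$, $W^{1,2}(0,T;V^*)=W^{1,2}(0,T;\mathcal{F}_0^*)$ and $C_c^1([0,T);V)=C_c^1([0,T);\mathcal{F}_0)$. The weak formulation in Definition~\ref{def:weaksol} is then literally the one stated in the corollary. The nonnegativity claim is Theorem~\ref{thm:main}(ii) with $\hat u_0=0$.

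\textbf{Main obstacle.} The only delicate point is the norm equivalence, specifically deriving the uniform bound for all of $\mathcal{F}_0$ and not merely the $L^2$ bound. I would argue as in the preceding proof. For $\phi_n\in\mathcal{F}\cap C_c(K\setminus V_0)$, \eqref{eq:soboinq1} with $y=q_1$ gives $\sup|\phi_n|^2\le C\mathcal{E}(\phi_n)$. Passing to $\mathcal{E}_1$-limits, a subsequence converges $\mu$-a.e., so $\|u\|_{L^\infty}^2\le C\mathcal{E}(u)$ for all $u\in\mathcal{F}_0$. This is all that is needed.
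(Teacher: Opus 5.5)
Your proposal is correct and follows the paper's route. Both arguments identify $V=L^{1+1/m}\cap\mathcal{F}_{0,e}$ with $\mathcal{F}_0$ using the preceding proposition together with $\mathcal{F}_0\subset C(K)\subset L^{1+1/m}(K,\mu)$, and then apply Theorem~\ref{thm:main}(i),(ii). You also verify explicitly, via the uniform bound $\|u\|_{L^\infty}^2\le C\mathcal{E}(u)$, that $\|\cdot\|_V$, $\sqrt{\mathcal{E}}$ and $\|\cdot\|_{\mathcal{E}_1}$ are equivalent on $\mathcal{F}_0$. This fills in a point the paper leaves implicit: the identification of $V^*$, the Bochner spaces and the test-function class with their $\mathcal{F}_0$-counterparts. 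The uniform bound is genuinely needed when $m<1$, since then $1+1/m>2$ and the $L^2$ bound alone would not suffice.
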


\begin{remark}
	The proof of Corollary~\ref{cor:fra} relies essentially only on two structural properties of the Dirichlet form $(\mathcal{E},\mathcal{F})$: the embedding $\mathcal{F}\hookrightarrow C(K)$ and the Sobolev-type inequality
	\[
	|u(x)-u(y)|^2 \le C|x-y|^\gamma \mathcal{E}(u),\qquad u\in\mathcal{F},
	\]
	for some constants $C>0$ and $\gamma>0$ depending on the fractal. These properties hold for a broad class of fractals, including all nested fractals \cite{Lindstrom.1990.MAMS128} and the Sierpi\'nski carpet \cite{KusuokaYin.1992.PTRF169}. Consequently, the same well-posedness result as in Corollary~\ref{cor:fra} holds for the Cauchy--Dirichlet problem on these spaces, with $\mathcal{F}_0$ replaced by the corresponding Dirichlet subspace (and $V_0$ understood as the natural boundary of the fractal; see, e.g., \cite{CaoQiu2022JFA}).
\end{remark}

\subsubsection{The Cauchy problem on the infinite Sierpi\'nski gasket}

We now consider the unbounded version of the Sierpi\'nski gasket, obtained by a blow-up construction (see, e.g., \cite{Sabot2000JFA} or \cite{Teplyaev1998JFA}). Let the contraction maps $F_1,F_2,F_3$ and the compact Sierpi\'nski gasket $K$ be as above, with the standard Dirichlet form $(\mathcal{E},\mathcal{F})$ on $L^2(K,\nu)$, where $\nu$ is the self-similar measure with equal weights $1/3,1/3,1/3$.

Fix an infinite backward sequence $S:=\{i_{-n}\}_{n\ge1}\subset I$. For each $n\ge1$, define
\[
F_{-n,S}:=F_{i_{-1}}^{-1}\circ F_{i_{-2}}^{-1}\circ\cdots\circ F_{i_{-n}}^{-1},
\qquad K_{-n}:=F_{-n,S}(K),
\]
as illustrated in Figure~\ref{fig:unbounded_sg}.
\begin{figure}[!htbp]
	\centering
	\includegraphics[width=0.4\textwidth]{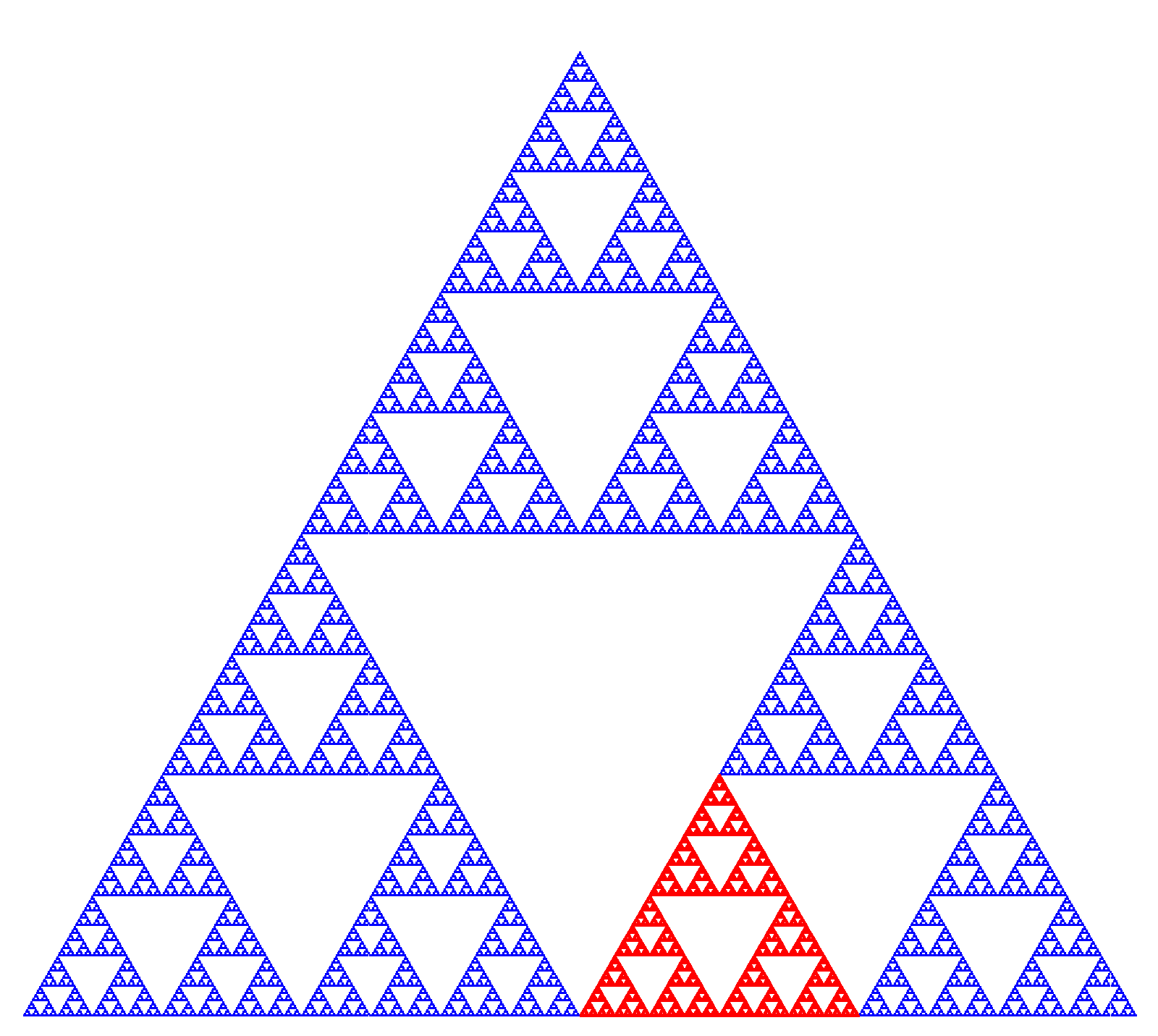}
	\caption{Example of the two-level blow-up $K_{-2}$ for the infinite Sierpiński gasket with reverse sequence $(i_{-1},i_{-2})=(2,1)$. The red subset indicates the original Sierpiński gasket $K$, which is contained in $K_{-2}$.}
	\label{fig:unbounded_sg}
\end{figure}

Since each $K_{-n}$ is a compact subset, the sets $\{K_{-n}\}_{n\ge1}$ form an increasing sequence and provide a compact exhaustion of the unbounded set
\[
K_{-\infty}:=\bigcup_{n\ge1}K_{-n}.
\]

The space $K_{-\infty}$ is non-compact. To equip it with a measure, for each $n\ge1$, define $\nu_{-n}$ on $K_{-n}$ by
\[
\int_{K_{-n}} f\,d\nu_{-n}
= \sum_{\tau\in I^n}\int_{F_{-n,S}\circ F_\tau(K)} f\,d\nu
= 3^n\int_K f\circ F_{-n,S}\,d\nu,
\qquad f\in C(K_{-n}).
\]
The measures $\nu_{-n}$ are compatible with the inclusions $K_{-n}\subset K_{-(n+1)}$, and hence extend to a Radon measure $\nu_{-\infty}$ on $K_{-\infty}$ with $\nu_{-\infty}(K_{-\infty})=\infty$.

The Dirichlet form on $K_{-\infty}$ is obtained by scaling the compact one. For each $n\ge1$, define
\[
\mathcal{F}_{-n}:=\{g\circ F_{-n,S}:g\in\mathcal{F}\},
\qquad
\mathcal{E}_{-n}(u,u):=\left(\frac35\right)^n \mathcal{E}(u\circ F_{-n,S},u\circ F_{-n,S}),\quad u\in\mathcal{F}_{-n}.
\]
Let
\[
\mathcal{F}_{-\infty}:=\left\{
u\in L^2(K_{-\infty},\nu_{-\infty}):
u|_{K_{-n}}\in\mathcal{F}_{-n}\ \forall n\ge1,\
\sup_{n\ge1}\mathcal{E}_{-n}\left(u|_{K_{-n}},u|_{K_{-n}}\right)<\infty
\right\},
\]
and for $u\in\mathcal{F}_{-\infty}$, define
\[
\mathcal{E}_{-\infty}(u,u):=\lim_{n\to\infty}\mathcal{E}_{-n}\left(u|_{K_{-n}},u|_{K_{-n}}\right).
\]
By \cite[Theorem 6.4]{Fukushima.1992.151}, the pair $(\mathcal{E}_{-\infty},\mathcal{F}_{-\infty})$ is a local, regular Dirichlet form on $L^2(K_{-\infty},\nu_{-\infty})$; its associated Laplacian is denoted by $\mathcal{L}_{-\infty}$.

Applying Theorem~\ref{thm:main}(i),(ii) with $V:=L^{1+1/m}(K_{-\infty},\nu_{-\infty})\cap\mathcal{F}_{-\infty,e}$ yields the following.

\begin{corollary}
	Set $V:=L^{1+1/m}(K_{-\infty},\nu_{-\infty})\cap\mathcal{F}_{-\infty,e}$. For every $T>0$ and every $u_0\in L^{m+1}(K_{-\infty},\nu_{-\infty})$, there exists a unique
	\[
	u\in L^\infty(0,T;L^{m+1}(K_{-\infty},\nu_{-\infty}))\cap W^{1,2}(0,T;V^*)
	\]
	such that
	\[
	\Psi(u):=|u|^{m-1}u\in L^2(0,T;V),
	\]
	and $u$ satisfies the weak formulation
	\[
	-\int_0^T\int_{K_{-\infty}} u\,\partial_t\varphi\,d\nu_{-\infty}\,dt
	+\int_0^T \mathcal{E}_{-\infty}\bigl(\Psi(u)(t),\varphi(t)\bigr)\,dt
	= \int_{K_{-\infty}} u_0\,\varphi(0,\cdot)\,d\nu_{-\infty}
	\]
	for every $\varphi\in C_c^1([0,T);V)$. Moreover, if $u_0\ge0$ $\nu_{-\infty}$-a.e., then $u(t,\cdot)\ge0$ $\nu_{-\infty}$-a.e. for a.e. $t\in(0,T)$.
\end{corollary}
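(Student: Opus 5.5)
This corollary is a direct specialization of Theorem~\ref{thm:main}(i),(ii), so my plan is to check that the infinite gasket $(K_{-\infty},|\cdot|,\nu_{-\infty})$ with $(\mathcal{E}_{-\infty},\mathcal{F}_{-\infty})$ meets the standing hypotheses of Section~\ref{sec:Intro}. I will then read off the conclusions with $V=V^{1+1/m}=L^{1+1/m}(K_{-\infty},\nu_{-\infty})\cap\mathcal{F}_{-\infty,e}$.

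\textbf{Step 1: the standing hypotheses.} I need local compactness, separability and a full-support Radon measure.

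Separability is immediate, since $K_{-\infty}\subset\mathbb{R}^2$.

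For local compactness, I will use that each $K_{-n}$ is compact and that $\{K_{-n}\}$ exhausts $K_{-\infty}$. The subtle point is that a point of $K_{-n}$ should have a neighbourhood in $K_{-\infty}$ contained in some $K_{-N}$. I expect this to follow from the nested structure: $K_{-n}$ is a cell of $K_{-(n+1)}$, and it meets the rest of $K_{-(n+1)}$ only at finitely many junction points. Each junction point lies in the interior of $K_{-(n+2)}$ relative to $K_{-\infty}$, unless the sequence $S$ is eventually constant. In that exceptional case the junction point is a boundary vertex, and $K_{-\infty}$ is a closed subset of a sector of $\mathbb{R}^2$. In either case $K_{-\infty}$ is a locally closed subset of $\mathbb{R}^2$, hence locally compact. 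This topological verification is the step I expect to need the most care, and I will first try to handle it by treating the eventually constant sequences separately.

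For the measure, $\nu_{-\infty}$ is Radon because it is finite on each compact $K_{-n}$, with $\nu_{-\infty}(K_{-n})=3^n$. Every compact set of $K_{-\infty}$ lies in some $K_{-N}$ by the local compactness argument above. Full support follows because $\nu$ has full support on $K$, so each $\nu_{-n}$, a scaled push-forward of $\nu$, has full support on $K_{-n}$.

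\textbf{Step 2: the Dirichlet form.} $(\mathcal{E}_{-\infty},\mathcal{F}_{-\infty})$ is a Dirichlet form on $L^2(K_{-\infty},\nu_{-\infty})$ by \cite[Theorem 6.4]{Fukushima.1992.151}. Hence its extended space $\mathcal{F}_{-\infty,e}$ is well defined, as in Definition~\ref{def:Fe}. No regularity or locality is needed, so nothing more is required; Corollary~\ref{cor:Vq_reflexive} then applies to $V$.

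\textbf{Step 3: reading off the conclusions.} Applying Theorem~\ref{thm:main}(i) with $M=K_{-\infty}$, $\mu=\nu_{-\infty}$, $\mathcal{E}=\mathcal{E}_{-\infty}$ gives a unique $u$ with the stated regularity, together with $\Psi(u)\in L^2(0,T;V)$ and the displayed weak formulation (Definition~\ref{def:weaksol}). Theorem~\ref{thm:main}(ii), in its final clause, gives nonnegativity for $u_0\ge0$. Its proof uses the compact exhaustion, which here can be taken explicitly as $\{K_{-n}\}$.
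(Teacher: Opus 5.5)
Your proposal is correct and follows the paper, which obtains this corollary by directly applying Theorem~\ref{thm:main}(i),(ii) with $M=K_{-\infty}$, $\mu=\nu_{-\infty}$ and $V=L^{1+1/m}(K_{-\infty},\nu_{-\infty})\cap\mathcal{F}_{-\infty,e}$; the paper treats the standing hypotheses (compact exhaustion by the $K_{-n}$, Radon measure, Dirichlet form by \cite[Theorem 6.4]{Fukushima.1992.151}) as evident from the construction, whereas you check them explicitly. One small correction to your local-compactness sketch: the junction points of $K_{-n}$ with the rest of $K_{-(n+1)}$ always lie in the relative interior of $K_{-(n+1)}$, and the point that actually needs a case distinction is the corner vertex shared by $K_{-n}$ and $K_{-(n+1)}$, which becomes a junction point once the run of equal indices in $S$ ends, and otherwise (when $S$ is constant from that index on) is a genuine boundary point of $K_{-\infty}$ that still has a neighbourhood contained in $K_{-n}$.
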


\subsection{A heat kernel construction on metric measure spaces}

In this subsection, we consider a setting where the heat kernel is obtained directly from multiresolution analysis, rather than from a pre-existing Dirichlet form. A construction by Cao, Grigor'yan, and Liu \cite{CaoGrigoryanLiu} introduces such a heat kernel on general metric measure spaces satisfying the volume doubling condition, yielding stochastically complete heat kernels with stable-like upper estimates.

Let $(M,d,\mu)$ be a metric measure space satisfying the volume doubling condition:
\[
\mu(B(x,2r)) \le C_D \mu(B(x,r)), \qquad \forall x\in M,\ r>0,
\]
for some constant $C_D>1$. For every $\beta>0$, the construction in \cite{CaoGrigoryanLiu} yields a stochastically complete positive heat kernel $\{p_t\}_{t>0}$ on $M$ satisfying
\[
\int_M p_t(x,y)\,d\mu(y)=1,\qquad \forall t>0,\ \mu\text{-a.e. }x\in M,
\]
and the stable-like upper estimate
\[
0\le p_t(x,y)\le \frac{C}{V(x,t^{1/\beta}+d(x,y))}
\left(\frac{t^{1/\beta}}{t^{1/\beta}+d(x,y)}\right)^\beta,
\qquad \forall t>0,\ x,y\in M,
\]
where $V(x,r):=\mu(B(x,r))$ and $C$ is a constant independent of $t,x,y$.

Let $\{P_t\}_{t\ge0}$ be the associated semigroup on $L^2(M,\mu)$,
\[
P_t f(x):=\int_M p_t(x,y)f(y)\,d\mu(y).
\]
The associated Laplacian $\mathcal{L}$ of $\{P_t\}_{t\ge0}$ gives rise to a Dirichlet form $(\mathcal{E},\mathcal{F})$ by 
\[
\mathcal{E}(f,f):=\lim_{t\downarrow 0}\frac{1}{t}\langle f-P_t f,f\rangle_{L^2(M,\mu)}=\lim_{t\downarrow 0}\frac{1}{2t}\int_M\int_M p_t(x,y)\left(f(x)-f(y)\right)^2\,d\mu\,d\mu,
\quad f\in L^2(M,\mu),
\]
with domain
\[
\mathcal{F}:=\left\{
f\in L^2(M,\mu): \mathcal{E}(f,f)<\infty
\right\}.
\]
This quadratic form is a Dirichlet form on $L^2(M,\mu)$ (see, e.g., \cite[\S 4.2]{GrigoryanHuLau.2003.TAMS2065}); its associated Laplacian is precisely the operator $\mathcal{L}$ in the sense of Dirichlet form theory.

Set
\[
V:=L^{1+1/m}(M,\mu)\cap \mathcal{F}_e,
\]
where $\mathcal{F}_e$ denotes the extended Dirichlet space of $(\mathcal{E},\mathcal{F})$. Applying Theorem~\ref{thm:main}(i),(ii) with this choice of $V$ yields the following.

\begin{corollary}
	For every $T>0$ and every $u_0\in L^{m+1}(M,\mu)$, there exists a unique
	\[
	u\in L^\infty(0,T;L^{m+1})\cap W^{1,2}(0,T;V^*)
	\]
	such that
	\[
	\Psi(u):=|u|^{m-1}u\in L^2(0,T;V),
	\]
	and $u$ satisfies
	\[
	-\int_0^T\int_M u\,\partial_t\varphi\,d\mu\,dt
	+\int_0^T \mathcal{E}(\Psi(u)(t),\varphi(t))\,dt
	= \int_M u_0\,\varphi(0,\cdot)\,d\mu
	\]
	for every $\varphi\in C_c^1([0,T);V)$. Moreover, if $u_0\ge0$ $\mu$-a.e., then $u(t,\cdot)\ge0$ $\mu$-a.e. for a.e. $t\in(0,T)$.
\end{corollary}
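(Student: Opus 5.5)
The corollary is a direct specialization of Theorem~\ref{thm:main}(i),(ii), so the plan is to check its hypotheses and then read off the conclusions. The hypotheses are twofold: $(M,d,\mu)$ must be a metric measure space in the sense of Section~\ref{sec:Intro}, and $(\mathcal{E},\mathcal{F})$ must be a Dirichlet form on $L^2(M,\mu)$ with associated Laplacian $\mathcal{L}$.

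\textbf{Step 1: the space.} I would first confirm that $(M,d,\mu)$ is locally compact and separable, with $\mu$ a Radon measure of full support. These are the standing assumptions for the Cao--Grigor'yan--Liu construction. Volume doubling gives $\mu(B(x,r))>0$ for every ball, hence full support, and it also gives local finiteness. No further geometry is needed.

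\textbf{Step 2: the form.} This is the main point. The symmetric Markovian semigroup $\{P_t\}$ has kernel $p_t\ge 0$ that is symmetric and conservative. I would argue that it is strongly continuous on $L^2$, for which I would use the construction in \cite{CaoGrigoryanLiu}. Since $\{P_t\}$ is symmetric, Markovian and contractive, the form defined by
\[
\mathcal{E}(f,f)=\lim_{t\downarrow0}\tfrac1t\langle f-P_tf,f\rangle
\]
is a closed symmetric form. Its generator is $\mathcal{L}$, and its domain is dense because it contains $D(\mathcal{L})$. The Markov property follows from the identity
\[
\tfrac1{2t}\iint p_t(x,y)(f(x)-f(y))^2\,d\mu\,d\mu
\]
valid for conservative kernels: normal contractions do not increase $(f(x)-f(y))^2$, so they do not increase the approximating quantities or their limit. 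This is precisely \cite[\S4.2]{GrigoryanHuLau.2003.TAMS2065}, which I would cite.

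\textbf{Step 3: apply the theorem.} Since Theorem~\ref{thm:main} requires neither regularity nor locality, no further property of the form needs checking. The extended space $\mathcal{F}_e$ is well defined by Definition~\ref{def:Fe}. Theorem~\ref{thm:main}(i) then gives existence and uniqueness of a weak solution in the stated class with $V=L^{1+1/m}\cap\mathcal{F}_e$, and Theorem~\ref{thm:main}(ii) gives nonnegativity when $u_0\ge 0$. The weak formulation in the corollary is verbatim Definition~\ref{def:weaksol}.

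I expect the only real obstacle to be Step 2, specifically confirming strong continuity of $\{P_t\}$ so that the semigroup-to-form correspondence applies. I would try to take this directly from the construction in \cite{CaoGrigoryanLiu}.
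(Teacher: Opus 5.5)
Your proposal is correct and follows the same route as the paper: the paper also takes the Dirichlet form property of $(\mathcal{E},\mathcal{F})$ from \cite[\S 4.2]{GrigoryanHuLau.2003.TAMS2065} and then reads off the corollary directly from Theorem~\ref{thm:main}(i),(ii) with $V=L^{1+1/m}\cap\mathcal{F}_e$. One small slip is that volume doubling by itself does not give local finiteness of $\mu$, but this does no harm because $\mu$ being a Radon measure of full support is already part of the standing assumptions on $(M,d,\mu)$.
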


\appendix

\section{Proof of Lemma~\protect{\ref{lem:Fe_closed}}}
\label{App:A}

In this section, we prove Lemma~\ref{lem:Fe_closed}. The argument relies on the following diagonal extraction lemma.

\begin{proposition}[Diagonal extraction lemma]
	\label{diagonal}
	Let $\mu$ be a $\sigma$-finite measure on a measurable space $X$, and let $u:X\to\mathbb{R}$ be measurable. Suppose that $\{u_n\}_{n\ge1}$ is a sequence of measurable functions such that $u_n\to u$ $\mu$-a.e. as $n\to\infty$, and that for each $n\ge1$, there exists a sequence $\{v_{n,k}\}_{k\ge1}$ of measurable functions such that $v_{n,k}\to u_n$ $\mu$-a.e. as $k\to\infty$. Then there exists a map $g:\mathbb{N}\to\mathbb{N}$ with $g(n)\to\infty$ as $n\to\infty$ such that
	\[
	v_{n,g(n)}\to u \quad \mu\text{-a.e. as } n\to\infty.
	\]
	Moreover, if $h:\mathbb{N}\to\mathbb{N}$ satisfies $h(n)\ge g(n)$ for all $n$, then $v_{n,h(n)}\to u$ $\mu$-a.e. as well.
\end{proposition}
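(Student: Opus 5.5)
The plan is to reduce to a finite measure and then use Egorov's theorem together with a Borel--Cantelli type argument. By $\sigma$-finiteness, fix a strictly positive integrable function $\rho$ on $X$, and let $\nu:=\rho\,\mu$. Then $\nu$ is a finite measure with the same null sets as $\mu$. It therefore suffices to prove all a.e. statements with respect to $\nu$. Convergence a.e. is unaffected by replacing the functions with $\arctan$ of them, so we may also assume all functions are bounded if convenient; this is not needed below, however.

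\textbf{Choosing $g$.} For each $n$, the convergence $v_{n,k}\to u_n$ holds $\nu$-a.e. and $\nu$ is finite, so Egorov's theorem (equivalently, convergence in measure) applies. It gives an index $g(n)\ge n$ such that
\[
\nu\Bigl(\bigl\{\sup_{k\ge g(n)}|v_{n,k}-u_n|>2^{-n}\bigr\}\Bigr)<2^{-n}.
\]
The point of taking the supremum over all $k\ge g(n)$, rather than using a single $k$, is that it handles the ``moreover'' clause at the same time. Let $E_n$ denote the set on the left. Since $\sum_n\nu(E_n)<\infty$, the Borel--Cantelli lemma shows that $\nu$-a.e. $x$ lies in only finitely many $E_n$.

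\textbf{Convergence.} Take any $h$ with $h(n)\ge g(n)$. Outside the null set where either $u_n\not\to u$ or $x$ lies in infinitely many $E_n$, we have $|v_{n,h(n)}(x)-u_n(x)|\le 2^{-n}$ for all large $n$. We also have $u_n(x)\to u(x)$. By the triangle inequality, $v_{n,h(n)}(x)\to u(x)$. Taking $h=g$ gives the main claim, and $g(n)\ge n\to\infty$ holds by construction.

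\textbf{Main obstacle.} The only delicate point is the application of Egorov's theorem, which requires a finite measure. This is exactly why we pass to $\nu=\rho\mu$. If one prefers to avoid Egorov, the same bound follows by continuity from above: the sets $\{\sup_{k\ge K}|v_{n,k}-u_n|>2^{-n}\}$ decrease as $K$ increases to a $\nu$-null set, and $\nu$ is finite. Measurability of these suprema holds because they are countable suprema of measurable functions.
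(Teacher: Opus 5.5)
Your proof is correct, but it takes a different route from the paper's.

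\textbf{The paper's route.} The paper works directly with $\mu$. It fixes an exhaustion $X_m\uparrow X$ by sets of finite measure and applies Egorov's theorem on each $X_m$, both to $u_n\to u$ and to every sequence $v_{n,k}\to u_n$. It collects the exceptional sets into good sets $C_m$ with $\mu(X_m\setminus C_m)\le 2^{-(m-1)}$. Borel--Cantelli over $m$ then shows that a.e.\ point lies in $C_m$ for all sufficiently large $m$. Finally it sets $g(n)=\max_{1\le l\le n}K(n,l,l)$, built from the uniform-convergence thresholds on $C_l$.

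\textbf{Your route.} You pass once to the equivalent finite measure $\nu=\rho\mu$. You then choose $g(n)$ so that the tail supremum $\sup_{k\ge g(n)}|v_{n,k}-u_n|$ exceeds $2^{-n}$ only on a set of $\nu$-measure less than $2^{-n}$. This ties the tolerance and the exceptional measure to the same index $n$, so a single Borel--Cantelli over $n$ finishes the argument.

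\textbf{What each buys.} Your version gives several simplifications:
\begin{itemize}
\item You need only pointwise a.e.\ convergence of $u_n\to u$. There is no Egorov step for it and no double-indexed exceptional sets.
\item The supremum over $k\ge g(n)$ gives the ``moreover'' clause for free.
\item The condition $g(n)\ge n$ makes $g(n)\to\infty$ explicit. The statement requires this, and the paper's choice of $g$ does not visibly enforce it unless one also takes $K(n,m,l)\ge n$.
\end{itemize}
In exchange, the paper's route avoids introducing the density $\rho$ and produces explicit sets $C_m$ on which all the convergences are uniform. (The existence of $\rho$ is standard, e.g.\ $\rho=\sum_j 2^{-j}(1+\mu(X_j))^{-1}\mathbf{1}_{X_j}$ for a disjoint finite-measure partition.)

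\textbf{One wording point.} Plain convergence in measure of $v_{n,k}$ to $u_n$ would not control the supremum over all $k\ge g(n)$. What you actually use is almost uniform convergence (Egorov), or equivalently the continuity-from-above argument you give at the end. That argument is the right justification.
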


\begin{proof}
	Since $\mu$ is $\sigma$-finite, there exists an increasing sequence of measurable sets $\{X_m\}_{m\ge1}$ such that $X_m\uparrow X$ and $\mu(X_m)<\infty$ for each $m\ge1$.
	
	For each $m\ge1$ and $n\ge1$, by Egorov's theorem applied on the finite-measure set $X_m$, there exist measurable sets $A_m$ and $B_{m,n}$ such that
	\[
	\mu(A_m)<2^{-m},\qquad \mu(B_{m,n})<2^{-(m+n)},
	\]
	and
	\[
	u_n\to u \text{ uniformly on } X_m\setminus A_m,
	\qquad
	v_{n,k}\to u_n \text{ uniformly on } X_m\setminus B_{m,n}.
	\]
	Define the good sets
	\[
	C_m:=X_m\setminus\left(A_m\cup\bigcup_{n=1}^\infty B_{m,n}\right),\qquad m\ge1.
	\]
	On each $C_m$, we have uniform convergence $u_n\to u$, and for each fixed $n$, uniform convergence $v_{n,k}\to u_n$ as $k\to\infty$.
	
	Set
	\[
	C:=\bigcup_{n=1}^\infty \bigcap_{m=n}^\infty C_m.
	\]
	We claim that $\mu(X\setminus C)=0$. Indeed,
	\[
	X\setminus C\subset \bigcap_{n=1}^\infty \bigcup_{m=n}^\infty (X_m\setminus C_m),
	\]
	and
	\[
	\sum_{m=1}^\infty \mu(X_m\setminus C_m)
	\le \sum_{m=1}^\infty \left(\mu(A_m)+\sum_{n=1}^\infty \mu(B_{m,n})\right)
	\le \sum_{m=1}^\infty \left(2^{-m}+\sum_{n=1}^\infty 2^{-(m+n)}\right)
	= \sum_{m=1}^\infty 2^{-(m-1)}
	=2<\infty.
	\]
	Thus $\mu(X\setminus C)=0$ by the Borel--Cantelli lemma.
	
	We now construct $g$. For each fixed $n$, since $v_{n,k}\to u_n$ uniformly on each $C_m$, and since $C$ is covered by the nested sequence $\{\bigcap_{m=n}^\infty C_m\}_{n\ge1}$, for every $x\in C$ we may choose $K(n,m,l)$ such that
	\[
	\sup_{y\in C_m}|v_{n,k}(y)-u_n(y)|<\frac1l \quad\text{whenever } k\ge K(n,m,l).
	\]
	Also, since $u_n\to u$ uniformly on each $C_m$, for each $l$ there exists $n(m,l)$ such that
	\[
	\sup_{y\in C_m}|u_n(y)-u(y)|<\frac1l \quad\text{whenever } n\ge n(m,l).
	\]
	
	Define
	\[
	g(n):=\max_{1\le l\le n} K(n,l,l).
	\]
	We show that $v_{n,g(n)}\to u$ pointwise on $C$. Fix $x\in C$ and $\varepsilon>0$. Since $x\in C$, there exists $a:=a(x)\in\mathbb{N}_+$ such that $x\in C_m$ for all $m\ge a$. Choose $l\in\mathbb{N}$ such that $l\ge a$ and $1/l<\varepsilon/2$. Then, for any $n\ge l\vee n(l,l)$, we have
	\[
	|u_n(x)-u(x)|\le \sup_{y\in C_l}|u_n(y)-u(y)|<\frac1l<\frac{\varepsilon}{2},
	\]
	and, since $g(n)\ge K(n,l,l)$,
	\[
	|v_{n,g(n)}(x)-u_n(x)|\le \sup_{y\in C_l}|v_{n,g(n)}(y)-u_n(y)|<\frac1l<\frac{\varepsilon}{2}.
	\]
	Therefore $|v_{n,g(n)}(x)-u(x)|<\varepsilon$ for all sufficiently large $n\ge l\vee n(l,l)$. Thus $v_{n,g(n)}\to u$ pointwise on $C$, hence $\mu$-a.e.
	
	The final assertion for any $h\ge g$ follows by the same argument, since $h(n)\ge g(n)\ge K(n,l,l)$ for all sufficiently large $n$.
\end{proof}

We now prove Lemma~\ref{lem:Fe_closed} using the diagonal extraction lemma.

\begin{proof}[Proof of Lemma~\ref{lem:Fe_closed}]
	Let $\{v_n\}_{n\ge1}\subset\mathcal{F}_e$ be an $\mathcal{E}$-Cauchy sequence such that $v_n\to v$ $\mu$-a.e. For each $n\ge1$, by definition of $\mathcal{F}_e$, there exists an $\mathcal{E}$-Cauchy sequence $\{v_{n,k}\}_{k\ge1}\subset\mathcal{F}$ such that $v_{n,k}\to v_n$ $\mu$-a.e. as $k\to\infty$.
	
	We recall that, under our standing assumption that $(M,d)$ is locally compact and separable and $\mu$ is a Radon measure, $\mu$ is $\sigma$-finite. This justifies the use of the diagonal extraction argument below.
	
	For every $n\ge1$, applying Proposition~\ref{diagonal} to the sequence $\{v_{n,k}\}$ yields a map $g:\mathbb{N}\to\mathbb{N}$ with $g(n)\to\infty$ such that
	\[
	v_{n,g(n)}\to v \quad \mu\text{-a.e. as } n\to\infty.
	\]
	On the other hand, since for each fixed $n$ we have $v_{n,k}\to v_n$ in the $\mathcal{E}$-sense as $k\to\infty$ by definition of $\mathcal{F}_e$, there exists a map $h:\mathbb{N}\to\mathbb{N}$ such that
	\[
	\mathcal{E}(v_{n,h(n)}-v_n)<\frac1{n^2}\quad\text{for all } n\ge1.
	\]
	Now define
	\[
	f(n):=\max\{g(n),h(n)\}\qquad(n\ge1).
	\]
	Then $f(n)\ge g(n)$ for all $n$, so by the final assertion of Proposition~\ref{diagonal}, we still have
	\[
	v_{n,f(n)}\to v \quad \mu\text{-a.e. as } n\to\infty.
	\]
	Moreover, since $f(n)\ge h(n)$ for all $n$, we also have
	\[
	\mathcal{E}(v_{n,f(n)}-v_n)<\frac1{n^2}\quad\text{for all } n\ge1.
	\]
	Thus $f$ simultaneously controls both the $\mu$-a.e. convergence and the energy error.
	
	Finally, by the triangle inequality for $\sqrt{\mathcal{E}}$,
	\[
	\sqrt{\mathcal{E}(v_n-v)}
	\le \sqrt{\mathcal{E}(v_n-v_{n,f(n)})}+\sqrt{\mathcal{E}(v_{n,f(n)}-v)}
	\le \frac1n+\sqrt{\mathcal{E}(v_{n,f(n)}-v)}.
	\]
	Letting $n\to\infty$, the right-hand side tends to $0$; hence $\mathcal{E}(v_n-v)\to0$ as $n\to\infty$. This proves $v\in\mathcal{F}_e$ and the desired convergence. The proof is complete.
\end{proof}

\begin{acknowledgement}
	The author thanks Meng Yang for valuable discussions on the extended Dirichlet space, which were instrumental in developing the functional framework of this work.
\end{acknowledgement}

\bibliographystyle{siam}
\bibliography{ref}

\end{document}